\documentclass[a4paper,final]{article}
\usepackage[british]{babel}
\pdfminorversion=7
\pdfobjcompresslevel=5


\usepackage{amsmath, amssymb, amsthm, amsfonts}
\usepackage{ifdraft}
\usepackage{microtype}
\usepackage{url}
\DeclareMathAlphabet{\mathpzc}{OT1}{pzc}{m}{it}
\usepackage[utf8]{inputenc}
\usepackage{aliascnt}
\usepackage[colorlinks=true,linkcolor=blue,draft=false]{hyperref}
\usepackage{tikz}
\usetikzlibrary{arrows,automata}
\usepackage{mathtools}
\usepackage{enumerate}
\usepackage[all]{xy}
\usepackage{paralist}
\usepackage{extarrows}


\usepackage[notref,notcite,color]{showkeys}

\ifdraft{
\newcommand{\colorS}[1]{{\color{blue} #1}}
\newcommand{\colorV}[1]{{\color{red} #1}}
\newcommand{\itemS}[1]{\colorS{\item[S:] #1}}
\newcommand{\itemV}[1]{\colorV{\item[V:] #1}}
\newcommand{\commS}[1]{{\marginpar{\tiny \colorS{#1}}}}
\newcommand{\commV}[1]{{\marginpar{\tiny \colorV{#1}}}}
\newcommand{\comment}[1]{{\color{gray} #1}}
}
{
\newcommand{\colorS}[1]{#1}
\newcommand{\colorV}[1]{#1}
\newcommand{\itemS}[1]{}
\newcommand{\itemV}[1]{}
\newcommand{\commS}[1]{}
\newcommand{\commV}[1]{}
\newcommand{\comment}[1]{}
}

\makeatletter
\def\myfooter{\xdef\@thefnmark{}\@footnotetext}
\makeatother

\title{On the penetration distance in Garside monoids}
\author{\colorV{Volker Gebhardt}, \colorS{Stephen Tawn}}
\date{16th January 2016}

\hypersetup{pdftitle={On the penetration distance in Garside monoids},
            pdfauthor={Volker Gebhardt, Stephen Tawn},
            pdfkeywords={},
            pdfsubject={},
            pdfdisplaydoctitle={true}}


\newcommand{\NN}{\mathbb{N}}
\newcommand{\ZZ}{\mathbb{Z}}
\newcommand{\RR}{\mathbb{R}}

\newcommand{\Artin}[1]{\mathsf{#1}}

\newcommand{\st}{\boldsymbol{\mid}}

\newcommand{\Union}{\bigcup}
\newcommand{\intersect}{\cap}

\let\emptyset\varnothing

\def\setminus{\mathbin{\raisebox{0.25ex}{$\smallsetminus$}}}
\let\le\leqslant

\let\ge\geqslant

\newcommand{\from}{\colon\!}
\newcommand{\join}{\vee}
\renewcommand{\Join}{\bigvee}
\newcommand{\meet}{\wedge}

\newcommand{\rightJoin}{\mathop{\widetilde{\Join}}}
\let\epsilon\varepsilon
\newcommand{\prefix}{\preccurlyeq}
\newcommand{\suffix}{\succcurlyeq}
\newcommand{\under}{\backslash}
\DeclareMathOperator{\rev}{rev}
\newcommand{\ldot}{\mathbin{.}}

\DeclareMathOperator{\Div}{Div}
\newcommand{\Ess}{\mathpzc{Ess}}
\newcommand{\Rig}{\mathpzc{Rig}}

\newcommand{\pd}{\mathrm{pd}}
\newcommand{\Atoms}{\mathcal{A}}
\newcommand{\Simples}{\mathcal{D}}
\newcommand{\pSimples}{\Simples^{\!{}^{\circ}\!}}
\newcommand{\Acceptor}{\Gamma}
\newcommand{\PSeqAcceptor}{\Pi}
\newcommand{\cl}{\mathrm{cl}}
\renewcommand{\complement}{\partial}
\newcommand{\rightcomplement}{\widetilde{\complement}}
\newcommand{\id}{\mathbf{1}}
\newcommand{\PSeq}{\textsc{PSeq}}

\newcommand{\Start}{S}
\newcommand{\Fin}{F}
\newcommand{\NF}{\mathrm{NF}}
\newcommand{\ZS}{Zappa--Sz{\'e}p}

\let\zs=\bowtie

\newcommand{\Expect}{\mathbb{E}}
\newcommand{\calL}{\mathcal{L}}

\newcommand{\calLbar}{\overline{\mathcal{L}}}



\theoremstyle{plain}
\newtheorem{theorem}{Theorem}

\newaliascnt{lemma}{theorem}
\newtheorem{lemma}[lemma]{Lemma}
\aliascntresetthe{lemma}

\newaliascnt{proposition}{theorem}
\newtheorem{proposition}[proposition]{Proposition}
\aliascntresetthe{proposition}

\newaliascnt{corollary}{theorem}
\newtheorem{corollary}[corollary]{Corollary}
\aliascntresetthe{corollary}

\newaliascnt{conjecture}{theorem}

\aliascntresetthe{conjecture}

\theoremstyle{remark}

\newaliascnt{claim}{theorem}

\aliascntresetthe{claim}

\newtheorem*{claim*}{Claim}
\newtheorem*{remark}{Remark}

\theoremstyle{definition}

\newaliascnt{definition}{theorem}
\newtheorem{definition}[definition]{Definition}
\aliascntresetthe{definition}

\newaliascnt{example}{theorem}
\newtheorem{example}[example]{Example}
\aliascntresetthe{example}

\newaliascnt{notation}{theorem}
\newtheorem{notation}[notation]{Notation}
\aliascntresetthe{notation}

%
%

\addto\extrasbritish{

}



\makeatletter
\def\MR@url=#1 #2={http://www.ams.org/mathscinet-getitem?mr=#1}

\makeatother


%



\begin{document}

\maketitle
\myfooter{Both authors acknowledge support under Australian Research Council's Discovery Projects funding scheme (project number DP1094072). Volker Gebhardt acknowledges support under the Government of Spain Projects MTM2010-19355 and MTM2013-44233-P.}
\myfooter{MSC-class: 20F36, 43A07 (Primary) 20F69, 60B15 (Secondary)}
\myfooter{Keywords: Garside monoids; Artin monoids; random elements; normal forms; regular language; penetration distance; essential transitivity}

\begin{abstract}
We prove that the exponential growth rate of the regular language of
penetration sequences is smaller than the growth rate of the regular language
of normal form words, if the acceptor of the regular language of normal form
words is strongly connected.
Moreover, we show that the latter property is satisfied for all irreducible
Artin monoids of spherical type, extending a result by Caruso.

   Our results establish that the expected value of the penetration distance
$pd(x,y)$ in an irreducible Artin monoid of spherical type is bounded
independently of the length of $x$, if $x$ is chosen uniformly among all
elements of given canonical length and $y$ is chosen uniformly among all
atoms; the latter in particular explains observations made by Thurston
in the context of the braid group, and it shows that all irreducible Artin
monoids of spherical type exhibit an analogous behaviour.
Our results also give an affirmative answer to a question posed by Dehornoy.
\end{abstract}

\section{Introduction}

Random walks on discrete infinite groups are a subject that has received substantial interest; see for instance~\cite{KaimanovichVershik,Woess2000} and the references therein.

Random walks on the 3-strand braid group were analysed in~\cite{MairesseMatheus}.
A complete understanding of random walks on braid groups on a larger number of strands, let alone more general classes of groups such as Artin--Tits groups or Garside groups, has not been achieved yet.%
\medskip

A topic closely related to random walks on Garside groups is the behaviour of the Garside normal forms of random elements:
Fixing a position in the normal form, one obtains an induced distribution on the set of simple elements (that is, on the symmetric group in the case of braids) which can be studied.
In~\cite{GT13}, we investigated these induced distributions and observed convergence.
More precisely, experimental data suggests that, except for an initial and a final region whose lengths are uniformly bounded, the distributions of the factors of the normal form of sufficiently long random braids depend neither on the position in the normal form nor on the lengths of the random braids.%
\medskip

Stabilisation phenomena in the normal forms of random braids were first observed by Thurston in~\cite[Chapter~9]{braid_epsteinetal}:
A remark at the end of the chapter mentions that in experiments with a small number of strands, the multiplication of a braid~$x$ in normal form by a simple braid appeared unlikely to affect all factors of the normal form of~$x$; moreover, if one ignores factors that are modified only by conjugation by the Garside element (the half-twist in the case of the braid group), the number of factors that are modified appeared to be ``roughly constant''.
Based on these observations, an alternative algorithm for computing the normal form of a braid was suggested, but the question of proving its efficiency was left open.%
\medskip

In~\cite{GT13}, we formalised analysing the number of factors in the normal form of an element~$x$ of a Garside monoid~$M$ that are modified non-trivially (that is, other than by conjugation by the Garside element) when computing the normal form of the product~$x\cdot y$; we called this number the \emph{penetration distance} $\pd(x,y)$:
%
%
For specific distributions of~$x$ and~$y$, we described the behaviour of the expected value $\Expect[\pd]$ of the penetration distance in terms of the growth rates~$\alpha$ and~$\beta$ of two regular languages (cf.\ \autoref{S:BackgroundPenetration}).
More precisely, we proved that, if $x$ is chosen uniformly among all elements of canonical length~$k$ and~$y$ is chosen uniformly among the atoms, then $\Expect[\pd]$ is bounded above independently of the value of~$k$ if $\alpha<\beta$ holds~\cite[Theorem~4.7]{GT13}.
By explicit calculations, we showed that the latter condition is satisfied for some irreducible Artin monoids of spherical type with a small number of atoms.
\bigskip

In the present paper, we relate the condition $\alpha<\beta$ to certain structural properties of the lattice of simple elements (more precisely, to the connectedness of the acceptor of the regular language of normal forms), and we prove that these properties are satisfied for all irreducible Artin monoids of spherical type.

As a consequence, we establish here that the expected value of the penetration distance is bounded in the sense of~\cite[Theorem~4.7]{GT13} for all irreducible Artin monoids of spherical type.
Thus, the results obtained in this paper in particular explain the behaviour that was observed by Thurston in~\cite[Chapter~9]{braid_epsteinetal} in the case of the braid monoid, and they show that all irreducible Artin monoids of spherical type exhibit an analogous behaviour.
Moreover, our results imply that in all irreducible Artin monoids of spherical type, the number of normal form sequences of length~$k$ grows as fast as the number of normal form sequences of length~$k$ that start with a given proper simple element, answering in the affirmative a question posed by Dehornoy in the context of the braid monoid~\cite{DehornoyJCTA07}.

We also prove that the expected value of the penetration distance is unbounded for \ZS{} products of 
irreducible Artin monoids of spherical type.%
\bigskip

The structure of the paper is as follows.
In \autoref{S:BackgroundGarside}, we recall the basic concepts of normal forms in Garside groups.
In \autoref{S:BackgroundArtin}, we briefly recall Artin monoids of spherical type; experts may skip this section.
In \autoref{S:BackgroundPenetration}, we recall the regular languages defined in~\cite{GT13} to study the expected value of the penetration distance.
\autoref{S:BackgroundZappaSzep} recalls the notion of \ZS{} products.
In \autoref{S:Essential} we define the notions of \emph{essential} simple elements and \emph{essential transitivity}, which will be used in \autoref{S:GrowthRates} to compare the growth rates~$\alpha$ and~$\beta$.
\autoref{S:ArtinMonoids} establishes that irreducible Artin monoids of spherical type are essentially transitive and have the property that all proper simple elements are essential, and thus that the results of the preceding sections can be applied to this class of monoids.
\medskip

We thank the Institute for Mathematics at the University of Seville (IMUS) for providing access to a 64-node 512\,GB RAM computer; this equipment was used for some of the computations mentioned in \autoref{S:ArtinMonoids}.

\section{Background}\label{S:Background}

In order to fix notation, we briefly recall the main concepts used in the paper.

\subsection{Garside monoids and Garside normal form}\label{S:BackgroundGarside}

We refer to \cite{DehornoyParis99,Dehornoy02,GarsideBook} for details.

Let $M$ be a monoid with identity element~$\id$.
The monoid $M$ is called \emph{left-cancellative} if for any $x,y,y'$ in $M$, the equality $xy = xy'$ implies $y = y'$.
Similarly, $M$ is called \emph{right-cancellative} if for any $x,y,y'$ in $M$, the equality $yx = y'x$ implies $y = y'$.
The monoid~$M$ is called \emph{cancellative} if it is both left-cancellative and right-cancellative.

For $x,y\in M$, we say that~$x$ is a \emph{left-divisor} or \emph{prefix} of~$y$, writing $x \prefix_M y$, if there exists an element $u\in M$ such that $y = xu$.  If the monoid is obvious, we simply write $x \prefix y$ to reduce clutter.
Similarly, we say that~$x$ is a \emph{right-divisor} or \emph{suffix} of~$y$,
writing $y\suffix_M x$ or $y\suffix x$, if there exists $u\in M$ such that $y = ux$.
If $M$ does not contain any non-trivial invertible elements, then the relation $\prefix$ is a partial order if $M$ is left-cancellative, and the relation $\suffix$ is a partial order if $M$ is right-cancellative.

An element $a\in M\setminus\{\id\}$ is called an \emph{atom} if whenever $a = uv$ for $u,v\in M$, either $u = \id$ or $v = \id$ holds.  The existence of atoms implies that $M$ does not contain any non-trivial invertible elements.
The monoid $M$ is said to be
\emph{atomic} if it is generated by its set $\Atoms$ of atoms and if for every
element $x\in M$ there is an upper bound on the length of
decompositions of $x$ as a product of atoms, that is, if
$||x||_{\Atoms} := \sup\{ k\in\NN : x=a_1\cdots a_k \text{ with }a_1,\ldots,a_k\in\Atoms \} < \infty$.

An element $d\in M$ is called \emph{balanced}, if the set of its left-divisors is equal to the set of its right-divisors.  In this case, we write $\Div(d)$ for the set of (left- and right-) divisors of $d$.

\begin{definition}
  A \emph{quasi-Garside structure} is a pair $(M, \Delta)$ where $M$
  is a monoid and $\Delta$ is an element of $M$ such that
  \begin{enumerate}[ (a)] \itemsep 0em \vspace{-0.25\topskip}
  \item $M$ is cancellative and atomic,
  \item the prefix and suffix relations are lattice orders, that is, for any pair of elements there exist a unique least common upper bound and a unique greatest common lower bound with respect to $\prefix$ respectively $\suffix$,
  \item $\Delta$ is balanced and $M$ is generated by $\Div(\Delta)$.
  \end{enumerate}
  The element $\Delta$ is called a \emph{quasi-Garside element} for~$M$.
  
  If $\Div(\Delta)$ is finite then we say that $(M, \Delta)$ is a \emph{Garside structure} and call~$\Delta$ a \emph{Garside element} for~$M$.
\end{definition}

\begin{remark}
  If $(M,\Delta)$ is a quasi-Garside structure in the above sense, then in the terminology of \cite{GarsideBook}, the set $\Div(\Delta)$ forms a bounded Garside family for the monoid $M$.
\end{remark}

\begin{definition}
  A \emph{(quasi-)Garside monoid} is a cancellative monoid~$M$ together with a fixed (quasi-)Garside structure
  $(M, \Delta)$.
  The elements of $\Div(\Delta)$ are called the \emph{simple elements} of the (quasi-)Garside monoid~$M$.
\end{definition}

\begin{remark}
  If a cancellative monoid admits one (quasi-)Garside structure, it admits infinitely many; cf.\ \autoref{D:framing}.
  Indeed, for the monoid $\NN^n$, any element $(x_1,\ldots,x_n)\in (\NN_{>0})^n$ can be chosen as the Garside element.
  
  The notion of simple elements, and thus the normal form of an element, depends on the specific (quasi-)Garside element, so different (quasi-)Garside structures for the same monoid must be distinguished.
\end{remark}

\begin{notation}
If $M$ is a left-cancellative atomic monoid, then least common upper bounds and greatest common lower bounds are unique if they exist.  In this situation, we will write $x \join y$ for the $\prefix$-least common upper bound of $x,y\in M$ if it exists, and we write $x \meet y$ for their $\prefix$-greatest common lower bound if it exists.
If $x,y\in M$ admit a $\prefix$-least common upper bound, we define $x\under y$ as the unique element of $M$ satisfying $x(x\under y)=x\join y$.
%

If $M$ is a (quasi-)Garside monoid with (quasi-)Garside element $\Delta$, we write~$\Atoms_M$ for the set of atoms,~$\Simples_M$ for the set of simple elements~$\Div(\Delta)$, and we define the set of \emph{proper simple elements} as $\pSimples_M = \Simples_M \setminus \{ \id, \Delta \}$.  To avoid clutter, we will usually drop the subscript if there is no danger of confusion.

For any $x\in\Simples$, there are unique elements $\complement x=\complement_M x\in\Simples$ and $\rightcomplement x = \rightcomplement_M x\in \Simples$ such that $x \complement x=\Delta = (\rightcomplement x) x$.
We define inductively $\complement^{k+1} x = \complement(\complement^k x)$ and $\rightcomplement^{k+1} x = \rightcomplement(\rightcomplement^k x)$ for $k\in\NN$.
As $\complement (\rightcomplement x) = x = \rightcomplement(\complement x)$ for any $x\in\Simples$, we can define $\complement^k x = \rightcomplement^{-k} x$ for any $k\in\ZZ$.
Clearly, $\complement^k x\in\pSimples$ iff $x\in\pSimples$.
Moreover, for any $x,y\in\Simples$, one has $x\prefix y$ iff $\complement x \suffix \complement y$ iff $\complement^2 x \prefix \complement^2 y$.

For $x \in M$ the \emph{starting set}~$\Start(x)$ of~$x$ is the set of atoms that are prefixes of~$x$.  Similarly, the \emph{finishing set}~$\Fin(x)$~of~$x$ is the set of atomic suffixes of~$x$:
\[
  \Start(x) = \{ a \in \Atoms : a \prefix x \} \quad\quad\quad
  \Fin(x) = \{ a \in \Atoms : x \suffix a \}
\]

Given a set $X$ we will write $X^* = \Union_{i = 0}^{\infty} X^i$ for
the set of strings (of finite length) of elements of $X$.  We will write $\epsilon$ for
the empty string and separate the letters of a string with dots, for
example we will write $a \ldot b \ldot a \in \{a,b\}^*$.

Given a (quasi)-Garside monoid~$M$ with (quasi-)Garside element~$\Delta$, we can define the \emph{left
normal form} of an element by repeatedly extracting the
$\prefix$-greatest common lower bound of the element and $\Delta$.  More precisely, the
normal form of $x \in M$ is the unique word $\NF(x) = x_1 \ldot x_2
\ldot \cdots \ldot x_\ell$ in $(\Simples\setminus\{\id\})^*$ such that $x = x_1 x_2 \cdots
x_\ell$ and $x_i = \Delta \meet x_i x_{i+1} \cdots x_\ell$ for $i=1,\ldots,\ell$, or equivalently, $\complement x_{i-1} \meet x_i=\id$ for $i=2,\ldots,\ell$.
We write $x_1|x_2|\cdots|x_\ell$ for the word $x_1 \ldot x_2 \ldot \cdots \ldot x_\ell$
together with the proposition that this word is in normal form.

If $x_1|x_2|\cdots|x_\ell$ is the normal form of $x\in M$, we define the \emph{infimum} of $x$ as
$\inf(x) = \max\{ i\in\{1,\ldots,\ell\} : x_i = \Delta \}$, the \emph{supremum} of $x$ as $\sup(x) = \ell$, and the \emph{canonical length} of $x$ as $\cl(x) = \sup(x)-\inf(x)$.  Note that $\inf(x)$ is the largest integer~$i$ such that $\Delta^i\prefix x$ holds, and $\sup(x)$ is the smallest integer~$i$ such that $x\prefix \Delta^i$ holds.

The operation $\complement$ can be extended to all of $M$ by defining $\complement x$ to be the unique element such that $x \complement x = \Delta^{\sup(x)}$.
If $\NF(x) = x_1|x_2 \cdots|x_\ell$ is the normal form of~$x$ and $\inf(x)=k$, then
$\NF(\complement x) = \complement x_\ell| \complement^3 x_{\ell-1}| \cdots |\complement^{2(\ell-k)-1} x_{k+1}$.

Let $\calL=\calL_M$ be the language on the set $\pSimples$ of proper simple elements
consisting of all words (of finite length) in normal form, and write $\calL^{(n)}=\calL_M^{(n)}$ for
the subset consisting of words of length $n$:
\[
  \calL := \bigcup_{n\in\NN} \calL^{(n)} \quad\text{where}\quad
  \calL^{(n)} := \big\{ 
      x_1 \ldot \cdots \ldot x_n \in \big(\pSimples\big)^* :
      \forall i,\ \complement x_i \meet x_{i+1} = \id 
    \big\}
\]

We also define $\calLbar := \calLbar_M := \bigcup_{n\in\NN} \calLbar^{(n)}$, where
\[
  \calLbar^{(n)} := \calLbar_M^{(n)} := \big\{ 
      x_1 \ldot \cdots \ldot x_n \in (\Simples\setminus\{\id\})^* :
      \forall i,\ \complement x_i \meet x_{i+1} = \id 
    \big\} \;.
\]
\end{notation}

\begin{definition}\label{D:framing}
Given a (quasi-)Garside monoid~$M$ with (quasi-)Garside element~$\Delta$ and an integer~$k\ge1$, let~$M(k)$
denote the same monoid but with the (quasi-)Garside structure given by the
(quasi-)Garside element~$\Delta^k$.
We call~$M(k)$ the \emph{$k$-framing} of~$M$.
\end{definition}

As the partial orders $\prefix$ and $\suffix$ on $M$ and $M(k)$ are identical, so are the lattice operations $\join$, $\meet$ and $\under$.  It is obvious from the definitions that one has
$\complement_{M(k)} x = \complement_M x \Delta^m$, where $m = k\big\lceil\frac{\sup(x)}{k}\big\rceil-\sup(x)$.

\begin{lemma} \label{lemma:non-minimal-normal-form}
  If~$M$ is a Garside monoid and~$w$ is a word in $M(k)$-normal form, then replacing each letter of~$w$
  with the word for its $M$-normal form yields a word in $M$-normal form.
\end{lemma}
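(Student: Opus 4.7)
The plan is to reduce the claim to checking the normal form condition at each boundary created by the substitution. Write $w = y_1 | y_2 | \cdots | y_\ell$ in $M(k)$-normal form, so each $y_i \in \Div(\Delta^k) \setminus \{\id\}$ and $\complement_{M(k)} y_i \meet y_{i+1} = \id$ for $i = 1, \ldots, \ell-1$. For each $i$, write $\NF_M(y_i) = x_1^{(i)} | \cdots | x_{\ell_i}^{(i)}$; within each block the normal form condition is automatic. Hence it suffices to prove
\[
  \complement_M x_{\ell_i}^{(i)} \meet x_1^{(i+1)} = \id
\]
in $M$ for every $i < \ell$.

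The argument is driven by the identity $\complement_{M(k)} x = \complement_M x \cdot \Delta^m$, where $m = k\big\lceil \sup(x)/k\big\rceil - \sup(x)$, stated immediately after \autoref{D:framing}. Applied to $y_i$, a non-trivial divisor of $\Delta^k$, we have $1 \le \sup_M(y_i) \le k$ and so $\complement_{M(k)} y_i = \complement_M y_i \cdot \Delta^{k - \sup_M(y_i)}$. In particular $\complement_M y_i \prefix_M \complement_{M(k)} y_i$; since the meet is monotonic in each argument, this gives
\[
  \complement_M y_i \meet y_{i+1} \;\prefix_M\; \complement_{M(k)} y_i \meet y_{i+1} \;=\; \id,
\]
so $\complement_M y_i \meet y_{i+1} = \id$.

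To pass from $y_i,y_{i+1}$ to the boundary letters, recall from the preceding notation that the first letter of $\NF_M(\complement_M y_i)$ is $\complement_M x_{\ell_i}^{(i)}$, so $\complement_M x_{\ell_i}^{(i)} \prefix_M \complement_M y_i$, while $x_1^{(i+1)} \prefix_M y_{i+1}$ is immediate. Monotonicity of $\meet$ once more yields
\[
  \complement_M x_{\ell_i}^{(i)} \meet x_1^{(i+1)} \;\prefix_M\; \complement_M y_i \meet y_{i+1} \;=\; \id,
\]
which forces the required equality. There is no real obstacle: once the identity relating $\complement_{M(k)}$ to $\complement_M$ is in hand, the whole proof is a short chain of divisibilities, the substantive point being that appending the extra $\Delta^{k - \sup_M(y_i)}$ factor to $\complement_M y_i$ can only make the meet with $y_{i+1}$ larger, so triviality of the $M(k)$-meet transfers at once to the $M$-meet.
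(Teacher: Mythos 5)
Your proof is correct and follows essentially the same route as the paper's: both reduce to a boundary check between consecutive blocks and exploit the chain $\complement_M(\text{last $M$-factor of }y_i) \prefix_M \complement_M y_i \prefix_M \complement_{M(k)} y_i$ together with $x_1^{(i+1)} \prefix_M y_{i+1}$ to push the triviality of the $M(k)$-meet down to the boundary letters. The paper phrases it as a single chain of prefix relations for a named element $m$, while you split it into two applications of monotonicity of $\meet$, but the substance is identical.
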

\begin{proof}
  Suppose that we have $M(k)$-simple elements $x, y \in \Simples_{M(k)}$ for which $x|y$ holds in $M(k)$, and whose $M$-normal forms are as follows:
  \begin{align*}
    \NF_M(x) &= x_1 |x_2 |\cdots |x_k \\
    \NF_M(y) &= y_1 |y_2 |\cdots |y_k
  \end{align*}
  It is sufficient to prove that $x_k | y_1$ in the original Garside structure of $M$.

  Let $m = \complement_M x_k \meet y_1$.  We have that $m \prefix y_1
  \prefix y$ and $m \prefix \complement_M x_k \prefix
  \complement_{M(k)} x$.  Hence $m \prefix \complement_{M(k)} x \meet
  y = \id$ and so $\complement_M x_k \meet y_1 = \id$ as required.
\end{proof}

\begin{remark}
The map from \autoref{lemma:non-minimal-normal-form} does not take $\calL_{M(k)}$ to $\calL_M$, as a proper
simple element of $M(k)$ may have non-zero infimum in $M$.
\end{remark}

\begin{lemma} \label{lemma:normal-form-non-minimal}
Given a Garside monoid~$M$ and a word $x_0 | x_1 |\cdots| x_\ell$ in $M$-normal form, let $x_j=\id$ for $j>\ell$ and let $y_i = x_{ik} x_{ik + 1} \cdots x_{(i+1)k - 1}$ for $i=0,\ldots,\left\lfloor\frac{\ell}{k}\right\rfloor$.

Then $y_0 \ldot y_1 \ldot\cdots \ldot y_{\left\lfloor\frac{\ell}{k}\right\rfloor}$ is in $M(k)$-normal form.
\end{lemma}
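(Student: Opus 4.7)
The plan is to verify the two requirements for $y_0 \ldot y_1 \ldot \cdots \ldot y_{\lfloor \ell/k \rfloor}$ to be in $M(k)$-normal form: first, that each $y_i$ belongs to $\Simples_{M(k)}$, i.e.\ $y_i \prefix \Delta^k$ (so that $\complement_{M(k)} y_i$ is defined); and second, that the normal form condition $\complement_{M(k)} y_i \meet y_{i+1} = \id$ holds for every consecutive pair.

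For the first requirement I would observe that $y_i$ is the product of $k$ $M$-simple factors whose consecutive pairs are in $M$-normal form (inherited from the original, padded word). Hence $x_{ik} | x_{ik+1} | \cdots | x_{(i+1)k-1}$ is the $M$-normal form of $y_i$, giving $\sup_M(y_i) \le k$ and therefore $y_i \prefix \Delta^k$.

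For the second requirement I would argue by contradiction. Fix $i < \lfloor \ell/k \rfloor$ and suppose some atom $a \in \Atoms$ satisfies $a \prefix \complement_{M(k)} y_i$ and $a \prefix y_{i+1}$. Since atoms are simple, the latter forces $a \prefix \Delta \meet y_{i+1}$, and by the uniqueness of normal forms $\Delta \meet y_{i+1} = x_{(i+1)k}$, the first factor of the $M$-normal form of $y_{i+1}$; thus $a \prefix x_{(i+1)k}$. From the normal form assumption on the original word, $\complement_M x_{(i+1)k-1} \meet x_{(i+1)k} = \id$, so $a \not\prefix \complement_M x_{(i+1)k-1}$; equivalently, the pair $x_{(i+1)k-1} | a$ is itself in $M$-normal form. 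Because $i < \lfloor \ell/k \rfloor$, the block $y_i$ consists of $k$ non-identity factors already in $M$-normal form, so appending $a$ produces the word $x_{ik} | \cdots | x_{(i+1)k-1} | a$, which is still in $M$-normal form (only the last pair is new, and its normal form status is exactly what we just deduced). This identifies it as the $M$-normal form of $y_i \cdot a$, giving $\sup_M(y_i \cdot a) = k+1$. But $a \prefix \complement_{M(k)} y_i$ implies $y_i \cdot a \prefix \Delta^k$ and thus $\sup_M(y_i \cdot a) \le k$, a contradiction.

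The main place to be careful is the indexing at the boundary: only the final block $y_{\lfloor \ell/k \rfloor}$ can carry identity padding, but for any consecutive pair $(y_i, y_{i+1})$ one has $i < \lfloor \ell/k \rfloor$, so the left-hand block $y_i$ is automatically a full block of $k$ non-identity factors and the computation $\sup_M(y_i) = k$ is legitimate. Likewise $(i+1)k \le \lfloor \ell/k \rfloor \cdot k \le \ell$ ensures $x_{(i+1)k}$ is a genuine (non-identity) factor of the original normal form, so the reduction of $a \prefix y_{i+1}$ to $a \prefix x_{(i+1)k}$ is valid for every pair that needs to be checked.
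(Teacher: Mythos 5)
Your proof is correct. It proves the same key fact as the paper---that the $M(k)$-normal form condition on consecutive blocks $y_i, y_{i+1}$ reduces to the $M$-normal form condition on the boundary factors $x_{(i+1)k-1}, x_{(i+1)k}$---but the mechanics differ. The paper computes directly that $\Delta \meet \complement_{M(k)} y_i \meet y_{i+1} = \complement_M x_{(i+1)k-1} \meet x_{(i+1)k} = \id$, which relies implicitly on knowing that the first $M$-normal form factor of $\complement_{M(k)} y_i$ is $\complement_M x_{(i+1)k-1}$ (the $\complement_M$ of the last factor of $y_i$). You avoid ever touching the normal form of the complement: instead you suppose an atom $a$ divides both $\complement_{M(k)} y_i$ and $y_{i+1}$, transfer this to $a \prefix x_{(i+1)k}$ via $\Delta\meet y_{i+1}=x_{(i+1)k}$, observe that the $M$-normal form condition forces $x_{(i+1)k-1}\mid a$, and then read off $\sup_M(y_i\cdot a)=k+1$, contradicting $y_i\cdot a \prefix \Delta^k$. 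This is a slightly longer route but arguably more elementary, since it only needs the characterisation of $\sup$ in terms of divisibility by powers of $\Delta$. One small omission: for the word $y_0\ldot\cdots\ldot y_{\lfloor\ell/k\rfloor}$ to be in $M(k)$-normal form, each $y_i$ must also be non-identity; this is immediate since $x_{ik}\ne\id$ for $i\le\lfloor\ell/k\rfloor$, and the paper does state it explicitly, so you should add a line to that effect.
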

\begin{proof}
One has $\Delta \meet \complement_{M(k)} y_{i-1} \meet y_i = \complement_M x_{ik-1} \meet x_{ik} = \id$ for $i=1,\ldots,\left\lfloor\frac{\ell}{k}\right\rfloor$ and $x_{ik}\neq\id$, hence $y_i\neq\id$, for $i=0,\ldots,\left\lfloor\frac{\ell}{k}\right\rfloor$.
\end{proof}

\begin{definition}\label{D:ParabolicSubmonoid}
Let $M$ be a Garside monoid with set of atoms $\Atoms$,
let $\delta$ be a balanced simple element of $M$, and let $M_\delta$ be the submonoid of~$M$ generated by
$\{ a\in \Atoms : a \prefix\delta\}$.

$M_\delta$ is a \emph{parabolic submonoid} of $M$, if
$\{ x \in M : x\prefix \delta \} = \Simples \cap M_\delta$ holds.
\end{definition}

\comment{\begin{itemize}
  \itemS{This isn't necessarily a question for here: How does the
    set of parabolic submonoids of $M$ relate to those of $M(k)$?}
\end{itemize}}

\begin{proposition}[{\cite[Lemma 2.1]{Godelle07}}]\label{P:ParabolicSubmonoid}
If $M$ is a Garside monoid and~$\delta$ is a balanced simple element of~$M$ such that~$M_\delta$ is a parabolic submonoid of~$M$, then~$M_\delta$ is a sublattice of~$M$ for both~$\prefix$ and~$\suffix$ that is closed under the operation~$\under$ and its dual~$/$ defined using~$\suffix$.
In particular, $M_\delta$ is a Garside monoid with Garside element $\delta$.
\end{proposition}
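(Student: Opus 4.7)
The plan is to show that $M_\delta$ is closed in $M$ under $\meet$, $\join$, $\under$, and $/$; this gives the sublattice property directly, and the Garside axioms for $(M_\delta,\delta)$ then follow routinely. The two hypotheses work in tandem: parabolicity gives $\Div(\delta)\subseteq M_\delta$ and identifies the simple elements of the candidate Garside structure, while balancedness of $\delta$ lets one transfer left-divisibility arguments to right-divisibility without leaving $M_\delta$.

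The first step is a divisor-closure principle: any (left or right) divisor in $M$ of an element of $M_\delta$ lies in $M_\delta$. The natural strategy is induction on $\|w\|_\Atoms$ for $w\in M_\delta$. Writing $S:=\Atoms\cap\Div(\delta)$, the base case reduces to the observation that divisors of elements of $S$ lie in $\{\id\}\cup S\subseteq M_\delta$. For the inductive step, write $w=au$ with $a\in S$ and $u\in M_\delta$, and analyse a prefix $z$ of $w$: if $a\prefix z$, cancellation gives $z=az'$ with $z'\prefix u$ and induction applies; if $a\meet z=\id$, one uses the lattice identity $a\join z=a(a\under z)=z(z\under a)\prefix au$ together with parabolicity ($\Div(\delta)\subseteq M_\delta$) and balancedness of $\delta$ to keep the pieces $a\under z$ and $z\under a$ inside $M_\delta$, from which $z\in M_\delta$. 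The dual statement for suffixes follows symmetrically using balancedness.

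The second step is to derive lattice and complement closure from divisor closure. Since each $w\in M_\delta$ is a product of atoms in $S$, one shows (by induction on atomic length, using balancedness and parabolicity to rewrite expressions of the form $a\delta$ as $\delta\tilde{a}$ with $\tilde{a}\in M_\delta$) that $w$ divides some power $\delta^N$. Hence any $x,y\in M_\delta$ admit a common upper bound $\delta^N\in M_\delta$, so $x\join y$ exists in $M$, divides $\delta^N$, and by divisor closure lies in $M_\delta$. The meet $x\meet y\prefix x$ lies in $M_\delta$ by divisor closure. For $\under$: from $x(x\under y)=x\join y\in M_\delta$ together with $x\under y\suffix(x\join y)$, divisor closure yields $x\under y\in M_\delta$; closure under $/$ is dual.

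Finally, the Garside axioms for $(M_\delta,\delta)$ are routine: cancellativity is inherited from $M$; atomicity holds with atom set $S$, as $\|w\|_S\le\|w\|_\Atoms<\infty$; the lattice property for $\prefix$ and $\suffix$ inside $M_\delta$ is the content of Step~2; $\delta$ is balanced in $M_\delta$ because, by parabolicity, its set of left divisors in $M_\delta$ equals $\Div(\delta)=\Simples\cap M_\delta$, which is symmetric under swapping prefixes and suffixes since $\delta$ is balanced in $M$; and $\Div(\delta)$ generates $M_\delta$ because it contains $S$. \emph{The main obstacle} is the case $a\meet z=\id$ in Step~1 (equivalently, the rewriting $a\delta=\delta\tilde{a}$ in Step~2), where both hypotheses on $\delta$ are needed simultaneously: without parabolicity, $a\under z$ need not lie in $M_\delta$, and without balancedness, the left-right symmetry on which the reduction rests fails.
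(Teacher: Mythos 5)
Your overall strategy---prove a divisor-closure principle for $M_\delta$, derive the sublattice and $\under$-closure statements from it, then check the Garside axioms---is the right route (the paper itself does not prove this proposition but cites Godelle). The gap is in Step~1, in the case $a\meet z=\id$. You assert that $a\under z$ and $z\under a$ can be ``kept in $M_\delta$'' and that $z\in M_\delta$ follows, but $z\under a$ is a \emph{right}-divisor of $a\join z=a(a\under z)$, so placing it in $M_\delta$ is itself an instance of the divisor closure you are proving; and even granting both pieces in $M_\delta$, the identity $z(z\under a)=a(a\under z)$ does not allow you to ``cancel $z\under a$ from the right inside $M_\delta$'' to extract $z$. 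The natural repair---apply the inductive hypothesis to $a\join z=a(a\under z)\in M_\delta$, of which $z$ is a prefix and $z\under a$ a suffix---breaks precisely when $a\under z=u$, i.e.\ $a\join z=w$, since then $\|a\join z\|_\Atoms=\|w\|_\Atoms$ and the induction does not decrease. This sub-case really occurs: take $\delta$ the half-twist of a parabolic $\Artin{A}_2$ inside $\Artin{A}_3$, $w=\delta=au$ with $a\in S$, and $z\in S$ the other atom of $\Artin{A}_2$; then $a\meet z=\id$ and $a\join z=\delta=w$.

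The missing idea is to compare $a$ with an \emph{atom} prefix $b$ of $z$ rather than with $z$ itself, so that $a\join b$ is \emph{simple}. From $a\join b\prefix w$ one gets $a\under b\prefix u$, hence $a\under b\in M_\delta$ by the inductive hypothesis, hence $a\join b=a(a\under b)$ is a simple element of $M_\delta$; this is where parabolicity actually enters, forcing $a\join b\in\Simples\cap M_\delta=\Div(\delta)$, so $b\prefix\delta$, i.e.\ $b\in S$. Balancedness then puts $a\under b$ and $b\under a$, as right-divisors of $a\join b\in\Div(\delta)$, into $\Div(\delta)$. Writing $w=(a\join b)v$ gives $u=(a\under b)v$, so $v\in M_\delta$ by the two-sided inductive hypothesis applied to $u$, hence $b\under w=(b\under a)v\in M_\delta$ with $\|b\under w\|_\Atoms=\|w\|_\Atoms-1$; since $b\under z\prefix b\under w$, the inductive hypothesis gives $b\under z\in M_\delta$ and so $z=b(b\under z)\in M_\delta$, closing the induction with a guaranteed decrease at every step. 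With Step~1 repaired in this way, your Steps~2 and~3 go through as written.
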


\begin{remark}
If $M_\delta$ is a parabolic submonoid of~$M$, then for any $x\in M_\delta$, the left normal form of $x$ in the Garside monoid $M_\delta$ coincides with the left normal form of $x$ in the Garside monoid $M$.
\end{remark}

\subsection{Artin monoids}\label{S:BackgroundArtin}

For details we refer to~\cite{BrieskornSaito,Deligne}.

Let~$I$ be a finite set and let $C=(m_{i,j})_{i,j\in I}$ be a \emph{Coxeter matrix}, that is, a symmetric matrix with entries $m_{i,j} = m_{j,i} \in \{2,3,\ldots,\infty\}$ for $i\ne j\in I$ and $m_{i,i}=2$ for
$i\in I$.
Associated to the pair $(I,C)$ is the monoid~$A_{(I,C)}$ generated by the elements of~$I$ subject to the relations
\begin{equation}\label{braid-relations}
    \langle i,j\rangle^{m_{i,j}} = \langle j,i\rangle^{m_{j,i}}
    \,\text{ for $i\ne j\in I$ with $m_{i,j}<\infty$ ,}
\end{equation}
where
$
    \langle i,j\rangle^{m_{i,j}} := \underbrace{i\cdot j \cdot i\cdot j\cdots}_{m_{i,j} \textrm{factors}}
    \;.
$

As these relations are homogenous, all expressions of an element~$x\in A_{(I,C)}$ as words in the generators~$I$ have the same length, so the length~$|x|_I$ of~$x$ with respect to the generating set~$I$ is well defined.

It is convenient to represent the data~$(I,C)$ in the form of a labelled (undirected) graph with vertex set~$I$: two vertices~$i$ and~$j$ are joined by an edge labelled~$m_{i,j}$ iff $m_{i,j}>2$, where the label~$3$ usually is not written explicitly.
\medskip



The \emph{Coxeter group} $W_{(I,C)}$ associated to the pair $(I,C)$ is
the group given by the quotient of the monoid $A_{(I,C)}$ by the
relations $i^2 = \id$ for all $i \in I$.  This gives a natural
epimorphism $\pi_{(I,C)}$ from the monoid~$A_{(I,C)}$ onto
$W_{(I,C)}$.

A word $i_1\ldot i_2 \ldot\cdots\ldot i_k \in I^*$ is called \emph{reduced}, if there is no word of length less than~$k$ that represents the element $i_1\cdots i_k$ of $W_{(I,C)}$.

\begin{lemma}[{\cite[Theorem 3.3.1]{CombCox}}]
  Two reduced words represent the same element of $W_{(I,C)}$ if and
  only if they can be related by \eqref{braid-relations}.  In other
  words, two minimum length words for the same element of $W_{(I,C)}$
  must also represent the same element of $A_{(I,C)}$.
\end{lemma}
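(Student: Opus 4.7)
The statement is the classical theorem of Matsumoto--Tits on reduced expressions in Coxeter groups, and the plan is to prove it by induction on the common length of the two reduced words, using the Exchange Condition for Coxeter groups as the main combinatorial tool; the second assertion then follows immediately.

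The ``if'' direction is immediate: the relations~\eqref{braid-relations} hold in both $A_{(I,C)}$ and $W_{(I,C)}$, and each of them preserves word length (no $i^2=\id$ relation is invoked), so applying them to a reduced word representing $w\in W_{(I,C)}$ yields another reduced word for~$w$. For the ``only if'' direction, let $u = i_1\ldot\cdots\ldot i_k$ and $v = j_1\ldot\cdots\ldot j_k$ be two reduced words representing the same $w\in W_{(I,C)}$. I would argue by induction on~$k$, the case $k\le 1$ being trivial. If $i_1=j_1$, then by cancelling this common first letter on the left (valid in~$W_{(I,C)}$ since each generator is an involution) one obtains two reduced words of length $k-1$ representing the same element, which by induction are related by the braid relations; prepending $i_1$ gives the desired transformation from $u$ to~$v$.

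The substantive case is $i_1\ne j_1$. Write $s=i_1$, $t=j_1$ and $m=m_{s,t}$. I would establish, using the Exchange Condition, that $w$ admits a reduced expression whose first~$m$ letters are $\langle s,t\rangle^m$ (this forces $m<\infty$): starting from a reduced expression of~$w$ beginning with~$t$, the hypothesis $\ell(sw)<\ell(w)$ allows one to exchange in a letter~$s$ at the front; iterating, one alternately exchanges in $s$ and $t$, building up a reduced prefix $sts\cdots$, and the process cannot terminate before length~$m$ because the length of the element keeps strictly decreasing under multiplication by the appropriate generator on the left. Thus $w$ has reduced expressions of the form $\langle s,t\rangle^m \ldot u'$ and $\langle t,s\rangle^m \ldot u''$. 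By the induction hypothesis, $u'$ and $u''$ (which are shorter reduced expressions of the same element) are related by the braid relations. Combining this with the single braid relation $\langle s,t\rangle^m=\langle t,s\rangle^m$, together with two applications of the first (``$i_1=j_1$'') case that relate $u$ to $\langle s,t\rangle^m\ldot u'$ and $v$ to $\langle t,s\rangle^m\ldot u''$ via induction, yields a chain of braid relations from $u$ to $v$.

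The hard part is clearly the ``alternating prefix'' step, which relies on the Exchange (or Strong Exchange) Condition; a self-contained proof of this condition requires either a careful combinatorial argument about reflections in a Coxeter system or the Tits cone in the geometric representation, neither of which is entirely short. Since the statement is explicitly cited from~\cite[Theorem~3.3.1]{CombCox}, my preferred presentation would be simply to invoke that result for the ``only if'' direction rather than reproduce the full argument. The second sentence of the lemma is then an immediate corollary: if $u$ and $v$ are minimum-length words for the same element of $W_{(I,C)}$, they are related by applications of~\eqref{braid-relations}, and since these are defining relations of $A_{(I,C)}$ they represent the same element there as well.
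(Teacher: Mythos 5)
The paper gives no proof of this lemma; it is stated with the citation to Björner--Brenti and no further argument, exactly as you ultimately recommend. Your sketch of the Matsumoto--Tits argument (induction on length, with the Exchange Condition producing the alternating prefix $\langle s,t\rangle^{m}$ in the case of differing first letters) is the standard and correct proof, so the proposal both matches the paper's approach and correctly records what a self-contained proof would require.
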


As a consequence of the previous lemma, we have a well-defined map
$r_{(I,C)}$ from $W_{(I,C)}$ to $A_{(I,C)}$ which, for each element of
$W_{(I,C)}$, takes a reduced word representing the element and
reinterprets it as an element of $A_{(I,C)}$.  This gives a
set-theoretic section of $\pi_{(I,C)}$, that is, one has
$\pi_{(I,C)} \circ r_{(I,C)} = \mathrm{id}_{W_{(I,C)}}$.
\medskip

The monoid~$A_{(I,C)}$ is said to be \emph{of spherical type}, if the corresponding Coxeter group~$W_{(I,C)}$ is finite.
In this case,~$W_{(I,C)}$ has a unique longest element and lifting this element, via~$r_{(I,C)}$, gives a Garside element in~$A_{(I,C)}$.
The Garside structure this induces is as follows: The atoms of~$A_{(I,C)}$ are the elements of~$I$.  The set of simple elements is the image of~$r_{(I,C)}$. (That is, an element of~$A_{(I,C)}$ is simple if and only if it cannot be written as a word containing the square of a generator.)
Moreover, the prefix and suffix orders on the set of simple elements correspond to the right and, respectively, left weak orders on the Coxeter group~$W_{(I,C)}$.
Throughout this paper we will refer to the monoid $A_{(I,C)}$ with this particular Garside structure as the \emph{Artin monoid of type $(I,C)$}.

As a consequence of the simple elements being precisely those elements that do not involve the square of a generator, the Garside normal form condition can be expressed in terms of starting and finishing sets:

\begin{theorem}[{\cite[Lemma 4.2]{Charney1993}}]\label{Artin-start-finish}
If $M$ is an Artin monoid of spherical type with set of atoms $\Atoms$ and $s\in\Simples$, then $\Start(\complement s) = \Atoms\setminus\Fin(s)$.
\end{theorem}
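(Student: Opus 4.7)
The plan is to exploit the defining identity $s\complement s=\Delta$ together with the characteristic feature of Artin monoids of spherical type: a simple element is precisely an element whose length in the monoid (well-defined since the braid relations~\eqref{braid-relations} are homogeneous) equals the length of its image under $\pi_{(I,C)}$ in the Coxeter group~$W_{(I,C)}$.

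First I would establish the following key equivalence: for a simple element~$s$ and an atom~$a$, the product $sa$ is simple if and only if $a\notin\Fin(s)$. If $a\in\Fin(s)$, write $s=s'a$ in the monoid; then $sa=s'a^2$ maps under $\pi_{(I,C)}$ to $\pi_{(I,C)}(s')$, which has length $|s|_I-1$, while $|sa|_I=|s|_I+1$, so $sa$ cannot be simple. Conversely, if $a\notin\Fin(s)$, then $\pi_{(I,C)}(sa)=\pi_{(I,C)}(s)\,a$ has length $\ell(\pi_{(I,C)}(s))+1=|sa|_I$ (using the correspondence between finishing atoms and right descents in~$W_{(I,C)}$, which comes from the fact that any reduced expression for $\pi_{(I,C)}(s)a$ lifts via $r_{(I,C)}$ to a simple element equal to $sa$), so $sa$ is simple.

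Given this equivalence, the theorem would follow by a short direct argument using only the Garside lattice structure and cancellativity. If $a\in\Start(\complement s)$, write $\complement s=au$; then $\Delta=s\complement s=(sa)u$, so $sa\prefix\Delta$, whence $sa$ is simple and therefore $a\notin\Fin(s)$. Conversely, if $a\notin\Fin(s)$, then $sa$ is simple, hence $sa\prefix\Delta$, so $\Delta=sa\cdot v$ for some~$v\in M$; comparing with $\Delta=s\complement s$ and left-cancelling~$s$ yields $\complement s=av$, so $a\in\Start(\complement s)$.

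I expect the main obstacle to be the key equivalence between simplicity of~$sa$ and $a\notin\Fin(s)$, which is where the Artin-specific structure enters: without the homogeneity of the defining relations and the set-theoretic section $r_{(I,C)}$ lifting reduced words to simple elements, there would be no clean way to control when appending an atom to a simple element yields another simple element. Once this is established, the deduction of $\Start(\complement s)=\Atoms\setminus\Fin(s)$ is essentially pure Garside theory and goes through for any monoid where the analogous appending statement holds.
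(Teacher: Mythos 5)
The paper supplies no proof of this statement: it is quoted directly from Charney (\cite[Lemma 4.2]{Charney1993}), so there is no in-paper argument to compare against; I can only assess your proposal on its own. Your argument is correct, and it is, I believe, essentially the standard proof. The pivotal step is exactly where you locate it: for $s\in\Simples$ and $a\in\Atoms$, one has $sa\in\Simples$ iff $a\notin\Fin(s)$. Your forward direction via $sa=s'a^2$ and length-counting is fine (noting that $s'\prefix s\prefix\Delta$, so $s'$ is simple and $\ell(\pi_{(I,C)}(s'))=|s'|_I=|s|_I-1$). The reverse direction is stated a little loosely: what you actually need from $a\notin\Fin(s)$ is that $\pi_{(I,C)}(a)$ is not a right descent of $\pi_{(I,C)}(s)$, and the cleanest route is the contrapositive via the exchange condition plus Tits' theorem (the cited fact that reduced words for the same element of $W_{(I,C)}$ are related by braid relations) -- if $\pi_{(I,C)}(a)$ were a right descent, some reduced word for $\pi_{(I,C)}(s)$ ends in $a$, and this lifts via $r_{(I,C)}$ to an expression of $s$ in $M$ ending in $a$, so $a\in\Fin(s)$. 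With the descent correspondence in hand, $\ell(\pi_{(I,C)}(sa))=\ell(\pi_{(I,C)}(s))+1=|sa|_I$, so $sa$ is square-free and hence simple. The final reduction of $\Start(\complement s)=\Atoms\setminus\Fin(s)$ to this equivalence, using $s\complement s=\Delta$, $\Div(\Delta)$ being the set of $\prefix$-divisors of $\Delta$, and left-cancellation, is exactly right and, as you say, is pure Garside bookkeeping valid in any Garside monoid that satisfies the appending criterion.
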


\begin{corollary}\label{Artin-normal-form}
Suppose that $M$ is an Artin monoid of spherical type.  Then, for $x, y \in \Simples$, one has $x | y$ if and only if $\Fin(x) \supseteq \Start(y)$ holds.
\end{corollary}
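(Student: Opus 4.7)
The plan is to unpack the definition of the normal form condition and then apply the preceding theorem to rewrite it in terms of starting and finishing sets.

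First, I would recall that by the definition of Garside normal form, the condition $x \mid y$ is equivalent to $\complement x \meet y = \id$. I then want to translate this meet condition into a statement about starting sets: namely, that $\complement x \meet y = \id$ holds if and only if $\Start(\complement x) \cap \Start(y) = \emptyset$. The forward direction is immediate, since any atom lying in $\Start(\complement x) \cap \Start(y)$ would also be a prefix of $\complement x \meet y$, contradicting that this meet is trivial. For the reverse direction, observe that any non-identity element of an atomic cancellative monoid admits at least one atom as a prefix; hence if $\complement x \meet y \neq \id$, there is some atom $a \prefix \complement x \meet y$, giving $a \in \Start(\complement x) \cap \Start(y)$.

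Next, I would apply \autoref{Artin-start-finish} to the simple element $x$, which gives $\Start(\complement x) = \Atoms \setminus \Fin(x)$. Substituting this in, the condition $\Start(\complement x) \cap \Start(y) = \emptyset$ becomes $(\Atoms \setminus \Fin(x)) \cap \Start(y) = \emptyset$, which is plainly equivalent to $\Start(y) \subseteq \Fin(x)$. Chaining these equivalences yields the claim.

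I do not expect any serious obstacle here: the whole argument is a chain of trivial equivalences, with the only non-trivial input being \autoref{Artin-start-finish} (which is already stated). The mildest subtlety is the step identifying trivial meets with disjoint starting sets, but this is a standard consequence of atomicity and the lattice property of $\prefix$ built into the Garside structure.
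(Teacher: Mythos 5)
Your proposal is correct and follows exactly the same route as the paper's own (terse) argument: unwind the normal form condition to $\complement x \meet y = \id$, identify this with $\Start(\complement x) \cap \Start(y) = \emptyset$ via atomicity, and then substitute using \autoref{Artin-start-finish}. The only difference is that you spell out the atomicity step, which the paper leaves implicit.
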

\begin{proof}
We have $x|y$ if and only if $\complement x \meet y = \id$.  The latter is equivalent to $\Start(\partial x)\intersect \Start(y)=\emptyset$, so the claim follows with \autoref{Artin-start-finish}.
\end{proof}

If $A_{(I,C)}$ is an Artin monoid and $J\subseteq I$, then the submonoid of~$A_{(I,C)}$ generated by~$J$ is a parabolic submonoid of~$A_{(I,C)}$.  Indeed, it is (isomorphic to) the Artin monoid~$A_{(J,C')}$, where~$(J,C')$ is the subgraph of the graph~$(I,C)$ spanned by the vertex set~$J$.
\medskip

If the graph~$(I,C)$ is connected, the Artin monoid~$A_{(I,C)}$ is called \emph{irreducible}.  As generators in different connected components of the graph $(I,C)$ commute, it is obvious that every Artin monoid can be decomposed (in a unique way) as the direct product of irreducible Artin monoids.

The classification of finite Coxeter groups yields a classification of the irreducible Artin monoids of spherical type:
\begin{theorem}[\cite{Coxeter35,BrieskornSaito,Deligne}]\label{T:ArtinClassification}
The irreducible Artin monoids of spherical type are precisely those in the following list:
\[\begin{array}{l@{\qquad}l}
\Artin{A}_n:\quad
    \begin{xy}
      0;<2em,0em>:<0em,2em>::
      (1,0)*+{1}="1";
      (2,0)*+{2}="2";
      (3,0)*+{3}="3";
      (3.75,0)="4";
      (4.7,0)="5";
      (5.5,0)*+{n}="6";
      {\ar@{-}     "1";"2"};
      {\ar@{-}     "2";"3"};
      {\ar@{-}     "3";"4"};
      {\ar@{..}    "4";"5"};
      {\ar@{-}     "5";"6"};
    \end{xy}
    & (n\ge 1)
  \\[1.5ex]
\Artin{B}_n:\quad
    \begin{xy}
      0;<2em,0em>:<0em,2em>::
      (0,0)*+{0}="0";
      (1,0)*+{1}="1";
      (2,0)*+{2}="2";
      (3,0)*+{3}="3";
      (3.75,0)="4";
      (4.6,0)="5";
      (6,0)*+{(n-1)}="6";
      {\ar@{-}^{4}  "0";"1"};
      {\ar@{-}     "1";"2"};
      {\ar@{-}     "2";"3"};
      {\ar@{-}     "3";"4"};
      {\ar@{..}    "4";"5"};
      {\ar@{-}     "5";"6"};
    \end{xy}
    & (n\ge 2)
  \\[2ex]
\Artin{D}_n:\quad
    \begin{xy}
      0;<2em,0em>:<0em,1em>::
      (0,1)*+{0}="0";
      (0,-1)*+{1}="1";
      (1,0)*+{2}="2";
      (2,0)*+{3}="3";
      (3,0)*+{4}="4";
      (3.75,0)="5";
      (4.6,0)="6";
      (6,0)*+{(n-1)}="7";
      {\ar@{-}  "0";"2"};
      {\ar@{-}  "1";"2"};
      {\ar@{-}  "2";"3"};
      {\ar@{-}  "3";"4"};
      {\ar@{-} "4";"5"};
      {\ar@{..}  "5";"6"};
      {\ar@{-}  "6";"7"};
    \end{xy}
    & (n\ge 3;\; \Artin{D}_3 = \Artin{A}_3)
  \\
\Artin{E}_n:\quad
    \begin{xy}
      0;<2em,0em>:<0em,2em>::
      (0,0)*+{1}="1";
      (1,0)*+{2}="2";
      (2,0)*+{3}="3";
      (3,0)*+{4}="4";
      (3.75,0)="5";
      (4.6,0)="6";
      (6,0)*+{(n-1)}="7";
      (2,1)*+{0}="0";
      {\ar@{-}  "1";"2"};
      {\ar@{-}  "2";"3"};
      {\ar@{-}  "3";"4"};
      {\ar@{-}  "4";"5"};
      {\ar@{..} "5";"6"};
      {\ar@{-}  "6";"7"};
      {\ar@{-}  "3";"0"};
    \end{xy}
    & (n=6,7,8)
  \\[2.5ex]
\multicolumn{2}{l}{
\Artin{F}_4:\quad
    \begin{xy}
      0;<2em,0em>:<0em,2em>::
      (0,0)*+{1}="1";
      (1,0)*+{2}="2";
      (2,0)*+{3}="3";
      (3,0)*+{4}="4";
      {\ar@{-}     "1";"2"};
      {\ar@{-}^{4}  "2";"3"};
      {\ar@{-}     "3";"4"};
    \end{xy}
  \qquad
\Artin{H}_3:\quad
    \begin{xy}
      0;<2em,0em>:<0em,2em>::
      (0,0)*+{1}="1";
      (1,0)*+{2}="2";
      (2,0)*+{3}="3";
      {\ar@{-}^{5}  "1";"2"};
      {\ar@{-}     "2";"3"};
    \end{xy}
  \qquad
\Artin{H}_4:\quad
    \begin{xy}
      0;<2em,0em>:<0em,2em>::
      (0,0)*+{1}="1";
      (1,0)*+{2}="2";
      (2,0)*+{3}="3";
      (3,0)*+{4}="4";
      {\ar@{-}^{5}  "1";"2"};
      {\ar@{-}     "2";"3"};
      {\ar@{-}     "3";"4"};
    \end{xy}
}
  \\[2ex]
\multicolumn{2}{l}{
\Artin{I}_2(p):\quad
    \begin{xy}
      0;<2em,0em>:<0em,2em>::
      (0,0)*+{1}="1";
      (1,0)*+{2}="2";
      {\ar@{-}^{p}  "1";"2"};
    \end{xy}
    \qquad (p\ge 3;\; \Artin{I}_2(3) = \Artin{A}_2;\; \Artin{I}_2(4) = \Artin{B}_2)
}
\end{array}\]
\end{theorem}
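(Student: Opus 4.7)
This is the classical Coxeter classification, so my plan is to reduce to the familiar statement about finite real reflection groups and then cite (or sketch) the standard combinatorial elimination.

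First I would invoke the fact, due to Brieskorn--Saito and Deligne, that the Artin monoid $A_{(I,C)}$ is of spherical type if and only if the associated Coxeter group $W_{(I,C)}$ is finite, together with the observation already noted in the text that generators attached to different connected components of the graph $(I,C)$ commute. The second observation shows that irreducibility of $A_{(I,C)}$ (defined via connectedness of the diagram) is equivalent to irreducibility of $W_{(I,C)}$ as a reflection group. Hence the theorem reduces to classifying connected Coxeter diagrams whose Coxeter group is finite.

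Next, I would set up the Tits (geometric) representation: on the real vector space $V$ with basis $(e_i)_{i\in I}$, define the symmetric bilinear form
\[
  B(e_i,e_j) \;=\; -\cos\bigl(\pi/m_{i,j}\bigr),
\]
with $B(e_i,e_i)=1$, and let $W_{(I,C)}$ act by the reflections $s_i(v)=v-2B(e_i,v)e_i$. The standard theorem is that this action is faithful, and that $W_{(I,C)}$ is finite if and only if $B$ is positive definite. Thus the task becomes the purely linear-algebraic problem of classifying the connected labelled graphs $(I,C)$ for which the matrix $\bigl(-\cos(\pi/m_{i,j})\bigr)$ is positive definite.

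The final and most delicate step is the combinatorial elimination itself, which I would carry out by induction on $\lvert I\rvert$ using the fact that positive definiteness is inherited by every full subdiagram. The key subdiagram computations are the classical ones: the cycle $\widetilde{A}_n$ has determinant $0$; the diagram $\widetilde{D}_n$ with a fork at each end has determinant $0$; a vertex of valence $\ge 4$ already forces $\det B\le 0$; and an edge labelled $p\ge 6$ in any diagram with at least three vertices is forbidden. Combined with the constraints ``at most one vertex of valence $3$'', ``at most one label $\ge 4$'', and explicit upper bounds on the arm lengths of $T$-shaped diagrams (yielding $E_6,E_7,E_8$) and on chain lengths with a $5$-label (yielding $H_3,H_4$), these forbidden configurations exhaust the possibilities and leave precisely the list $A_n,B_n,D_n,E_{6,7,8},F_4,H_{3,4},I_2(p)$ stated in the theorem. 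The main obstacle is organising this case analysis cleanly; I would expect to lean on the explicit determinant computations in Bourbaki's treatment rather than redo them in the paper, since the result is only being invoked here to enumerate the monoids to which the subsequent structural theorems will be applied.
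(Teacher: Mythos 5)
The paper does not prove this theorem; it is cited directly to Coxeter, Brieskorn--Saito, and Deligne, and your sketch is precisely the standard route those references take: reduce the spherical-type condition to finiteness of the Coxeter group, pass to the Tits representation and the positive-definiteness criterion for the form $B(e_i,e_j)=-\cos(\pi/m_{i,j})$, and then run the diagram elimination via forbidden positive-semidefinite subdiagrams. Your outline of the elimination (cycles, forks, high valence, large labels, arm-length bounds) is the correct combinatorial skeleton, so the proposal matches the intended source of the result.

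One small point worth being careful about if you were to write this out in full: the claim that ``an edge labelled $p\ge 6$ in any diagram with at least three vertices is forbidden'' is right, but it sits alongside the separate bound for the label $5$ (chains with a $5$-label cannot exceed four vertices, ruling out anything beyond $\Artin{H}_4$), and the two must be treated as distinct cases in the induction rather than folded together. As you note, the determinant computations themselves are most cleanly delegated to Bourbaki, which is exactly what the paper does by citing the classification rather than reproving it.
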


\subsection{Penetration distance and penetration sequences}\label{S:BackgroundPenetration}

Throughout this section, let $M$ be a Garside monoid with Garside element $\Delta=\Delta_M$, set of atoms $\Atoms=\Atoms_M$, and set of proper simple elements $\pSimples=\pSimples_M$.

In \cite{GT13}, we investigated the \emph{penetration distance}, that is, the number of factors in the normal form of an element $x\in M$ that undergo a non-trivial change when $x$ is multiplied on the right by an element $y\in M$:

\begin{definition}[{\cite[Definition 3.2]{GT13}}]
  For $x,y\in M$, the \emph{penetration distance} for the product
  $xy$ is
  \[
  \pd(x,y) = \cl(x) - \max\big\{ i \in \{0,\ldots,\cl(x)\} :
        x\Delta^{-\inf(x)}  \wedge \Delta^i
        = xy\Delta^{-\inf(xy)} \wedge \Delta^i \big\} \,.
  \]
\end{definition}

For certain probability distributions for $x$ and $y$, we calculated the expected value of $\pd(x,y)$ by analysing the regular languages $\calL$ and $\PSeq = \PSeq_M = \bigcup_{k\in\NN} \PSeq_M^{(k)}$, where
$\PSeq^{(k)}=\PSeq_M^{(k)}$ denotes the set of all \emph{penetration sequences} of length $k$:

\begin{definition}[{\cite[Definition 4.2]{GT13}}]
  A word $(s_k, m_k) \ldot\cdots \ldot(s_2, m_2) \ldot(s_1, m_1)$ in
  $\left(\pSimples \times \pSimples\right)^*$ is a \emph{penetration
    sequence of length $k$} if $m_1 \preccurlyeq \partial s_1$ holds, and if one has
  $s_i m_i \ne \Delta$, $\partial s_{i+1} \wedge s_i = \id$, and
  $m_{i+1} = \partial s_{i+1} \wedge s_{i} m_{i}$ for $i=1,\ldots,k-1$.
\end{definition}

\begin{notation}\label{N:GrowthRates}
 Observe that the regular languages $\calL_M$, $\calLbar_M$ and $\PSeq_M$ are factorial (that is, closed under taking subwords) hence, by \cite[Corollary 4]{Shur08}, there exist constants $p_M,q_M,r_M\in\NN$ and $\alpha_M,\beta_M,\gamma_M\in \{0\}\cup [1,\infty[$ such that one has
 \[
   \big|\PSeq_M^{(k)}\big|\in\Theta(k^{p_M}{\alpha_M}^k) \;,\;\;
   \big|\calL_M^{(k)}\big|\in\Theta(k^{q_M}{\beta_M}^k)\; \text{ and }\;
   \Big|\calLbar_M^{(k)}\Big|\in\Theta(k^{r_M}{\gamma_M}^k) \;.
 \]
\end{notation}

One of the key results of \cite{GT13} was that, if $x\in\calL_M^{(k)}$ and $y\in\Atoms_M$ are chosen with uniform probability, the expected value of the penetration distance is uniformly bounded (that is, there exists a bound that is independent of $k$) if one has $\alpha_M < \beta_M$~\cite[Theorem 4.7]{GT13}.

\subsection{\ZS{} products}\label{S:BackgroundZappaSzep}

\ZS{} products of Garside monoids were considered in \cite{Picantin01} (there called ``crossed products'') and \cite{Zappa-Szep}.  The notion of \ZS{} products generalises those of direct and semidirect products; the fundamental property being the existence of unique decompositions of the elements of a monoid as products of elements of two submonoids.

\begin{definition}[{\cite[Definition~1]{Zappa-Szep}}]\label{D:ZappaSzepProduct}
  Let $M$ be a monoid with two submonoids~$G$ and~$H$.  We say that~$M$ is
  the (internal) \emph{\ZS{} product} of~$G$ and~$H$, written
  $M=G\zs H$, if for every $x\in M$ there exist unique $g_1, g_2\in
  G$ and $h_1, h_2 \in H$ such that $g_1 h_1 = x = h_2 g_2$.
\end{definition}

It was shown in \cite{Zappa-Szep} that the \ZS{} product $M=G\zs H$ admits a Garside structure if and only if both~$G$ and~$H$ do~\cite[Theorem~34, Theorem~37]{Zappa-Szep}.
\bigskip

If Garside structures for $M$, $G$ and $H$ are chosen in a compatible way, then normal forms in~$M$ can be completely described in terms of normal forms in~$G$ and in~$H$:

\begin{definition}[{\cite[Definition~40]{Zappa-Szep}}]\label{D:ZS-GarsideStructure}
  The tuple $(M,G,H,\Delta_M, \Delta_G, \Delta_H)$ is a \emph{\ZS{} Garside structure} if:
  \begin{enumerate}[(a)]
   \item $M=G\zs H$ holds;
   \item $M$, $G$ and $H$ are Garside monoids with Garside elements $\Delta_M$, $\Delta_G$ and $\Delta_H$, respectively; and
   \item $\Delta_M = \Delta_G\Delta_H$ holds.
  \end{enumerate}
\end{definition}

\begin{theorem}[{\cite[Theorem~30, Theorem~41, Corollary~55]{Zappa-Szep}}]\label{T:ZappaSzep}
  Suppose that $(M,G,H,\Delta_M, \Delta_G, \Delta_H)$ is a \ZS{} Garside structure.
 
  Then the following hold.
  \begin{enumerate} \itemsep 0em \vspace{-0.5\topskip}
   \item
     The map $G \times H \to M$ given by $(g,h) \mapsto g \join h$ is a poset isomorphism
     $(G, \prefix_G)\times(H, \prefix_H) \to (M, \prefix_M)$.

   \item
     For all $g \in G$ and $h\in H$, one has
     \begin{align*}
       {\inf}_M(g \join h) &= \min({\inf}_G(g), {\inf}_H(h)) \\
        {\sup}_M(g \join h) &= \max({\sup}_G(g), {\sup}_H(h))
        \;.
     \end{align*}
   \item
     The map $\psi \from \calLbar_G \times \calLbar_H \to \calLbar_M$ given by
     \[
        \psi\big(g_1 | g_2 | \cdots | g_m \,,\, h_1 | h_2 | \cdots | h_n\big)
          = \NF\big( (g_1 g_2 \cdots g_m) \join (h_1 h_2 \cdots h_n) \big)
     \]
     is a bijection.
  \end{enumerate}
\end{theorem}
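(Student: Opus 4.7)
My plan is to prove the three parts in order, with part (1) serving as the structural foundation for (2) and (3).

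For part (1), I would construct the inverse of $\phi \from (g,h) \mapsto g \join h$ explicitly via the \ZS{} decomposition. Given $x\in M$, write $x=g\cdot h'$ uniquely with $g\in G$, $h'\in H$ and $x=h\cdot g'$ uniquely with $h\in H$, $g'\in G$, and send $x$ to $(g,h)$. To justify this, note that $g$ and $h$ are both prefixes of $x$, so $g\join h \prefix_M x$ exists (using that $M$ is a lattice); for the reverse direction, writing $g\join h=g\cdot u = h\cdot v$ and applying the uniqueness part of \autoref{D:ZappaSzepProduct} to the element $g\join h$ identifies $u\in H$ and $v\in G$, which then together with minimality of $g\join h$ among common right-extensions of $g$ and $h$ forces $g\join h = x$. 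Monotonicity of $\phi$ is immediate from the lattice axioms, and monotonicity of $\phi^{-1}$ follows by applying the uniqueness of the \ZS{} decomposition to the quotient of a divisor pair.

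For part (2), the central observation is that $\Delta_M=\Delta_G\Delta_H$ together with the inclusions $\Delta_G,\Delta_H \prefix_M \Delta_M$ force $\Delta_M = \Delta_G \join \Delta_H$, and inductively $\Delta_M^k = \Delta_G^k \join \Delta_H^k$ for all $k\in\NN$. Applying the poset isomorphism from part (1), one has $\Delta_M^k \prefix_M g\join h$ iff $\Delta_G^k \prefix_G g$ and $\Delta_H^k \prefix_H h$, which gives the infimum formula by taking the largest such $k$. The supremum formula follows by a symmetric argument using the right-\ZS{} decomposition and the fact that $g\join h \prefix_M \Delta_M^k$ iff $g \prefix_G \Delta_G^k$ and $h \prefix_H \Delta_H^k$. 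For part (3), the key structural step is: for $g_i \in \Simples_G$ and $h_i \in \Simples_H$ with $g_1|g_2$ in $G$ and $h_1|h_2$ in $H$, one has $(g_1\join h_1)\,|\,(g_2\join h_2)$ in $M$. Once this is established, $\psi$ lands in $\calLbar_M$ (after padding the shorter input with copies of $\Delta_G$ or $\Delta_H$ on the left, which remain allowed letters of $\calLbar$), injectivity of $\psi$ follows factor-by-factor from part (1), and surjectivity comes from decomposing each factor of an $M$-normal form via $\phi^{-1}$ and verifying that the resulting $G$- and $H$-sequences satisfy the respective normal form conditions.

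The hard part will be the interplay between $\complement_M$ and the \ZS{} decomposition. The natural candidate identity is $\complement_M(g\join h) = (\complement_G g) \join (\complement_H h)$ (up to appropriate conventions regarding which side the $\Delta$ factors sit on), and to establish this one must exploit $\Delta_M=\Delta_G\Delta_H$ together with the fact that the left and right \ZS{} decompositions of $\Delta_M$ are compatible. Given this identity, the normal form condition $\complement_M(g_1\join h_1) \meet_M (g_2\join h_2) = \id$ translates, via the poset isomorphism of part (1), into the pair of conditions $\complement_G g_1 \meet_G g_2 = \id$ and $\complement_H h_1 \meet_H h_2 = \id$, closing the argument. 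Handling the asymmetry of lengths $m\ne n$ in $\psi$ then requires the supremum formula from part (2) to conclude that $(g_1\cdots g_m)\join(h_1\cdots h_n)$ has $M$-supremum $\max(m,n)$, so that its $M$-normal form has exactly $\max(m,n)$ non-trivial factors.
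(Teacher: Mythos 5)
The paper does not prove this statement; it is cited verbatim from \cite{Zappa-Szep} (Theorems~30 and~41, Corollary~55), so there is no internal proof to compare against. Your outline is in the right spirit, but it has genuine gaps and cannot be regarded as a proof.

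The gap you flag yourself is the decisive one: the compatibility of $\complement_M$ with the \ZS{} decomposition is the load-bearing technical result for part~(3), and you leave it entirely open. (It corresponds to Lemma~42 of \cite{Zappa-Szep}, which the present paper invokes separately in the proof of \autoref{L:PSeqProduct}.) Without it, the claim ``$\complement_M(g_1\join h_1)\meet_M(g_2\join h_2)=\id$ iff $\complement_G g_1\meet_G g_2=\id$ and $\complement_H h_1\meet_H h_2=\id$'' is unsupported, and the argument for (3) does not close. Parts (1) and (2) also contain unacknowledged gaps. In (1), to invert $\phi\from(g,h)\mapsto g\join h$ via $x\mapsto(g,h)$ where $x=gh'=hg'$, you write $g\join h=gu=hv$ and invoke ``the uniqueness part of \autoref{D:ZappaSzepProduct}'' to conclude $u\in H$ and $v\in G$. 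Uniqueness there only concerns the \ZS{} decomposition of a fixed element; it does not say that factoring $g\join h$ through an arbitrary $G$-prefix $g$ yields an $H$-cofactor. What you actually need is that the $G$-part of the \ZS{} decomposition of $x$ is the $\prefix_M$-greatest $G$-divisor of $x$, which is itself a nontrivial structural lemma. In (2), the asserted identity $\Delta_M^k=\Delta_G^k\join\Delta_H^k$ does not follow ``inductively'' as indicated: $(g,h)\mapsto g\join h$ is a poset map, not a monoid homomorphism, and in a \ZS{} product the factors interact via twisting actions, so $(\Delta_G\Delta_H)^k$ does not simply factor as $\Delta_G^k\cdot(\text{something in }H)$. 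The identity is true, but recovering it cleanly requires knowing which pairs $(g,h)$ correspond under $\phi$ to elements with $\sup_M\le k$, which is essentially the content of part~(2) and hence circular at this stage of your argument.
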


\section{Essential simple elements}\label{S:Essential}

Throughout this section, let~$M$ be a Garside monoid with Garside element~$\Delta=\Delta_M$,
set of atoms~$\Atoms=\Atoms_M$ and set of proper simple elements~$\pSimples=\pSimples_M$, and let~$\calL=\calL_M$ be the set of words over the alphabet~$\pSimples$ that are in normal form.

By the \emph{acceptor graph}~$\Acceptor=\Acceptor_M$ of~$\calL$ we shall mean the labelled directed graph
with vertex set~$\pSimples$ and, for $x,y\in\pSimples$, an edge labelled~$y$ from~$x$ to~$y$ iff
$\complement x \meet y = \id$.  Paths in this graph are in 1-to-1 correspondence with words in normal form.  This graph can be made into a deterministic finite state automaton (DFA) accepting~$\calL$ by adding an initial vertex~$\id_\Gamma$ with an edge labelled~$y$ from~$\id_\Gamma$ to~$y$ for each $y\in\pSimples$.

\begin{definition}
  Say that a simple element $x \in \pSimples$ is \emph{essential} if
  for all $K \in \NN$ there exists a word $x_1 |x_2 |\cdots |x_n \in
  \calL$ such that $x = x_i$ for some $K < i < n-K$.
\end{definition}
Let $\Ess = \Ess_M$ denote the set of all essential simple elements and let~$\calL_\Ess = \calL_{\Ess,M}$ be the restriction of $\calL_M$ to essential simple elements.
\begin{align*}
 \calL_\Ess := \calL_{\Ess,M} := \calL_M \intersect \Ess^* \\
 \calL_\Ess^{(k)} := \calL_{\Ess,M}^{(k)} := \calL^{(k)}_M \intersect \Ess^*
\end{align*}

\begin{remark}
 A proper simple element is essential if and only if it is contained in a non-singleton strongly connected component of the acceptor~$\Acceptor$ (that is, in a strongly connected component that contains at least one edge).
\end{remark}

\begin{example}
  In the Garside monoid $M = \langle a, b \st aba = b^2\rangle^+$ with Garside element $\Delta = b^3$ one has $\complement b \meet x = b^2 \meet x \ne
  \id$ for all proper simple elements $x\in\pSimples$, whence $b$ can
  only occur as the final simple factor and $b^2$ can only occur as
  the first simple factor in a word in normal form.  That is, the
  simple elements $b$ and $b^2$ are not essential.
\end{example}


\begin{proposition} \label{prop:complement-essential}
  A proper simple element $x\in\pSimples$ is essential if and only if
  its complement $\complement x$ is essential.
\end{proposition}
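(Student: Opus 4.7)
The plan is to use the characterisation noted in the Remark: $x\in\pSimples$ is essential if and only if $x$ lies in a non-singleton strongly connected component of~$\Acceptor$, equivalently if and only if there is a closed walk of length at least~$1$ in~$\Acceptor$ through~$x$.

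Two ingredients will be needed. First, $\complement^2$ equals conjugation by~$\Delta^{-1}$ (from $\complement x=x^{-1}\Delta$ in the group of fractions, one computes $\complement^2 x=\Delta^{-1}x\Delta$), hence is a monoid automorphism of~$M$ that restricts to a lattice automorphism of $(\Simples,\prefix)$. Since the acceptor edges $u\to v$ are characterised by the meet condition $\complement u\meet v=\id$, which is preserved by any such lattice automorphism, $\complement^2$ is a graph automorphism of~$\Acceptor$; in particular, essentialness is invariant under~$\complement^2$. Second, applying $\complement^2$ to the defining meet condition yields the dual-edge identity
\[
    \complement u\meet v=\id
    \iff
    \complement^3 u\meet\complement^2 v=\id,
\]
whose right-hand side is the defining condition of the edge $\complement v\to\complement^3 u$; hence $u\to v$ is an edge of $\Acceptor$ if and only if $\complement v\to\complement^3 u$ is.

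Assume now that $x$ is essential, with witness a closed walk $u_0\to u_1\to\cdots\to u_k=u_0=x$ of length $k\ge 1$. Applying the dual-edge identity to each edge $u_{k-m}\to u_{k-m+1}$ and then the $\complement^{2(m-1)}$-automorphism of $\Acceptor$ produces an edge $\complement^{2m-1}u_{k-m+1}\to\complement^{2m+1}u_{k-m}$ for every $m\ge 1$. Concatenating these yields a walk
\[
    \complement x=\complement u_0\to\complement^3 u_{k-1}\to\complement^5 u_{k-2}\to\cdots
\]
which after $m$ steps sits at the vertex $\complement^{2m+1}u_{k-m\bmod k}\in\pSimples$ (recall that $\complement^j$ preserves $\pSimples$). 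Since $\pSimples$ is finite, $\complement^2$ has some finite order~$d$; choosing $m=kd$ makes $k-m\equiv 0\pmod k$ and $(\complement^2)^{kd}=\id$, so the walk returns to $\complement x$ after $kd\ge 1$ steps, placing $\complement x$ on a closed walk of positive length in~$\Acceptor$, and hence in a non-singleton strongly connected component. Thus $\complement x$ is essential. The converse follows by applying this implication with $\complement x$ in place of $x$ to get that $\complement^2 x$ is essential, and then invoking the $\complement^2$-invariance of essentialness.

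The main subtlety is the exponent bookkeeping: each step of the constructed walk raises the $\complement$-exponent by~$2$, so after $k$ steps one arrives at $\complement^{2k+1}u_0$ rather than at~$\complement u_0$, and closing the walk requires looping around the original cycle sufficiently many times (the optimal number being $d/\gcd(k,d)$) for the $\complement$-exponent to stabilise modulo~$d$.
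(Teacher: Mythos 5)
Your argument reinterprets essentialness via the Remark preceding this proposition, namely that $x\in\pSimples$ is essential if and only if $x$ lies in a strongly connected component of $\Acceptor$ containing an edge, equivalently on a closed walk of positive length. Granting that, the mechanics of your construction are correct: $\complement^2$ is a graph automorphism of $\Acceptor$ because it is a $\prefix$-lattice automorphism of the simples, the dual-edge identity $\complement u\meet v=\id\iff\complement^3 u\meet\complement^2 v=\id$ is right, the exponent bookkeeping works, and choosing $m=kd$ (with $d$ the order of $\complement^2$ on $\pSimples$) does close the constructed walk at $\complement x$; the converse via $\complement^2$-invariance is also sound.

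The problem is the appeal to the Remark. The direction you invoke, that an essential element must lie on a cycle, is the nontrivial direction of that equivalence, and it is nowhere proved in the paper (it is stated without proof, and the paper's own argument for this proposition does not pass through it). For a general finite labelled digraph this implication simply fails: a vertex lying strictly between two disjoint cycles, as in the pattern $a\circlearrowleft\;\to b\to\;c\circlearrowleft$, occurs in the middle of arbitrarily long walks without itself lying on any closed walk. Ruling out this configuration in a Garside acceptor graph requires an argument, and the natural tool for such an argument is precisely the reversal identity on which the present proposition rests, so there is a real risk of circularity. By contrast, the paper works directly from the definition of essential: given $w_n|\cdots|w_1|x|y_1|\cdots|y_m\in\calL$ with $n,m>K$, one applies the identity $\complement s\meet t=\id\iff\complement(\complement^{2k+1}t)\meet\complement^{2k+3}s=\id$ letterwise to produce the normal-form word $\complement^{-2m+1}y_m\ldot\cdots\ldot\complement^{-1}y_1\ldot\complement x\ldot\complement^3 w_1\ldot\cdots\ldot\complement^{2n+1}w_n\in\calL$, which has $\complement x$ at position $m+1$ of total length $n+m+1$, with $m>K$ letters before and $n>K$ after. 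That argument is shorter than yours, self-contained, and needs no auxiliary characterisation of essentialness. To make your proof complete you should either replace the appeal to the Remark by this direct manoeuvre, or else supply an independent proof of the ``essential $\Rightarrow$ on a cycle'' implication that does not itself rely on the present proposition.
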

\begin{proof}
  Suppose that $x\in\pSimples$ is essential and that we are given
  $K\in\NN$.  As $x$ is essential there exists a word 
  \[ w_n |\cdots |w_2| w_1| x| y_1| y_2| \cdots |y_m \in \calL \]
  with $n, m > K$.  As for any $s,t\in\Simples$ and any $k\in\ZZ$ one has $\complement s\meet t=\id$ iff $\complement(\complement^{2k+1} t)\meet \complement^{2k+3} s=\id$, the word
  \[ \complement^{-2m+1} y_m \ldot \cdots \ldot \complement^{-3} y_2 \ldot
     \complement^{-1} y_1 \ldot \complement x \ldot \complement^3 w_1 \ldot
     \complement^5 w_2 \ldot \cdots \ldot \complement^{2n+1} w_n \]
  is also in normal form and hence lies in $\calL$.  Therefore
  $\complement x$ is essential.

  Applying the same argument with $\rightcomplement = \complement^{-1}$, we deduce
  that if $\complement x$ is essential then $x$ is essential.
\end{proof}

\begin{lemma} \label{lemma:non-minimal-essential}
  A simple element in $M(k)$ is essential if and only if its normal
  form in $M$ is a length $k$ word of essential simple elements.
  \[ \Ess_{M(k)} = \calL_{\Ess,M}^{(k)} \]
\end{lemma}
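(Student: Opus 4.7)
Both inclusions will use \autoref{lemma:non-minimal-normal-form} and \autoref{lemma:normal-form-non-minimal} as a two-way dictionary between $M$- and $M(k)$-normal forms via chunking of size $k$.

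For $\supseteq$, given $x_1|\cdots|x_k \in \calL_{\Ess,M}^{(k)}$, set $X := x_1\cdots x_k$; since each $x_i \in \pSimples_M$ one has $\sup_M(X) = k$, $\inf_M(X) = 0$, and $X \in \pSimples_{M(k)}$. Given $N$, essentiality of $x_1$ and of $x_k$ in $M$ yields, by truncating essentiality witnesses, words $u_{Nk}|\cdots|u_1|x_1$ and $x_k|v_1|\cdots|v_{Nk}$ in $\calL_M$. Concatenating with $x_1|\cdots|x_k$ gives a word in $\calL_M$, and \autoref{lemma:normal-form-non-minimal} with chunk size $k$ aligned so that $(x_1,\ldots,x_k)$ forms one chunk yields an $M(k)$-normal form word of length $2N+1$ whose letters are each a product of $k$ proper $M$-simples in $M$-normal form, hence lie in $\pSimples_{M(k)}$. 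The middle letter is $X$, so $X \in \Ess_{M(k)}$.

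For $\subseteq$, let $X \in \Ess_{M(k)}$ with witness $W_N|\cdots|W_1|X|Y_1|\cdots|Y_M \in \calL_{M(k)}$ ($N, M$ arbitrarily large). The crux is to show $\sup_M(Z) = k$ and $\inf_M(Z) = 0$ for every $Z$ in the witness, except that $\inf_M(W_N)$ may be positive. The key ingredient is $\Delta_M^{k-\sup_M(Z)} \prefix \complement_{M(k)} Z$ for any $M(k)$-simple $Z$, which follows from the preamble formula $\complement_{M(k)}Z = \complement_M Z \cdot \Delta_M^{k-\sup_M(Z)}$ combined with the Garside commutation $y\Delta_M = \Delta_M\,\tau(y)$ (for some automorphism $\tau$ of $M$) that moves the trailing $\Delta_M$-factor to the left. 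Applied to the junction $X|Y_1$: $\sup_M(X) < k$ gives $\Delta_M \meet Y_1 \prefix \complement_{M(k)}X \meet Y_1 = \id$, forcing $Y_1 = \id$, a contradiction; so $\sup_M(X) = k$, and the analogous arguments at the remaining interior junctions give all the claimed $\sup_M$-values. Then $\sup_M(W_1) = k$ makes $\complement_{M(k)} W_1 = \complement_M W_1$ (with $\inf_M = 0$), so $\complement_M W_1 \meet X = \id$ forces $\inf_M(X) = 0$ (else $W_1 = \Delta_M^k$, not proper); the analogous argument, using that $M$-normal form words can carry $\Delta_M$-letters only at the very start, propagates $\inf_M = 0$ to all $Y_j$ and to $W_j$ for $j < N$.

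To conclude, the $M$-normal form of $X$ is $x_1|\cdots|x_k$ with every $x_i \in \pSimples_M$; applying \autoref{lemma:non-minimal-normal-form} to the witness and stripping the at most $\inf_M(W_N) \le k-1$ leading $\Delta_M$-letters gives a word in $\calL_M$ in which $x_i$ is flanked by at least $k(N-1) + (i-1)$ proper simples on the left and $k(M-1) + (k-i)$ on the right, so each $x_i$ is essential. The main obstacle is establishing the identity $\Delta_M^{k-\sup_M(Z)} \prefix \complement_{M(k)}Z$ (equivalently, $\inf_M(\complement_{M(k)} Z) = k - \sup_M(Z)$), which hinges on the Garside automorphism $\tau$; once that is in hand, everything else amounts to careful bookkeeping with the junction conditions and the fact that $\Delta_M$-letters in an $M$-normal form word sit only at the very start.
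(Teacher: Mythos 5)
Your proof is correct and follows the same overall strategy as the paper's: the backward inclusion by concatenating essentiality witnesses for $x_1$ and $x_k$ and chunking via \autoref{lemma:normal-form-non-minimal}, and the forward inclusion by showing that the $M$-normal form of an $M(k)$-essential element has length exactly $k$ with all letters proper, then transferring an $M(k)$-essentiality witness via \autoref{lemma:non-minimal-normal-form}. Where you diverge slightly is in the forward direction: the paper applies the supremum/infimum argument only to $x$ itself (using essentiality directly) and then merely asserts that the interior letters of the witness also unfold to proper $M$-simples, whereas you carry out the same argument explicitly at every interior junction, which is indeed what is needed to justify that assertion; this is a welcome amount of extra care rather than a different approach. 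The ingredient you flag as the ``main obstacle'' — that $\Delta_M^{\,k-\sup_M(Z)} \prefix \complement_{M(k)} Z$ — is a direct consequence of the preamble formula $\complement_{M(k)} x = \complement_M x \cdot \Delta_M^m$ together with the standard fact (via the Garside automorphism $\tau$) that $\Delta_M^j$ is a left-divisor iff it is a right-divisor, so it is not really an obstacle but rather bookkeeping, exactly as the paper treats it implicitly.
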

\begin{proof}
  First we will show that the $M$-normal form of every
  $M(k)$-essential element is a length $k$ word of $M$-essential
  elements.  So suppose that $x\in\Ess_{M(k)}$ is $M(k)$-essential
  and that $\NF_M(x) = x_1 |x_2 |\cdots |x_\ell$ is the $M$-normal form of
  $x$.
  If $\ell<k$, then $\Delta\prefix \complement_{M(k)}x$ and thus $x\ldot y$ cannot be in normal form for any non-trivial $y\in \Simples_{M(k)}$, contradicting the choice of~$x$.
  Similarly, if $x_1 = \Delta$, then $y\ldot x$ cannot be in normal form for any proper $y\in \pSimples_{M(k)}$, contradicting the choice of~$x$.
  Hence we have $\ell=k$ and $x_i\in\pSimples_M$ for $i=1,\ldots,k$.
  
  As~$x$ is essential, for every $K \in \NN$ there exists a word
  \[ w_{K+1} |\cdots| w_2| w_1|x|y_1| y_2 |\cdots| y_{K+1} \in \calL_{M(k)}. \] 
  By \autoref{lemma:non-minimal-normal-form} we can replace each $w_i$
  and each $y_i$ by their $M$-normal forms to produce a word in
  $M$-normal form.
  \begin{align*}
    \NF_M(w_i) &= w_{i,1} |w_{i,2} |\cdots| w_{i,k} \\
    \NF_M(y_i) &= y_{i,1} |y_{i,2} |\cdots| y_{i,k} 
  \end{align*}
  As all $w_i$ and $y_i$ are proper $M(k)$-simple elements we have
  that, for $i \le K$, the elements $w_{i,j}$ and $y_{i,j}$ are proper $M$-simple
  elements.  Hence
  \begin{align*}
  w_{K,1} |\cdots |w_{K,k}|\cdots|
  w_{1,1} |\cdots |w_{1,k}| 
  x_1 |\cdots |x_k| 
   & y_{1,1} |\cdots |y_{1,k}|\cdots  |y_{K,1} |\cdots| y_{K,k}
  \end{align*}
  is a word in $\calL_M$ and so each $x_i$ is essential.

  It remains to show that every $x_1 |x_2 |\cdots| x_k \in
  \calL_{\Ess,M}^{(k)}$ defines an $M(k)$-essential simple element.
  Using \autoref{lemma:normal-form-non-minimal} it is clear that $x_1
  x_2 \cdots x_k$ is a proper $M(k)$-simple element.  As $x_1$ and
  $x_k$ are essential, for any $K\in\NN$ there exist words
  $w_{Kk} |\cdots| w_2 |w_1 \in \calL_M$ and $y_1| y_2| \cdots| y_{Kk} \in \calL_M$
  such that $w_1 | x_1$ and $x_k | y_1$.  By
  \autoref{lemma:normal-form-non-minimal}, if we let for $i=1,\ldots,K$
  \begin{align*}
    \bar w_i &= w_{ik} w_{ik-1} \cdots w_{(i-1)k + 1} \\
    \bar x   &= x_1 x_2 \cdots x_k \\
    \bar y_i &= y_{(i-1)k+1} y_{(i-1)k+2} \cdots y_{ik}
  \end{align*}
  then 
  \[
  \bar w_K \ldot\cdots\ldot \bar w_2\ldot \bar w_1\ldot \bar x\ldot \bar y_1\ldot \bar y_2\ldot \cdots \ldot\bar y_K
  \]
  is in $M(k)$-normal form.  Moreover, because all $w_i$ and all
  $y_i$ are proper $M$-simple elements, we know that each letter of
  this word is a proper $M(k)$-simple element.  Therefore this word
  is in $\calL_{M(k)}$ and hence $\bar x$ is essential.
\end{proof}

\begin{proposition}
  If $\Ess = \emptyset$ then $M = \NN$.
\end{proposition}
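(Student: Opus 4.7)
The plan is to prove the contrapositive: if $M\neq\NN$, then $\Ess\neq\emptyset$. Since a cancellative atomic monoid generated by a single atom is isomorphic to $\NN$, the hypothesis $M\neq\NN$ is equivalent to $|\Atoms|\ge 2$; in particular $\Delta$ is not itself an atom, and every atom lies in $\pSimples$.

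The strategy is to produce a self-loop in the acceptor graph $\Acceptor$, which automatically places the looped vertex in $\Ess$. Fix any atom $a\in\Atoms$ and let $k\ge 1$ be maximal with $a^k\prefix\Delta$, so that $a^k$ is a non-trivial simple element. I claim both $a^k\in\pSimples$ and $\complement(a^k)\meet a^k=\id$; together these give $a^k\ldot a^k\in\calL^{(2)}$, a self-loop at $a^k$ in $\Acceptor$, and hence $a^k\in\Ess$.

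Both claims reduce to the sub-assertion that the prefixes of $a^k$ in $M$ are exactly $\{a^0,a^1,\ldots,a^k\}$, which I would establish by induction on $k$. Granted this: if $a^k=\Delta$, then every atom (being a prefix of $\Delta=a^k$) is a power of $a$, forcing $\Atoms=\{a\}$ and $M=\NN$, a contradiction; so $a^k\in\pSimples$. Moreover, the only atomic prefix of $a^k$ is then $a$ itself, and $a\prefix\complement(a^k)$ would give $\complement(a^k)=av$, whence $\Delta=a^k\complement(a^k)=a^{k+1}v$ so that $a^{k+1}\prefix\Delta$, contradicting the maximality of $k$. Therefore no atom prefixes both $a^k$ and $\complement(a^k)$, and $\complement(a^k)\meet a^k=\id$.

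The main obstacle is the inductive step of the sub-assertion. For a hypothetical atomic prefix $c\neq a$ of $a^k$, one has $a\meet c=\id$ and $a\join c\prefix a^k$; writing $a\join c=a\cdot e$ and cancelling $a$ gives $e\prefix a^{k-1}$, so the outer induction forces $e=a^j$ and thus $a\join c=a^{j+1}$ for some $j+1\le k$. In the subcase $j+1<k$ the induction applied to $c\prefix a^{j+1}$ forces $c=a$, contradicting $c\neq a$. The delicate boundary subcase $j+1=k$ corresponds to a genuinely different left-factorisation $a^k=cd$ and does not directly reduce. I would close this by a symmetry argument: reversing multiplication yields another Garside monoid on the same underlying set with the same Garside element and the same simple elements, in which the sub-assertion becomes the corresponding statement about right-divisors of $a^k$; combining the two statements with the observation that both the first and last atoms of the reversal-symmetric element $a^k$ equal $a$, and with the balance of $\Delta$, rules out the configuration $a\join c=a^k$ with $c\neq a$.
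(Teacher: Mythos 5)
Your proposal hinges on the sub-assertion that the only left-divisors of $a^k$ are the powers $\id, a, a^2, \ldots, a^k$, and you correctly identify the boundary case $a\join c = a^k$ as the delicate step.  Unfortunately, that sub-assertion is simply false in general, and the boundary case you flag genuinely occurs; the symmetry argument you sketch cannot repair it.  The paper's own \autoref{example:NotEssentiallyTransitive} is a counterexample: in $M=\langle a,b,c\mid ab=c^2,\ bc=a^2,\ ca=b^2\rangle$ with $\Delta=a^3$, the element $a^2$ is a proper simple element with $a^2=bc$, so $b\prefix a^2$ while $b$ is not a power of $a$.  Moreover the reversal trick does not help because $a^2$ simultaneously has the atomic right-divisor $c\ne a$; so both the original and the reversed version of your sub-assertion fail for the same element, despite the fact that $a^2=a\cdot a$ begins and ends with $a$.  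Concretely, in this monoid $k$ is $3$ and $a^k=\Delta\notin\pSimples$, and no power of $a$ yields a self-loop in $\Acceptor$ at all: $\complement a=a^2$ gives $\complement a\meet a=a\neq\id$, and $\complement(a^2)=a$ gives $\complement(a^2)\meet a^2=a\neq\id$.  So the strategy of producing a self-loop at a power of a fixed atom cannot succeed here, even though $\Ess\neq\emptyset$.

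This is a gap in the plan itself, not merely in the exposition: you cannot expect a single atom's powers to carry all the information about the normal-form acceptor, because relations in a Garside monoid routinely make $a^k$ divisible by other atoms.  The paper instead argues globally: if $\Ess=\emptyset$ then $\calL$ must be finite (otherwise the pumping lemma produces a cycle in $\Acceptor$, hence an essential element); passing to the $k$-framing $M(k)$ one may assume every normal-form word has length $1$; then $sa$ is simple for every proper simple $s$ and every atom $a$, so a $\prefix$-maximal proper simple element $s$ satisfies $sa=\Delta$ for every atom $a$, and cancellativity forces all atoms to coincide.  That argument never needs to control the divisor structure of a specific element, which is exactly where your approach runs aground.
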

\begin{proof}
  First note that $\calL$ must be finite:  Otherwise, there would be a word $w \in \calL$ whose length is longer than the number of states in the automaton accepting~$\calL$ whence, by the pumping lemma, there would exist words $x$, $y$, $z$, where $y \ne \epsilon$,
such that $w = x\ldot y\ldot z$ and $x\ldot y \ldot \cdots \ldot y \ldot z \in \calL$ for any number of copies of~$y$; in particular, every letter of~$y$ would be essential in contradiction to the hypothesis.
  
  By \autoref{lemma:non-minimal-essential}, if $\Ess_M$ is empty then $\Ess_{M(k)}$ is empty for all $k$.  Hence, passing to $M(k)$, where $k$ is the length of the longest word contained in $\calL$, we may assume that all the words in $\calL$ have length 1.
  
  For every $s \in \pSimples$ and every $a \in\Atoms$ we have $sa \in \Simples$, as otherwise, $s|a$ would be a word of length $2$ in $\calL$.
  Choosing any $\prefix$--maximal element $s \in \pSimples$, we have $s a = \Delta$ for all atoms $a \in \Atoms$, hence, by cancellativity, all the atoms must be equal.
  In other words there is a single atom and so $M = \NN$.
\end{proof}

\begin{proposition}
  If $\Ess \ne \emptyset$ then $|\Ess| > 1$.
\end{proposition}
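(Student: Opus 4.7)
The plan is to argue by contradiction: suppose $\Ess = \{x\}$ consists of a single proper simple element $x$. By \autoref{prop:complement-essential}, $\complement x$ is also essential, so $\complement x = x$, which gives $x^2 = \Delta$ and in particular $\complement x \meet x = x \neq \id$. Hence the two-letter word $x \ldot x$ fails to lie in $\calL$.

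I will derive a contradiction by producing a word in $\calL$ that is forced to contain $x \ldot x$ as a subword. The key observation is that, because $M$ is a Garside monoid, $\pSimples$ is finite. For every non-essential element $y \in \pSimples$, unpacking the negation of the definition of essentiality yields some $K_y \in \NN$ such that in any word $y_1 \ldot \cdots \ldot y_n \in \calL$ with $y_j = y$ one has $j \le K_y$ or $j \ge n - K_y$. Set $K_0 = \max\{ K_y : y \in \pSimples \setminus \Ess \}$ (or $K_0 = 0$ if this set is empty); this maximum is finite because $\pSimples \setminus \Ess$ is finite.

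Now invoke the essentiality of $x$ with $K = K_0 + 2$: there exists a word $x_1 \ldot x_2 \ldot \cdots \ldot x_n \in \calL$ with $x_i = x$ for some index $i$ satisfying $K_0 + 2 < i < n - K_0 - 2$. Then the adjacent positions $i-1$ and $i+1$ are both strictly between $K_0$ and $n - K_0$, so by the choice of $K_0$ neither $x_{i-1}$ nor $x_{i+1}$ can be a non-essential proper simple element; hence both belong to $\Ess$ and therefore equal $x$. But then $x_i \ldot x_{i+1} = x \ldot x$ occurs as a subword of a word in $\calL$, contradicting the conclusion of the first paragraph.

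The only nontrivial point — and the main obstacle worth singling out — is the finiteness argument producing the uniform bound $K_0$, which depends on $\pSimples$ being finite (the content of the Garside hypothesis) together with a careful reading of the negated definition of ``essential''. Everything else is a direct manipulation of definitions combined with \autoref{prop:complement-essential} and the normal-form condition $\complement x_i \meet x_{i+1} = \id$.
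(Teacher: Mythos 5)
Your proof is correct and reaches the same pair of incompatible facts as the paper's argument (namely $\complement x \meet x = \id$, from $x$ appearing in two consecutive positions of a word in $\calL$, versus $\complement x \meet x = x \ne \id$, from \autoref{prop:complement-essential} forcing $\complement x = x$), but you get to the first fact by a different route. The paper takes a long word in $\calL$ containing $x$, applies the pumping lemma to extract a nonempty pumpable factor $y$, observes that every letter of $y$ is essential and hence equal to $x$, and concludes $x|x$. You instead negate the definition of essentiality for each of the finitely many non-essential proper simple elements to obtain a uniform bound $K_0$, then invoke essentiality of $x$ at threshold $K_0+2$ to place $x$ at a position whose two neighbours are deep enough in the middle that they cannot be non-essential, hence both equal $x$. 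The two mechanisms use the same finiteness input (the finiteness of $\pSimples$, equivalently of the state set of the acceptor), but yours avoids the pumping lemma in favour of a hands-on extraction, and you also reverse the order of the two deductions, deriving $\complement x = x$ first. Both are clean; yours is marginally more self-contained in that it does not appeal to automata theory beyond the definition of $\calL$, while the paper's is slightly shorter once the pumping lemma is taken for granted.
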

\begin{proof}
  Suppose $\Ess = \{ s \}$.

  As $s$ is essential there exist arbitrarily long words containing
  $s$.  So there is a word $w \in \calL$ whose length is longer than
  the number of states in the automaton accepting~$\calL$ hence, by
  the pumping lemma, there exist words $x$, $y$, $z$, where $y \ne \epsilon$,
  such that $w = x\ldot y\ldot z$ and $x\ldot y \ldot \cdots \ldot y \ldot z \in \calL$ for any number of copies of~$y$.

  This means that every letter of $y$ is essential.  Therefore $y$ is
  a positive power of $s$ and so $s|s$, that is, $\complement s \meet s
  = \id$.

  By \autoref{prop:complement-essential}, $\complement s$ is
  essential, hence $\complement s = s$.  Therefore, $\complement s
  \meet s = s$, which is a contradiction.
\end{proof}

\begin{definition}
  Say that the language $\calL$, or simply~$M$, is \emph{essentially $k$-transitive}
  if $\Ess \ne \emptyset$ and for all $x, y \in \Ess$ there exists a word of
  length less than or equal to $k+1$ in~$\calL_\Ess$ which starts with~$x$ and ends with~$y$.

  Say that $\calL$ is essentially transitive if it is essentially
  $k$-transitive for some~$k$.
\end{definition}


\begin{remark} 
The language $\calL$ is essentially transitive if and only if the acceptor graph~$\Acceptor$ has exactly one non-singleton strongly connected component (that is, exactly one strongly connected component containing at least one edge).

In this case, the language $\calL$ is essentially $k$-transitive iff the diameter of the non-singleton strongly connected component of~$\Acceptor$ is at most~$k$.

\autoref{prop:complement-essential} implies that if the language~$\calL$ is essentially $k$-transitive, then one has $k\ge 2$.
\end{remark}

\begin{example}\label{example:NotEssentiallyTransitive}
 The monoid $M = \langle a,b,c \mid  ab=c^2, bc=a^2, ca=b^2 \rangle$ is a Garside monoid with Garside element $\Delta = a^3 = b^3 = c^3 = abc = bca = cab$.  The lattice of simple elements and the acceptor for the language~$\calL_M$ are shown in \autoref{F:NotEssentiallyTransitive}.
 
 Every proper simple element of~$M$ is essential, but~$M$ is not essentially transitive.
  \begin{figure}
  \hfill
  \parbox[b]{0.4\textwidth}{
    \begin{tikzpicture}[->,auto,scale=1.5, semithick]
      \node (1)  at (1,0) {$\id$};
      \node (b)  at (0,1) {$b$};
      \node (a)  at (1,1) {$a$};
      \node (c)  at (2,1) {$c$};
      \node (a2) at (0,2) {$a^2$};
      \node (b2) at (1,2) {$b^2$};
      \node (c2) at (2,2) {$c^2$};
      \node (D)  at (1,3) {$\Delta$};

      \path (1)  edge node {$a$} (a);
      \path (1)  edge node {$b$} (b);
      \path (1)  edge node [below right] {$c$} (c);
      \path (a)  edge node [pos=0.3, right] {$a$} (a2);
      \path (a)  edge node [pos=0.3, left]{$b$} (c2);
      \path (b)  edge node [pos=0.3, left]{$b$} (b2);
      \path (b)  edge node [left]{$c$} (a2);
      \path (c)  edge node [pos=0.3, right] {$c$} (c2);
      \path (c)  edge node [pos=0.3, right] {$a$} (b2);
      \path (a2) edge node {$a$} (D);
      \path (b2) edge node {$b$} (D);
      \path (c2) edge node [above right] {$c$} (D);
     
      \node at (1,-1) {Hasse diagram};
    \end{tikzpicture}
  } \hfill
  \parbox[b]{0.46\textwidth}{
    \begin{tikzpicture}[->,auto,scale=1.5, semithick]
      \node [red] (b)  at (0,0) {$b$};
      \node [red] (c)  at (0,2) {$c$};
      \node [red] (a)  at (1,1) {$a$};
      \node [blue] (c2) at (2,0) {$c^2$};
      \node [blue] (b2) at (2,2) {$b^2$};
      \node [blue] (a2) at (3,1) {$a^2$};

      \path (a) edge [red] node [below left] {$c$} (c);
      \path (c) edge [red] node [left]{$b$} (b);
      \path (b) edge [red] node [above left] {$a$} (a);
      \path (a2) edge [blue] node [above right] {$b^2$} (b2);
      \path (b2) edge [blue] node [right]{$c^2$} (c2);
      \path (c2) edge [blue] node [below right] {$a^2$} (a2);
      \path (a2) edge [bend left=90] node [below right] {$b$} (b);
      \path (a2) edge [bend right=90] node [above right] {$c$} (c);
      \path (b2) edge node [right]{$a$} (a);
      \path (b2) edge node [above]{$c$} (c);
      \path (c2) edge node [right] {$a$} (a);
      \path (c2) edge node [below right] {$b$} (b);

      
      \node at (1,-1.5) {$\Acceptor$};
    \end{tikzpicture}
  } \hfill{}
  \caption{The lattice of simple elements and the acceptor for the regular language of words in normal form for the monoid from \autoref{example:NotEssentiallyTransitive}; the strongly connected components of the acceptor are coloured.}
  \label{F:NotEssentiallyTransitive}
  \end{figure}
\end{example}

\begin{proposition}
  For any pair of Garside monoids $G$ and $H$, each containing at
  least one proper simple element, their free product amalgamated over their Garside elements,
  $G*_{\Delta_G = \Delta_H} H$, is a Garside monoid with Garside element $\Delta_G = \Delta_H$ for which
  every proper simple element is essential and whose language of
  normal forms is essentially $2$-transitive.
\end{proposition}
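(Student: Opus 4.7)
The plan is to first describe the Garside structure of $M = G *_{\Delta_G = \Delta_H} H$ and characterise when a word on proper simple elements is in $M$-normal form, then deduce essentiality and essential $2$-transitivity combinatorially. I would begin by establishing that $\Simples_M = \Simples_G \union \Simples_H$ with $\Simples_G \intersect \Simples_H = \{\id, \Delta\}$, the point being that a product $gh$ with $g \in \pSimples_G$ and $h \in \pSimples_H$ is no longer a prefix of $\Delta$ in the amalgamated product. From this, I would deduce that the lattice of $M$-divisors splits according to factors: for $x,y$ in the same factor, meets and joins are computed inside that factor; for $x \in \pSimples_G$ and $y \in \pSimples_H$, any common left divisor must lie in $\Simples_G \intersect \Simples_H = \{\id, \Delta\}$, forcing $x \meet y = \id$ and $x \join y = \Delta$. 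In particular, $\complement_M$ restricted to $\pSimples_G$ coincides with $\complement_G$, and analogously for $H$.

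It then follows that a word $x_1 | x_2 | \cdots | x_n$ of proper simple elements is in $M$-normal form precisely when, for each~$i$, either $x_i$ and $x_{i+1}$ lie in different factors (in which case $\complement_M x_i \meet x_{i+1} = \id$ holds automatically, since $\complement_M x_i$ lies in the same factor as $x_i$) or $x_i | x_{i+1}$ is in normal form in whichever of~$G$ or~$H$ contains both. Fixing $g \in \pSimples_G$ and $h \in \pSimples_H$, which exist by hypothesis, essentiality of an arbitrary $x \in \pSimples_M$, say $x \in \pSimples_G$, follows from the fact that the alternating word $h|g|h|g|\cdots|h|x|h|g|h|\cdots|g|h$ with $K$ letters on each side of~$x$ is automatically in $\calL_M$; the case $x \in \pSimples_H$ is analogous. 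For essential $2$-transitivity, given $x,y \in \Ess_M = \pSimples_M$, either $x$ and $y$ lie in different factors and $x|y$ is already a word of length $2$ in $\calL_\Ess$ connecting them, or they lie in the same factor and $x|z|y$ is in $\calL_\Ess$ for any choice of $z$ in the other factor.

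The main obstacle is the first step, namely establishing that $(M,\Delta)$ is a Garside structure whose set of simple elements is exactly $\Simples_G \union \Simples_H$. The subsequent lattice-theoretic arguments all hinge on an explicit description of divisibility in the amalgamated product, and justifying this description rigorously requires either a direct normal-form argument for monoid amalgamations (using that every element has a unique alternating factorisation modulo the single identification $\Delta_G = \Delta_H$, from which cancellativity, atomicity, and the lattice property of the divisor order can be read off) or an appeal to existing results on amalgamations of Garside monoids along a common Garside element. Once this structural step is in place, the verification of essentiality and of essential $2$-transitivity reduces to the straightforward combinatorial observations sketched above.
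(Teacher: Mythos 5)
Your proposal is correct and takes essentially the same approach as the paper. The paper likewise appeals to existing results (Dehornoy--Paris and Froehle) for the fact that the amalgamated product is Garside with proper simple elements the disjoint union of those of $G$ and $H$, and then makes exactly the combinatorial observations you make: $u|v$ is automatically in normal form when $u,v$ lie in different factors, and $u|w|v$ works when they lie in the same factor, for any $w$ in the other factor.
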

\begin{proof}
  The amalgamated product $G*_{\Delta_G = \Delta_H} H$ is a Garside
  monoid \cite[Prop.~5.3]{DehornoyParis99}\cite{Froehle07} whose set of
  proper simple elements is the disjoint union of the proper simple
  elements of $G$ and $H$.%

  Now consider $u,v \in \pSimples$.
  If $u$ and $v$ lie in different factors then $u|v$ is in normal form.
  If $u$ and $v$ lie in the same factor, then choosing any proper simple
  element $w$ from the other factor yields a word $u| w| v$ that is in
  normal form.
\end{proof}

\begin{lemma}\label{lemma_essential_transitivity_framed}
  If the language of normal forms of $M(k)$ is essentially transitive
  then the language of normal forms of $M$ is essentially transitive
\end{lemma}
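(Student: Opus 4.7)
The plan is to lift essential transitivity from $M(k)$ back to $M$ by combining the identification $\Ess_{M(k)} = \calL_{\Ess,M}^{(k)}$ from \autoref{lemma:non-minimal-essential} with the expansion of $M(k)$-normal forms into $M$-normal forms given by \autoref{lemma:non-minimal-normal-form}. Given $x, y \in \Ess_M$, the scheme is: (i) extend $x$ and $y$ to essential $M(k)$-simple elements $X$ and $Y$ whose $M$-normal forms start with $x$ and end with $y$ respectively; (ii) connect $X$ to $Y$ by a short word in $\calL_{\Ess,M(k)}$ using essential transitivity of $M(k)$; (iii) expand each letter to its $M$-normal form, obtaining a word in $\calL_{\Ess,M}$ that starts with $x$ and ends with $y$. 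Non-emptiness of $\Ess_M$ follows immediately: essential transitivity of $M(k)$ gives $\Ess_{M(k)} \neq \emptyset$, so by \autoref{lemma:non-minimal-essential} one has $\calL_{\Ess,M}^{(k)} \neq \emptyset$, and hence $\Ess_M \neq \emptyset$.

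The key technical step is (i). Here I would invoke the SCC characterization of essentiality from the remark following the definition: $x$ lies in some non-singleton strongly connected component $C_x$ of the acceptor $\Acceptor_M$. Because $C_x$ is strongly connected and contains at least one edge, every vertex of $C_x$ has an out-neighbour in $C_x$ (trivially a self-loop if $|C_x|=1$; otherwise the first edge of any path inside $C_x$ from that vertex to some different vertex of $C_x$). Iterating this observation produces a path $x \to v_1 \to \cdots \to v_{k-1}$ of length $k-1$ in $\Acceptor_M$ staying inside $C_x \subseteq \Ess_M$, and the word $x \ldot v_1 \ldot \cdots \ldot v_{k-1}$ it realises lies in $\calL_{\Ess,M}^{(k)}$. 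By \autoref{lemma:non-minimal-essential} this word is the $M$-normal form of an essential $M(k)$-simple element $X$ starting with $x$. The symmetric argument, applied to in-edges rather than out-edges, produces an essential $M(k)$-simple element $Y$ whose $M$-normal form ends with $y$.

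Finally, let $N$ be such that $M(k)$ is essentially $N$-transitive. By hypothesis, there is a word $X \ldot Z_1 \ldot \cdots \ldot Z_{l-1} \ldot Y$ in $\calL_{\Ess,M(k)}$ with $l \le N$. Applying \autoref{lemma:non-minimal-normal-form} to replace each letter by its $M$-normal form yields a word in $\calL_M$ of length $(l+1)k \le (N+1)k$ that begins with $x$ and ends with $y$, and \autoref{lemma:non-minimal-essential} guarantees that every letter of this expansion lies in $\Ess_M$. The expanded word therefore witnesses essential $((N+1)k-1)$-transitivity of $M$, completing the argument. The main obstacle is the path construction in the second paragraph: certifying essentiality of the intermediate letters $v_j$ appearing after $x$ forces one to use the SCC structure of $\Acceptor_M$, since a direct argument from the definition of essentiality would have to contend with the fact that the particular letters following $x$ in an arbitrarily long normal form depend on the choice of word.
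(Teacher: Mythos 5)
Your proof is correct and follows the paper's approach: pass between $\Ess_M$ and $\Ess_{M(k)}$ via \autoref{lemma:non-minimal-essential}, use essential transitivity of $M(k)$, and expand back via \autoref{lemma:non-minimal-normal-form}; the only implementation difference is that you pad $x$ on the right and $y$ on the left (so the expanded word already starts with $x$ and ends with $y$), whereas the paper pads in the opposite directions and so implicitly truncates to a factorial subword at the end. Incidentally, invoking the SCC characterisation is not ``forced'' as you suggest: taking $K$ sufficiently large in the definition of essentiality already places $x$, together with the $k-1$ letters adjacent to it in a witnessing word, arbitrarily far from both ends, which certifies those adjacent letters as essential directly.
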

\begin{proof}
  As $\Ess_{M(k)}\ne\emptyset$, we have $\Ess_M\ne\emptyset$ by \autoref{lemma:non-minimal-essential}.
  Let $x,y \in \Ess_M$.  Since $x$ and~$y$ are essential, there exist elements $x_1,\ldots,x_{k-1}$ and 
  $y_2,\ldots,y_k$ in~$\Ess_M$ such that $ x_1 \ldot \cdots\ldot x_{k-1} \ldot x$ and
  $y \ldot y_2 \ldot \cdots \ldot y_k$ are in normal form.
  By \autoref{lemma:non-minimal-essential}, $\bar x = x_1 \cdots x_{k-1} x$ and $\bar y = y y_2 \cdots y_k$ are elements of~$\Ess_{M(k)}$.
  Now as the language of normal forms in $M(k)$ is
  essentially transitive, we can find a path connecting~$\bar x$ to~$\bar y$ and then, by \autoref{lemma:non-minimal-normal-form},
  taking the $M$-normal form of each letter gives a path in~$\Acceptor_M$ from~$x$ to~$y$.
\end{proof}

\begin{remark}
The converse of \autoref{lemma_essential_transitivity_framed} does not hold.  For example, consider the monoid $M=\langle a, b \st a^2=b^2\rangle^+$ (see \autoref{example:aa=bb}).
$M$ is essentially transitive, but $M(2)$ is not:  We have $\Ess_M = \{a,b\}$ and $a|b$ as well as $b|a$.  However, $\Ess_{M(2)} = \{ab,ba\}$ and $\calL_{M(2)} = (ab)^* \cup (ba)^*$.
\end{remark}

\begin{lemma}
  If the language of normal forms of $M$ is essentially transitive then
  for all~$k_0$ there exists $k \ge k_0$ such that the language of
  normal forms of $M(k)$ is essentially transitive.
\end{lemma}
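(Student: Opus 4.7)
The plan is to exploit a directed cycle inside the (unique) non-singleton strongly connected component of~$\Acceptor_M$, to pick $N\ge k_0$ coprime to the cycle length, and then to translate essential transitivity from $M$ to $M(N)$ using \autoref{lemma:non-minimal-normal-form} and \autoref{lemma:non-minimal-essential}.

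Since $M$ is essentially transitive, $\Ess_M$ induces a strongly connected subgraph of~$\Acceptor_M$, and by the earlier results $|\Ess_M|\ge 2$, so this subgraph contains a directed cycle~$\gamma$ of edge-length $C\ge 1$ through some vertex $w\in\Ess_M$. I would then choose $N\ge k_0$ with $\gcd(N,C)=1$; for instance, any sufficiently large prime not dividing~$C$ will do. To verify essential transitivity of $M(N)$, let $\bar x,\bar y\in\Ess_{M(N)}$; by \autoref{lemma:non-minimal-essential} these correspond to words $x_1|\cdots|x_N$ and $y_1|\cdots|y_N$ in $\calL_{\Ess,M}^{(N)}$.

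By essential transitivity of~$M$ there are words in $\calL_{\Ess,M}$ realising walks from $x_N$ to~$w$ and from~$w$ to~$y_1$ of some edge-lengths $\alpha\ge 0$ and $\beta\ge 0$, respectively. Splicing $k$ copies of~$\gamma$ at~$w$ produces a word in $\calL_{\Ess,M}$ realising a walk from $x_N$ to $y_1$ of edge-length $\alpha+kC+\beta$, for every $k\ge 0$ (the normal-form condition at the junctions $c_C|w$ and $w|c_2$ comes for free from~$\gamma$ itself being in normal form). Since $\gcd(C,N)=1$, B\'ezout's identity lets me pick $k$ arbitrarily large with $\alpha+kC+\beta\equiv 1\pmod N$; writing $\alpha+kC+\beta = mN+1$ for such a~$k$, this supplies a word $x_N|z_1|\cdots|z_{mN}|y_1$ in $\calL_{\Ess,M}$ with $m\ge 0$.

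Prepending $x_1|\cdots|x_{N-1}$ and appending $y_2|\cdots|y_N$ then yields a length-$(m+2)N$ word in $\calL_{\Ess,M}$. By \autoref{lemma:normal-form-non-minimal}, regrouping its letters into consecutive blocks of~$N$ yields a word in $\calL_{M(N)}$ of the form $\bar x|\bar z_1|\cdots|\bar z_m|\bar y$, and \autoref{lemma:non-minimal-essential} implies that every block $\bar z_i$ lies in $\Ess_{M(N)}$. Hence $\bar x$ and~$\bar y$ are connected by a walk through essentials in $\Acceptor_{M(N)}$, and $M(N)$ is essentially transitive, as required. The main bookkeeping is the translation between $M$- and $M(N)$-normal forms, which is handled by the two cited lemmas; the step making the walk length land in the residue class $1\pmod N$ is then a routine application of B\'ezout's identity.
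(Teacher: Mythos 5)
Your proof is correct and takes essentially the same route as the paper's: both exploit a directed cycle inside the essential subgraph of $\Acceptor_M$ and use B\'ezout's identity to adjust the walk length modulo $N$ to a value that permits regrouping into $N$-blocks, with \autoref{lemma:non-minimal-normal-form}, \autoref{lemma:normal-form-non-minimal} and \autoref{lemma:non-minimal-essential} handling the translation between $M$ and $M(N)$ (the paper simply allows the auxiliary cycle to depend on the pair $(x,y)$ and chooses $k$ coprime to the finitely many resulting cycle lengths, whereas you fix a single cycle up front, a slightly cleaner variant of the same argument).
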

\begin{proof}
  First note that $M(k)$ is essentially transitive if and only if for
  each pair $x$, $y$ of $M$-essential elements there exists a
  path $x|x_1|x_2|\cdots|x_\ell|y$ such that $\ell$ is a multiple of $k$.

  For each pair $x$, $y$ of $M$-essential elements choose
  $z\in\Ess_M$ together with paths $x|x_1|x_2|\cdots|x_\ell|z$,
  $z|z_1|z_2|\cdots|z_m|z$ and $z|y_1|y_2|\cdots|y_n|y$.

  As the set of essential elements is finite, we can choose an integer $k\ge k_0$ such that, for each pair $x$, $y$ the integer $m$ is coprime to $k$.
  Then for each pair $x$,~$y$ there exists $p$ such that $\ell + pm + n = 0 \mod{k}$.
\end{proof}

\begin{definition}
For an integer $k>0$, a word $x_1|\cdots|x_k\in\calL^{(k)}$ is called \emph{rigid}, if the word $x_k|x_1$ is in normal form.
Let $\calL_\Rig^{(k)}$ denote the set of rigid words in~$\calL^{(k)}$.
\end{definition}

\begin{theorem}[{\cite[Proposition~4.1]{Caruso13}}] \label{PercentageOfRigid}
  If the language $\calL$ is essentially transitive then, for sufficiently large~$\ell$, the proportion of rigid elements in the ball of radius $\ell$, that is in $\Union_{k\le \ell}\calL^{(k)}$, is bounded below by a positive constant.
\end{theorem}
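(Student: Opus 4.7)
The plan is to map essential normal form words to nearby rigid words via a short ``closing'' bridge, and then to compare $\calL_\Ess$ with $\calL$ by exploiting the fact that non-essential simple elements can only appear near the ends of a normal form word.

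First I would fix, once and for all, a constant $k_0$ from the essential transitivity hypothesis and, for each ordered pair $(x,y) \in \Ess \times \Ess$, a bridge $p_{y,x}$ of length at most $k_0 - 1$ with the property that $y \ldot p_{y,x} \ldot x \in \calL_\Ess$. Given any $w = x_1|\cdots|x_k \in \calL_\Ess^{(k)}$, set $\phi(w) = x_1 \ldot \cdots \ldot x_k \ldot p_{x_k,x_1}$. Then $\phi(w)$ is a normal form word of length $k+j$ for some $j \in \{0,\ldots,k_0-1\}$, and since its last letter is followed in normal form by $x_1$, it lies in $\calL_\Rig^{(k+j)}$. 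The map $\phi$ is injective because $w$ is the length-$k$ prefix of $\phi(w)$. Summing the resulting inequality $|\calL_\Ess^{(k)}| \le \sum_{j=0}^{k_0-1}|\calL_\Rig^{(k+j)}|$ over $0 \le k \le \ell$ and exchanging the order of summation yields
\[
 \sum_{k' \le \ell + k_0 - 1} \big|\calL_\Rig^{(k')}\big| \;\ge\; \frac{1}{k_0} \sum_{k \le \ell} \big|\calL_\Ess^{(k)}\big|.
\]

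It remains to replace $\calL_\Ess$ by $\calL$. For each $s \in \pSimples \setminus \Ess$ the definition of non-essential provides a constant $K_s$ beyond which $s$ cannot appear strictly inside any normal form word; letting $K = \max K_s$ over non-essential $s$ (finite because $\pSimples$ is finite), every $w = x_1|\cdots|x_k \in \calL^{(k)}$ with $k > 2K$ is determined by its prefix of length $K$, a middle factor in $\calL_\Ess^{(k-2K)}$, and its suffix of length $K$; hence $|\calL^{(k)}| \le |\pSimples|^{2K} \cdot |\calL_\Ess^{(k-2K)}|$. Combined with the previous display, and using that both $|\calL^{(k)}|$ and $|\calL_\Ess^{(k)}|$ have growth of the form $\Theta(k^{q}\beta_M^{k})$ (cf.\ \autoref{N:GrowthRates}), shifting the summation index by the bounded amounts $k_0 - 1$ and $2K$ changes partial sums by only a bounded multiplicative factor. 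This produces the desired inequality $\sum_{k' \le \ell}|\calL_\Rig^{(k')}| \ge c \sum_{k' \le \ell}|\calL^{(k')}|$ for a positive constant $c$ and all sufficiently large $\ell$.

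The main obstacle I anticipate is the final index-shift comparison: one needs a clean elementary lemma stating that partial sums of a sequence of the form $\Theta(k^{q}\beta^{k})$ change by at most a constant factor under a bounded shift of the summation range, which should be recorded as a stand-alone statement and then applied twice. A minor subtlety, to be dispatched early, is the observation that every letter of a rigid word is necessarily essential, which follows because iterating the rigidity condition $x_k|x_1$ produces a normal form sentence in which each $x_i$ occurs arbitrarily far from both ends.
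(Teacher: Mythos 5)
Your proposal is correct and follows essentially the same strategy as the paper: extend words of $\calL_\Ess$ by a short bridge to produce rigid words injectively, then pass from $\calL_\Ess$ to $\calL$ by observing that non-essential letters are confined to a bounded neighbourhood of the endpoints. The only cosmetic differences are that the paper compares cumulative counts via blocked sums $a_d, b_d$ rather than invoking an explicit index-shift lemma, and bounds the non-essential heads and feet by their number (at most $K$, by a pumping argument) so that the restriction to the essential middle is at most $K^2$-to-one, rather than bounding their lengths as you do; both routes give the same conclusion.
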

\begin{proof}
A word $x_1|\cdots|x_k \in \calL$ is rigid if and only if the word $x_1|\cdots|x_k\ldot x_1$ is in normal form, that is, traces out a cycle in $\Acceptor$.

Assume that the language~$\calL$ is essentially $D$-transitive.  Then, for any word
$w = x_1|\cdots|x_\ell \in \calL_{\Ess}^{(\ell)}$, there exist an integer $c<D$ and $x_{\ell+1},\ldots,x_{\ell+c} \in \pSimples$ such that
$x_1|\cdots|x_\ell|x_{\ell+1}|\cdots|x_{\ell+c}|x_1 \in \calL^{(\ell)}$.
Hence, any $w \in \calL_{\Ess}^{(\ell)}$ can be extended to a rigid word $w' \in \calL_{\Rig}^{(\ell')}$ for some $\ell'\in\{\ell,\ldots,\ell+D-1\}$.
For fixed $\ell$, the map $w\mapsto w'$ is clearly injective, so we have
$\Big|\Union_{k=\ell}^{\ell+D-1} \calL_{\Rig}^{(k)}\Big| \ge \big|\calL_{\Ess}^{(\ell)}\big|$.

Now let
$
   a_d := \Big| \Union_{k=(d-1)D+1}^{dD} \calL_{\Rig}^{(k)} \Big|
   \quad\text{ and }\quad
   b_d := \Big| \Union_{k=(d-1)D+1}^{dD} \calL_{\Ess}^{(k)} \Big|
   \;.
$
The language~$\calL_{\Ess}$ is factorial and, as there are essential elements, infinite.  Thus, by~\cite[Corollary~4]{Shur08}, we have $\big|\calL_{\Ess}^{(k)}\big|\in\Theta(k^{q'}{\beta'}^k)$ with $\beta'\ge 1$.
In particular, $a_d, b_d\ge 1$ holds for all values of~$d$.
Moreover, as $\sum_{k=\ell}^{\ell+D-1} k^{q'} {\beta'}^k \in \Theta\big(\ell^{q'} {\beta'}^\ell\big)$, we have $\Theta(a_d) = \Theta(b_d) = \Theta(b_{d+1}) =  \Theta\big(d^{q'} {\beta'}^{dD}\big)$.  Hence, there exists a constant $C>0$ such that we have $a_d > \frac1C b_d$ and $b_{d+1} < C b_d$ for sufficiently large~$d$.
Hence,
\[
  \frac{\,\Big|\Union_{k=0}^\ell \calL_{\Rig}^{(k)}\Big|\,}{\,\Big|\Union_{k=0}^\ell \calL_{\Ess}^{(k)}\Big|\,}
  \ge \frac{a_1+\cdots+a_{\lfloor\frac{\ell}{D}\rfloor}}
           {1+b_1+\cdots+b_{\lfloor\frac{\ell}{D}\rfloor}+b_{\lfloor\frac{\ell}{D}\rfloor+1}}
  > \frac1{2C^2}
\]
holds for sufficiently large $\ell$.

So it remains to show that there exists a constant~$K'$ such that, for sufficiently large~$\ell$, one has $\Big|\Union_{k=0}^\ell \calL_{\Ess}^{(k)}\Big| \ge \frac{1}{K'} \Big|\Union_{k=0}^\ell \calL^{(k)}\Big|$.

For $w=x_1|\cdots|x_\ell$, let~$\alpha(w)$ be the minimal integer satisfying $x_{\alpha(w)}\in\Ess$, or~$0$ if no such integer exists.  Similarly, let~$\omega(w)$ be the maximal integer satisfying $x_{\omega(w)}\in\Ess$, or~$0$ if no such integer exists.
Clearly, if~$\alpha(w)>0$ and $\omega(w)>0$ hold, then one has $x_{\alpha(w)}|\cdots|x_{\omega(w)}\in \calL_{\Ess}$.
By the pumping lemma, $\alpha(w)>0$ and $\omega(w)>0$ hold for all but a finite number~$\widetilde{K}$ of words~$w$.
Moreover, there exists a constant~$K$ such that for any $x\in\Ess$ both, the number of words $y_1|\cdots|y_r$ such that $y_1,\ldots,y_r\in\pSimples\setminus\Ess$ and $y_r|x$ hold, and the number of words $z_1|\cdots|z_s$ such that $z_1,\ldots,z_s\in\pSimples\setminus\Ess$ and $x|z_1$ hold, are bounded above by~$K$.
Hence, for words~$w$ satisfying $\alpha(w)>0$ and $\omega(w)>0$, the map $x_1|\cdots|x_\ell \mapsto x_{\alpha(w)}|\cdots|x_{\omega(w)}$ is at most $K^2$-to-$1$.  Thus, one has
$\Big|\Union_{k=0}^\ell \calL^{(k)}\Big| \le \widetilde{K} + K^2 \Big|\Union_{k=0}^\ell \calL_{\Ess}^{(k)}\Big|$, proving the claim.
\end{proof}

\comment{\begin{itemize}
\itemS{Can we extend this to Zappa-Szép products of essentially transitive monoids?  I guess all this would need is a description of the rigid elements of a $\zs$-product in terms of those of the factors.  I don't think we considered that in the previous paper.}
\end{itemize}}

\begin{example} \label{example:aa=bb}
  Under the hypotheses of \autoref{PercentageOfRigid}, it is not necessarily true that the percentage of rigid elements in $\calL^{(k)}$ is bounded below by a positive constant for sufficiently large values of~$k$.  In other words, it is possible for the sphere of radius $k$ to contain an arbitrarily small proportion of rigid elements.

As an example, consider the monoid $M=\langle a, b \st a^2 = b^2 \rangle^+$ with Garside element $\Delta = a^2 = b^2$; see \autoref{F:aa=bb}.
  \begin{figure}
  \hfill
  \parbox[c]{0.45\textwidth}{
    \begin{xy}
      0;<3em,0em>:<0em,3em>::
      (1,-0.6)*+{\text{Hasse diagram}};
      (1,0)*+{\id}="e";
      (0,1)*+{a}="a";
      (2,1)*+{b}="b";
      (1,2)*+{\Delta}="aa";
      {\ar@{->}^{a} "e";"a"};
      {\ar@{->}_{b} "e";"b"};
      {\ar@{->}^{a} "a";"aa"};
      {\ar@{->}_{b} "b";"aa"};
    \end{xy}
  } \hfill
  \parbox[c]{0.45\textwidth}{
    \begin{xy}
      0;<3em,0em>:<0em,3em>::
      (0.5,-0.85)*+{\Acceptor};
      (0,0)*+{a}="a";
      (1,0)*+{b}="b";
      {\ar@/^/^{b} "a";"b"};
      {\ar@/^/^{a} "b";"a"};
    \end{xy}
  } \hfill {}
  \caption{The lattice of simple elements and the acceptor for the regular language of words in normal form for the monoid from \autoref{example:aa=bb}.}
  \label{F:aa=bb}
  \end{figure}
  Clearly, the language $\calL_M$ is essentially transitive.
  (And, moreover, every proper simple element is essential.)
  However, for odd $k=2m+1$, one has
  $\calL_M^{(k)} = \{ a\ldot(b\ldot a)^m , b\ldot(a\ldot b)^m \}$, and so there are no rigid elements of length~$k$.
  \commV{The only problem is that the language of rigid words is not factorial, and thus its growth is not necessarily of the form $\Theta(k^r \rho^k)$; we only have $O(...)$, not $\Theta(...)$.  The example shows why: there can be periodicity effects.  It should be possible to exclude these if the acceptor is primitive, or something like that; in that case, we could say something about $\calL^{(k)}$.}
\end{example}

%
%

\medskip

Being essentially transitive is a rather strong property.  In
particular, it implies that the monoid in question cannot be
decomposed as a \ZS{} product.  As we shall see in
\autoref{example:aa=bb:2-framed}, the converse of this does not hold,
i.e.\ not all monoids that are $\zs$-indecomposable are essentially
transitive.

\begin{proposition}\label{product-not-transitive}
  If $M = G \zs H$ for two submonoids~$G$ and~$H$, then~$M$ is not essentially transitive.
\end{proposition}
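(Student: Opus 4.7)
The plan is to exhibit two essential proper simple elements of~$M$ that lie in distinct strongly connected components of~$\Acceptor_M$; by the remark immediately following the definition of essential transitivity, this suffices. My candidates will be $\Delta_G$ and $\Delta_H$, regarded as elements of~$M$. I assume both factors are non-trivial — otherwise $M$ coincides with one factor and the claim concerns that factor only. Under this assumption, both $\Delta_G$ and $\Delta_H$ lie in~$\pSimples_M$, since each divides $\Delta_M=\Delta_G\Delta_H$ and neither equals $\Delta_M$ or $\id$.

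Under the lattice isomorphism $\phi\colon G\times H\to M$, $(g,h)\mapsto g\join h$, from \autoref{T:ZappaSzep}(1), the element $\Delta_G$ corresponds to the pair $(\Delta_G,\id)$ and $\Delta_H$ to $(\id,\Delta_H)$; because meets in~$M$ correspond to componentwise meets in $G\times H$, this yields $\Delta_G\meet_M\Delta_H=\id$. From $\Delta_G\cdot\Delta_H=\Delta_M$ we get $\complement_M\Delta_G=\Delta_H$, so $\complement_M\Delta_G\meet\Delta_G=\id$, and hence the word $\Delta_G\ldot\Delta_G\ldot\cdots\ldot\Delta_G$ of arbitrary length lies in~$\calL_M$. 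Thus $\Delta_G$ is essential, and a symmetric argument handles~$\Delta_H$.

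The central step, which I expect to be the main obstacle, is to prove by induction on path length that every vertex reachable from $\Delta_G$ in $\Acceptor_M$ lies in $\Simples_G\setminus\{\id\}$. The key observation is that $\Delta_H\prefix\complement_M y$ in~$M$ for every $y\in\Simples_G$: from $y\prefix\Delta_G$, right multiplication by $\Delta_H$ yields $y\Delta_H\prefix\Delta_G\Delta_H=\Delta_M$, which is exactly $\Delta_H\prefix y\under\Delta_M=\complement_M y$. Now let $z\in\pSimples_M$ be any successor of such a~$y$ in $\Acceptor_M$; then $\complement_M y\meet z=\id$, and combining this with $\Delta_H\prefix\complement_M y$ gives $\Delta_H\meet z\prefix\complement_M y\meet z=\id$, hence $\Delta_H\meet z=\id$. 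Writing $z\leftrightarrow(g_z,h_z)$ under~$\phi$ and using that meets correspond componentwise, $\Delta_H\meet z$ corresponds to $(\id,h_z)$, so the condition becomes $h_z=\id$, i.e.\ $z\in\Simples_G\setminus\{\id\}$, completing the inductive step.

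The conclusion is then immediate: $\Delta_H$ corresponds to $(\id,\Delta_H)$ with $\Delta_H\neq\id$, so $\Delta_H\notin\Simples_G\setminus\{\id\}$ and is therefore not reachable from~$\Delta_G$ in~$\Acceptor_M$. Hence $\Delta_G$ and $\Delta_H$ lie in distinct strongly connected components of~$\Acceptor_M$, and $M$ is not essentially transitive.
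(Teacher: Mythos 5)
Your choice of witnesses ($\Delta_G$ and $\Delta_H$) and the overall strategy (show they are essential but lie in different strongly connected components of $\Acceptor_M$) match the paper's proof exactly. The difference is that the paper delegates the ``different components'' part to a cited structural result about \ZS{} products ([Zappa--Sz\'{e}p, Corollary~48]), whereas you attempt to re-derive it from scratch; that re-derivation is where the gap lies.

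The problematic step is the sentence ``from $y\prefix\Delta_G$, right multiplication by $\Delta_H$ yields $y\Delta_H\prefix\Delta_G\Delta_H$.'' The prefix relation is left-divisibility, and right multiplication does \emph{not} preserve it in a noncommutative monoid: from $\Delta_G=yu$ you get $\Delta_M=yu\Delta_H$, so $y\prefix\Delta_M$ and $\complement_M y=u\Delta_H$, but to conclude $y\Delta_H\prefix\Delta_M$ you would need $u\Delta_H=\Delta_H v$ for some $v$ — which is exactly the assertion $\Delta_H\prefix\complement_M y$ you are trying to prove. So the argument is circular. (What your manipulation \emph{does} legitimately give is $\complement_M y\suffix\Delta_H$, which is not the same statement; $\suffix$ and $\prefix$ point in opposite directions here.) The conclusion $\Delta_H\prefix\complement_M y$ for $y\in\Simples_G$ is plausibly true for a \ZS{} Garside structure, but it needs a genuine structural input about how $G$- and $H$-parts interact under multiplication — this is precisely the role of the cited lemmas/corollary in the Zappa--Sz\'{e}p reference (compare how \autoref{L:PSeqProduct} invokes their Lemma~42), and it does not follow from \autoref{T:ZappaSzep}(1) or from general Garside axioms alone. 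The same caveat applies, more mildly, to your ``symmetric'' claim that $\Delta_H$ is essential: it requires $\complement_M\Delta_H\in G$, which again is a \ZS-specific fact rather than a formal consequence of $\Delta_G\Delta_H=\Delta_M$. The rest of your argument (the lattice-isomorphism computation showing $\Delta_H\meet z=\id$ forces $z\in\Simples_G$, and the final SCC conclusion) is correct once the key divisibility claim is supplied.
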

\begin{proof}
  By \cite[Theorem~34]{Zappa-Szep},~$G$ and~$H$ are parabolic submonoids of~$K$.
  The corresponding Garside elements~$\Delta_G$ of~$G$ and~$\Delta_H$ of~$H$ are proper simple elements of~$K$.
  Moreover, these elements are essential, as the words $\Delta_G \ldot \cdots \ldot \Delta_G$ and $\Delta_H \ldot \cdots \ldot \Delta_H$ are in normal form for any number of copies of~$\Delta_G$ respectively~$\Delta_H$.
  Yet, by \cite[Corollary~48]{Zappa-Szep}, there cannot be a normal form word connecting~$\Delta_G$ to~$\Delta_H$.
\end{proof}

\begin{definition}\label{D:DeltaPure}
  For $y\in M$, let $\Delta_y := \Join\{ x\under y : x \in M \}$.
  The monoid $M$ is called \emph{$\Delta$-pure} if $\Delta_a=\Delta_b$ holds for any $a,b\in\Atoms$.
\end{definition}

\begin{theorem}[{\cite[Proposition~4.7]{Picantin01}\cite[Theorem~39]{Zappa-Szep}}]\label{T:DeltaPure}
  A Garside monoid~$M$ is $\Delta$-pure if and only if it is $\zs$-indecom\-posable.
\end{theorem}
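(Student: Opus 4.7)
My plan is to prove both implications of the equivalence, using the \ZS{} Garside structure machinery of \autoref{T:ZappaSzep} and the theory of parabolic submonoids from \autoref{D:ParabolicSubmonoid} and \autoref{P:ParabolicSubmonoid}.

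For the direction ``$\zs$-decomposable implies not $\Delta$-pure'', I would assume $M = G \zs H$ is a non-trivial \ZS{} decomposition. Since $G$ and $H$ are parabolic submonoids of $M$ (by \cite[Theorem~34]{Zappa-Szep}), their sets of atoms are subsets of $\Atoms_M$; an easy argument using the uniqueness in \autoref{D:ZappaSzepProduct} shows that $\Atoms_M$ is their disjoint union. I would then establish that $\Delta_a = \Delta_G$ for every $a \in \Atoms_G$ as follows. Writing an arbitrary $x \in M$ in the form $x = g \join h$ with $g\in G$, $h\in H$ via \autoref{T:ZappaSzep}\,(i), the poset isomorphism gives $x \join a = (g \join_G a) \join h$; substituting into the defining identity $x \cdot (x \under a) = x \join a$ and using left-cancellativity lets one reduce $x \under a$ to an expression whose $G$-component is precisely $g \under_G a$. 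Taking joins over all $x \in M$ therefore recovers the $G$-internal quantity $\Join_{g \in G}(g \under_G a)$, which equals $\Delta_G$ because $\Delta_G$ is the $\prefix$-join of all simple elements of $G$. Symmetrically, $\Delta_b = \Delta_H$ for $b \in \Atoms_H$, so $\Delta_a \ne \Delta_b$ and $M$ is not $\Delta$-pure.

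For the converse (not $\Delta$-pure implies $\zs$-decomposable), I would take atoms $a, b \in \Atoms_M$ with $\Delta_a \ne \Delta_b$ and construct the decomposition directly from $\Delta_a$. The first step is to show that $\Delta_a$ is a balanced simple element of $M$ and that the submonoid $G$ generated by $\{c \in \Atoms_M : c \prefix \Delta_a\}$ is a parabolic submonoid with Garside element $\Delta_a$ in the sense of \autoref{D:ParabolicSubmonoid}; by \autoref{P:ParabolicSubmonoid}, $G$ is then a Garside monoid closed under $\under$ and $/$. The second step is to let $H$ be the parabolic submonoid generated by the complementary set of atoms and to verify that $\Delta_M = \Delta_G \Delta_H$, giving a \ZS{} Garside structure. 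Finally, one shows the unique decomposition $x = g_1 h_1 = h_2 g_2$ of \autoref{D:ZappaSzepProduct} by induction on atomic length: each atom of $H$ can be commuted past a preceding atom of $G$ using a defining relation of $M$, and such relations necessarily stay within $G$ or within $H$ because no atom outside $G$ can enter $\Delta_a$ during a join computation.

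The main obstacle is the first step of the converse: proving that $\Delta_a$ is actually simple (i.e.\ the join $\Join\{x \under a : x\in M\}$ is attained below $\Delta_M$ and is balanced) and that $\Div(\Delta_a)$ coincides with the submonoid generated by its atomic prefixes. This closure property is what drives the entire construction; it forces defining relations of $M$ to respect the partition of $\Atoms_M$ and thereby guarantees the \ZS{} factorisation. Once it is established, the remaining axioms of \autoref{D:ZappaSzepProduct} follow by a routine inductive argument on atomic length combined with the uniqueness provided by cancellativity and the lattice structure.
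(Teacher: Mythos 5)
The paper does not prove \autoref{T:DeltaPure}; it is quoted with citation from Picantin and from the Zappa--Sz\'ep paper. There is therefore no ``paper's own proof'' to compare against, and the proposal has to stand or fall on its own. Your forward direction is on roughly the right lines, but the central reduction is underjustified: you assert that, writing $x = g \join h$, the element $x\under a$ has $G$-component $g\under_G a$ and trivial $H$-component. The poset isomorphism in \autoref{T:ZappaSzep} identifies $x\join a$ with the pair $(g\join a,\,h)$, but it is a \emph{poset} isomorphism, not a monoid map, and says nothing directly about the cancellative identity $x\,(x\under a)=x\join a$. The statement that $x\under a\in G$ for all $x\in M$, $a\in\Atoms_G$ is a genuine structural property of the decomposition and must be proved --- \autoref{P:ParabolicSubmonoid} only gives closure of a parabolic submonoid under $\under$ \emph{within itself}, not for arbitrary $x\in M$. (You also do not need $\Delta_a=\Delta_G$ exactly; once $\Delta_a\prefix\Delta_G$, $\Delta_b\prefix\Delta_H$ are in hand, $\Delta_G\meet\Delta_H=\id$ together with $a\prefix\Delta_a$, $b\prefix\Delta_b$ already forces $\Delta_a\ne\Delta_b$.)

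The converse is where the real content lives, and you have correctly located the obstacle and then left it open. Establishing that $\Delta_a$ is a balanced simple element whose divisor set coincides with the parabolic submonoid generated by its atomic prefixes \emph{is} the theorem; without that the construction cannot begin, so the proposal is a roadmap, not a proof. Your closing sketch is also conceptually off: in a Zappa--Sz\'ep product an atom of $H$ does not ``commute past'' an atom of $G$. One must instead show that for $h\in H$, $g\in G$ there are (generally different) $g'\in G$, $h'\in H$ with $hg = g'h'$, and this mutual-action structure is precisely what separates $\zs$ from a direct product. Appealing to ``defining relations of $M$'' is not available --- a general Garside monoid carries no canonical presentation --- and even granting one, the mixed relations $hg=g'h'$ needed for the decomposition emphatically do \emph{not} ``stay within $G$ or within $H$''. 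Concretely, then, the gap is the unproven claim that $\Delta_a$ is balanced, simple, and that $\Div(\Delta_a)$ is a parabolic sublattice closed under $\under$ and its dual; everything else is either a corollary of that claim or needs to be reformulated without the spurious commutation.
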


\begin{corollary}\label{TransitiveImpliesDeltaPure}
  If $M$ is essentially transitive then it is $\Delta$-pure.
\end{corollary}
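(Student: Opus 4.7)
The plan is to prove this by contrapositive using the two theorems immediately preceding the corollary. Specifically, I would assume that $M$ is \emph{not} $\Delta$-pure and show that $M$ is not essentially transitive, which is exactly the contrapositive of the claim.

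First, if $M$ is not $\Delta$-pure, then by \autoref{T:DeltaPure} (the Picantin/Zappa--Sz\'ep characterisation), $M$ is $\zs$-decomposable, meaning there exist submonoids $G$ and $H$ of $M$ such that $M = G \zs H$ is a nontrivial \ZS{} product. Then \autoref{product-not-transitive} applies directly to this decomposition and yields that $M$ is not essentially transitive. Reading the chain of implications in the forward direction gives the statement of the corollary.

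There is no real obstacle here; the content of the corollary is essentially the observation that the two preceding results compose. The proof is therefore a single sentence invoking \autoref{T:DeltaPure} followed by \autoref{product-not-transitive} via contraposition.

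\begin{proof}
Suppose, for a contradiction, that $M$ is essentially transitive but not $\Delta$-pure. By \autoref{T:DeltaPure}, $M$ is not $\zs$-indecomposable, hence there exist submonoids $G$ and $H$ of $M$ such that $M = G \zs H$. But then \autoref{product-not-transitive} implies that $M$ is not essentially transitive, contradicting the assumption.
\end{proof}
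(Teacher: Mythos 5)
Your proof is correct and is exactly the argument the paper uses: it is the contrapositive composition of \autoref{T:DeltaPure} with \autoref{product-not-transitive}, which is precisely what the paper's one-line proof (``The claim follows with \autoref{product-not-transitive} and \autoref{T:DeltaPure}'') invokes. You have simply spelled out the same chain of implications in more detail.
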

\begin{proof}
  The claim follows with \autoref{product-not-transitive} and \autoref{T:DeltaPure}.
\end{proof}

The following example shows that there exists a Garside monoid that is $\Delta$-pure, hence $\zs$-indecomposable, but not essentially transitive, that is, that the converses to \autoref{product-not-transitive} and \autoref{TransitiveImpliesDeltaPure} do not hold.

\begin{example}\label{example:aa=bb:2-framed}
  Consider the monoid $M = \langle a, b \st a^2 = b^2 \rangle^+$ with Garside element $\Delta = a^4$.
  The lattice of simple elements and the acceptor for the regular language of words in normal form are shown in \autoref{F:aa=bb:2-framed}.

  \begin{figure}
  \hfill
  \parbox[c]{0.45\textwidth}{
    \begin{xy}
      0;<3em,0em>:<0em,3em>::
      (1,-0.6)*+{\text{Hasse diagram}};
      (1,0)*+{\id}="e";
      (0,1)*+{a}="a";
      (2,1)*+{b}="b";
      (-1,2)*+{ab}="ab";
      (1,2)*+{a^2}="aa";
      (3,2)*+{ba}="ba";
      (0,3)*+{a^3}="aaa";
      (2,3)*+{b^3}="bbb";
      (1,4)*+{\Delta}="Delta";
      {\ar@{->}^{a} "e";"a"};
      {\ar@{->}_{b} "e";"b"};
      {\ar@{->}_{a} "a";"aa"};
      {\ar@{->}^{b} "a";"ab"};
      {\ar@{->}^{b} "b";"aa"};
      {\ar@{->}_{a} "b";"ba"};
      {\ar@{->}^{b} "ab";"aaa"};
      {\ar@{->}_{a} "aa";"aaa"};
      {\ar@{->}^{b} "aa";"bbb"};
      {\ar@{->}_{a} "ba";"bbb"};
      {\ar@{->}^{a} "aaa";"Delta"};
      {\ar@{->}_{b} "bbb";"Delta"};
    \end{xy}
  } \hfill
  \parbox[c]{0.45\textwidth}{
    \begin{xy}
      0;<3em,0em>:<0em,3em>::
      (1,-2)*+{\Acceptor};
      (0,0)*+{ab}="ab";
      (2,0)*+{ba}="ba";
      (1,1)*+{a^2}="aa";
      (0,1)*+{a}="a";
      (2,1)*+{b}="b";
      (0,2)*+{b^3}="bbb";
      (2,2)*+{a^3}="aaa";
      {\ar@{->}_{a} "ab";"a"};
      {\ar@{->}^{b} "ba";"b"};
      {\ar@{->}^{a} "bbb";"a"};
      {\ar@{->}_{b} "aaa";"b"};
      {\ar@(d,r)_{ab} "ab";"ab"};
      {\ar@(l,d)_{ab} "bbb";"ab"};
      {\ar@(d,l)^{ba} "ba";"ba"};
      {\ar@(r,d)^{ba} "aaa";"ba"};
    \end{xy}
  } \hfill {}
  \caption{The lattice of simple elements and the acceptor for the regular language of words in normal form for the monoid from \autoref{example:aa=bb:2-framed}.}
  \label{F:aa=bb:2-framed}
  \end{figure}

  We see that $ab$ and $ba$ are essential elements that are in different strongly connected components of~$\Acceptor$, and so~$\calL$ is not essentially transitive.

  Now consider $\Delta_a = \Join \{ x \under a : x \in M \}$.
  If $x$ has $a$ as a prefix then $x \under a = \id$, so we can
  restrict our attention to elements which do not have $a$ as a
  prefix.  This means that $x = (ba)^k$ or $x = (ba)^k b$ for some
  $k$.  In the first case $(ba)^k \join a = (ba)^k a$, so $x \under a
  = a$.  In the second case $(ba)^k b \join a = (ba)^k b^2$, so $x
  \under a = b$.  Therefore $\Delta_a = a \join b = a^2$.  Similarly,
  $\Delta_b = a^2$.  Hence $M$ is $\Delta$-pure, and thus $\zs$-indecomposable.
\end{example}

\section{Growth rates}\label{S:GrowthRates}

Throughout this section, let~$M$ be a Garside monoid with Garside element~$\Delta$, set of atoms~$\Atoms$ and set of proper simple elements~$\pSimples=\Div(\Delta)\setminus\{\id,\Delta\}$.
Recall that~$\alpha_M$ is the exponential growth rate of the regular language~$\PSeq_M$ and that~$\beta_M$ is the exponential growth rate of the regular language~$\calL_M$.

The main aim of this section is to show that $\alpha_M < \beta_M$ holds if the language~$\calL_M$ is essentially transitive and every element of~$\pSimples$ is essential.
In particular, by \cite[Theorem~4.7]{GT13}, the expectation $\Expect_{\nu_k\times\mu_\Atoms}[\pd]$ of the penetration distance $\pd(x,a)$ is in this case bounded independently of~$k$, where~$\nu_k$ is the uniform distribution on~$\calL_M^{(k)}$ and~$\mu_\Atoms$ is the uniform distribution on the set~$\Atoms$.

Moreover, we will show that the expectation of the penetration distance $\Expect_{\nu_k\times\mu_\Atoms}[\pd]$, with~$\nu_k$ and~$\mu_\Atoms$ as above, diverges if~$(M,G,H,\Delta_M, \Delta_G, \Delta_H)$ is a \ZS{} Garside structure such that $\beta_G,\beta_H>1$ holds and at least one of the factors has the property that its language of normal forms is essentially transitive and all proper simple elements are essential.

\begin{theorem}\label{T:BoundedExpectedPD}
If every proper simple element of~$M$ is essential and the language~$\calL_M$ of normal forms is essentially transitive, then one has $\alpha_M<\beta_M$.
\end{theorem}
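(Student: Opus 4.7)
The plan is a Perron--Frobenius test-vector argument. Under the two hypotheses, the acceptor $\Acceptor_M$, restricted to $\pSimples$, is strongly connected: essential transitivity gives a unique non-singleton strongly connected component, and the hypothesis that every proper simple element is essential places all of $\pSimples$ inside that component. Hence the adjacency matrix $B = (B_{s,s'})_{s,s' \in \pSimples}$, defined by $B_{s,s'} = 1$ iff $\complement s \meet s' = \id$, is irreducible with $\rho(B) = \beta_M$. By Perron--Frobenius, $B$ admits a strictly positive \emph{left} eigenvector: there exists $v \in \RR^{\pSimples}$ with $v > 0$ and $B^T v = \beta v$ for $\beta := \beta_M$; equivalently, $\sum_{s' : \complement s' \meet y = \id} v(s') = \beta \, v(y)$ for every $y \in \pSimples$.

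Let $A$ be the transition matrix of the acceptor of $\PSeq_M$ on the state space $\mathcal{S} := \{(s,m) \in \pSimples \times \pSimples : m \prefix \complement s\}$, so that $\alpha_M = \rho(A)$. I would use the test vector $V(s,m) := v(s)$. The key computation: from $(s,m) \in \mathcal{S}$ with $sm = \Delta$ there are no outgoing transitions and $(AV)(s,m) = 0$; otherwise, unwinding the definition of $\PSeq_M$, the outgoing transitions are to $(s',m')$ with $\complement s' \meet s = \id$ and $m' = \complement s' \meet sm \ne \id$. Since $s \prefix sm$ implies $\complement s' \meet s \prefix \complement s' \meet sm$, the set $\{s' : \complement s' \meet sm = \id\}$ is contained in $\{s' : \complement s' \meet s = \id\}$, so the allowed successors of $(s,m)$ are precisely those $s'$ that are $\calL_M$-successors of $s$ but fail to be $\calL_M$-successors of $sm$. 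Both $s$ and $sm$ lie in $\pSimples$ (note $sm \ne \id$ since $s \ne \id$, and $sm \ne \Delta$ by case), so the left-eigenvector identity applies to each summand and
\[
  (AV)(s,m) \;=\; \sum_{s' : \complement s' \meet s = \id} v(s') \;-\; \sum_{s' : \complement s' \meet sm = \id} v(s') \;=\; \beta \, v(s) - \beta \, v(sm) .
\]

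To conclude, set $v_{\min} := \min_{s \in \pSimples} v(s) > 0$, $v_{\max} := \max_{s \in \pSimples} v(s)$, and $c := \beta \, v_{\min}/v_{\max} > 0$. Then $v(sm) \ge v_{\min} \ge (v_{\min}/v_{\max}) \, v(s)$, so $(AV)(s,m) \le \beta \, v(s) \, (1 - v_{\min}/v_{\max}) = (\beta - c) V(s,m)$ when $sm \ne \Delta$, and trivially $(AV)(s,m) = 0 \le (\beta - c) V(s,m)$ when $sm = \Delta$. Since $A \ge 0$ and $V > 0$, the standard Collatz--Wielandt bound yields $\rho(A) \le \beta - c < \beta$, i.e.\ $\alpha_M < \beta_M$. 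The main conceptual step, and what I expect to be the only non-routine part, is the identification of the $\PSeq_M$-transitions out of $(s,m)$ with the $\Acceptor_M$-transitions out of $s$ that become "blocked" once $s$ is enlarged to $sm$, after which the left-eigenvector identity delivers the required uniform contraction and the rest is standard linear algebra.
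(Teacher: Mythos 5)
Your proof is correct and follows essentially the same strategy as the paper: use the two hypotheses to make the $\pSimples\times\pSimples$ adjacency matrix of $\Acceptor$ irreducible, take the (strictly positive) Perron--Frobenius eigenvector, pull it back to a test vector $y_{(s,m)}=x_s$ on $\mathcal{P}$, show $(\PSeqAcceptor y)_t<\beta\,y_t$ for every state~$t$, and conclude via the Collatz--Wielandt (inf--max) bound. The one place where you streamline the argument is the key inequality: rather than introducing the auxiliary graph~$\widetilde{\PSeqAcceptor}$ on the enlarged state set and comparing it with $\PSeqAcceptor$ by edge removal, you apply the eigenvector identity twice to compute $(\PSeqAcceptor y)_{(s,m)}=\beta\,x_s-\beta\,x_{sm}$ directly, which gives the strict inequality and, as a small bonus, an explicit contraction constant $c=\beta\,x_{\min}/x_{\max}$.
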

\begin{proof}
As there is only one monoid, we drop the subscript $M$.

Consider the acceptor $\Acceptor$ of $\calL \subseteq (\pSimples)^*$, whose adjacency matrix is given~by
\[
 \Acceptor_{s_1,s_2} = \begin{cases}
                         1 & \partial s_2\wedge s_1 = \id \\
                         0 & \text{otherwise}
                       \end{cases}
\]
for $s_1,s_2\in\pSimples$.  The growth rate $\beta$ of $\calL$ is the Perron--Frobenius eigenvalue of the non-negative matrix $(\Acceptor_{s_1,s_2})_{s_1,s_2\in\pSimples}$.
Let $x = (x_s)_{s\in\pSimples}$ be an eigenvector for the eigenvalue $\beta$ of $\Acceptor$.

The acceptor $\PSeqAcceptor$ of $\PSeq \subseteq \mathcal{P}^*$, where $\mathcal{P}=\{(s,m)\in\pSimples\times\pSimples : sm\preccurlyeq\Delta\}$, has the adjacency matrix given by
\[
 \PSeqAcceptor_{(s_1,m_1),(s_2,m_2)} = \begin{cases}
                         1 & \partial s_2\wedge s_1 = \id
                             \,\text{ and }\, s_1m_1\ne \Delta
                             \,\text{ and }\, m_2 = \partial s_2\wedge s_1m_1 \\
                         0 & \text{otherwise}
                       \end{cases}
\]
for $(s_1,m_1),(s_2,m_2)\in\mathcal{P}$.  The growth rate $\alpha$ of $\PSeq$ is the Perron--Frobenius eigenvalue of the non-negative matrix $\big(\PSeqAcceptor_{(s_1,m_1),(s_2,m_2)}\big)_{(s_1,m_1),(s_2,m_2)\in\mathcal{P}}$, whence one has
\[
   \alpha = \inf_{z\in(\RR^+)^{\mathcal{P}}} \;
               \max_{t\in\mathcal{P}} \;
                  \frac{(\PSeqAcceptor z)_t}{z_t}
\]
by~\cite[Theorem~3.1]{TamWu89}.
In order to prove the theorem, it is thus sufficient to construct a vector
$y = (y_t)_{t\in\mathcal{P}}$ such that, for any 
$t\in\mathcal{P}$, one has $y_t>0$ and $(\Pi y)_t < \beta y_t$.

To do this, consider
$\widetilde{\mathcal{P}} = \{(s,m)\in\pSimples\times(\pSimples\cup\{\id\}) : sm\preccurlyeq\Delta\}$,
and define a directed graph~$\widetilde{\PSeqAcceptor}$ with vertex set~$\widetilde{\mathcal{P}}$
via its adjacency matrix given by
\[
 \widetilde{\PSeqAcceptor}_{(s_1,m_1),(s_2,m_2)} = \begin{cases}
                         1 & \partial s_2\wedge s_1 = \id \,\text{ and }\, m_2 = \partial s_2\wedge s_1m_1 \\
                         0 & \text{otherwise}
                       \end{cases}
\]
for $(s_1,m_1),(s_2,m_2)\in\widetilde{\mathcal{P}}$.
Observe that~$\widetilde{\PSeqAcceptor}$ has~$\PSeqAcceptor$ as a subgraph and that, locally,~$\widetilde{\PSeqAcceptor}$ resembles the graph~$\Acceptor$:
The edges ending in the vertex $(s_1,m_1)\in\widetilde{\mathcal{P}}$ of $\widetilde{\PSeqAcceptor}$ are in bijection to the edges ending in the vertex~$s_1$ of~$\Acceptor$.  More precisely, for given $s_1,s_2\in\pSimples$ and $m_1\in\pSimples\cup\{\id\}$, there exists an~$m_2\in\pSimples\cup\{\id\}$ such that there is an edge $(s_2,m_2) \to (s_1,m_1)$ in~$\widetilde{\PSeqAcceptor}$, if and only if there is an edge $s_2 \to s_1$ in~$\Acceptor$, and if so,~$m_2 = \partial s_2\wedge s_1 m_1$ is uniquely determined, that is, there exists exactly one such edge.
(See \autoref{F:Perron-Frobenius-aa=bb} and \autoref{F:Perron-Frobenius-aba=bab}.)

\begin{figure}
  \hfill
  \parbox[b]{0.3\textwidth}{
    \begin{tikzpicture}[->,auto,node distance=2cm, semithick]
      \node (1) at (1,0) {$\id$};
      \node (a) at (0,1) {$a$};
      \node (b) at (2,1) {$b$};
      \node (D) at (1,2) {$\Delta$};

      \path (1) edge node               {$a$} (a);
      \path (1) edge node [below right] {$b$} (b);
      \path (a) edge node               {$a$} (D);
      \path (b) edge node [above right] {$b$} (D);

      \node at (1,-1) {Hasse diagram};
    \end{tikzpicture}
  } \hfill
  \parbox[b]{0.3\textwidth}{
    \begin{tikzpicture}[->,auto,node distance=2cm, semithick]
      \node (a) at (0,0.5) {$a$};
      \node (b) at (2,0.5) {$b$};

      \path (a) edge [bend left=20] node [above] {$b$} (b);
      \path (b) edge [bend left=20] node [below] {$a$}  (a);

      \node at (1,-1) {$\Acceptor$};
    \end{tikzpicture}
  } \hfill
  \parbox[b]{0.3\textwidth}{    
    \begin{tikzpicture}[->,auto,node distance=2cm, semithick]
      \node [red] (a_1) at (0,0.7) {$(a,\id)$};
      \node [red] (b_1) at (2.5,0.7) {$(b,\id)$};
      \node       (a_a) at (0,3) {$(a,a)$};
      \node       (b_b) at (2.5,3) {$(b,b)$};

      \path (a_1) edge [bend left=20,red] node [above] {$(b,\id)$} (b_1);
      \path (b_1) edge [bend left=20,red] node [below] {$(a,\id)$} (a_1);
      \path (a_a) edge [bend left=20,red] node [above] {$(b,b)$} (b_b);
      \path (b_b) edge [bend left=20,red] node [below] {$(a,a)$} (a_a);

      \node at (1,-1) {$\PSeqAcceptor$\qquad
                       \color{red}{$\widetilde{\PSeqAcceptor} \setminus \PSeqAcceptor$}};
    \end{tikzpicture}
  } \hfill {}
  \caption{Lattice of simple elements and digraphs $\Gamma$, $\PSeqAcceptor$ and $\widetilde{\PSeqAcceptor}$ for the monoid $M=\langle a,b \st a^2=b^2 \rangle^+$.  Vertices and edges in red are contained in $\widetilde{\PSeqAcceptor} \smallsetminus \PSeqAcceptor$.  (That is, the digraph~$\PSeqAcceptor$ consists of the vertices $(a,a)$, and $(b,b)$ without any edges.)}
  \label{F:Perron-Frobenius-aa=bb}
\end{figure}

\begin{figure}
  \hfill
  \parbox[b]{0.3\textwidth}{
    \begin{tikzpicture}[->,auto,node distance=2cm, semithick]
      \node (1)  at (1,0) {$\id$};
      \node (a)  at (0,1) {$a$};
      \node (b)  at (2,1) {$b$};
      \node (ab) at (0,2) {$ab$};
      \node (ba) at (2,2) {$ba$};
      \node (D)  at (1,3) {$\Delta$};

      \path (1)  edge node               {$a$} (a);
      \path (1)  edge node [below right] {$b$} (b);
      \path (a)  edge node               {$b$} (ab);
      \path (b)  edge node               {$a$} (ba);
      \path (ab) edge node               {$a$} (D);
      \path (ba) edge node [above right] {$b$} (D);
      
      \node at (1,-1) {Hasse diagram};
    \end{tikzpicture}
  } \hfill
  \parbox[b]{0.3\textwidth}{
    \begin{tikzpicture}[->,auto,node distance=2cm, semithick]
      \node (a)  at (0,1) {$a$};
      \node (ba) at (2,1) {$ba$};
      \node (b)  at (2,0) {$b$};
      \node (ab) at (0,0) {$ab$};

      \path (a)  edge [loop above,left]  (a);
      \path (b)  edge [loop below,right] (b);
      \path (ba) edge [bend left=10]     (ab);
      \path (ab) edge [bend left=10]     (ba);
      \path (a)  edge                    (ab);
      \path (ab) edge                    (b);
      \path (b)  edge                    (ba);
      \path (ba) edge                    (a);
      
      \node at (1,-1) {$\Acceptor$};
    \end{tikzpicture}
  } \hfill
  \parbox[b]{0.3\textwidth}{
    \begin{tikzpicture}[->,auto,node distance=2cm, semithick]
      \node [red] (a_1)  at (0,2) {$(a,\id)$};
      \node [red] (ba_1) at (2,1) {$(ba,\id)$};
      \node [red] (b_1)  at (2,0) {$(b,\id)$};
      \node [red] (ab_1) at (0,1) {$(ab,\id)$};
      \node (a_b)  at (2,2) {$(a,b)$};
      \node (b_a)  at (0,0) {$(b,a)$};

      \path (a_1)  edge [loop above,left,red]  (a_1);
      \path (b_1)  edge [loop below,right,red] (b_1);
      \path (ba_1) edge [bend left=10,red]     (ab_1);
      \path (ab_1) edge [bend left=10,red]     (ba_1);
      \path (a_1)  edge [red]                  (ab_1);
      \path (ab_1) edge [red]                  (b_1);
      \path (b_1)  edge [red]                  (ba_1);
      \path (ba_1) edge [red]                  (a_1);
      \path (ab_1) edge [red]                  (b_a);
      \path (b_1)  edge [red]                  (b_a);
      \path (ba_1) edge [red]                  (a_b);
      \path (a_1)  edge [red]                  (a_b);
      
      \node (ab_a) at (0,3.5) {$(ab,a)$};
      \node (b_ab) at (2,3.5) {$(b,ab)$};
      \node (a_ba) at (0,4.5) {$(a,ba)$};
      \node (ba_b) at (2,4.5) {$(ba,b)$};

      \path (ba_b) edge [bend left=10,red]     (ab_a);
      \path (ab_a) edge [bend left=10,red]     (ba_b);
      \path (ab_a) edge [red]                   (b_ab);
      \path (ba_b) edge [red]                   (a_ba);
      \path (b_ab) edge [red]                   (ba_b);
      \path (a_ba) edge [red]                   (ab_a);
      \path (a_ba) edge [loop above,left,red]  (a_ba);
      \path (b_ab) edge [loop below,right,red] (b_ab);

      \node at (1,-1) {$\PSeqAcceptor$\qquad
                       \color{red}{$\widetilde{\PSeqAcceptor} \setminus \PSeqAcceptor$}};
    \end{tikzpicture}
  } \hfill {}
  \caption{Lattice of simple elements and digraphs $\Gamma$, $\PSeqAcceptor$ and $\widetilde{\PSeqAcceptor}$ for the monoid $M=\langle a,b \st aba = bab \rangle^+$.  Vertices and edges in red are contained in $\widetilde{\PSeqAcceptor} \smallsetminus \PSeqAcceptor$.  (That is, the digraph~$\PSeqAcceptor$ consists of the vertices $(a,b)$, $(b,a)$, $(b,ab)$, $(a,ba)$, $(ab,a)$ and $(ba,b)$ without any edges.)  To reduce clutter, the edge labels in the acceptor graphs were suppressed.}
  \label{F:Perron-Frobenius-aba=bab}
\end{figure}

Now define a vector $y = (y_t)_{t\in\mathcal{P}}$ by setting $y_{(s,m)}=x_s$ for $(s,m)\in\mathcal{P}$ and a vector $\widetilde{y} = (\widetilde{y}_t)_{t\in\widetilde{\mathcal{P}}}$ by setting $\widetilde{y}_{(s,m)}=x_s$ for $(s,m)\in\widetilde{\mathcal{P}}$.

Clearly, $\widetilde{y}$ is an eigenvector to the eigenvalue $\beta$ of $\widetilde{\PSeqAcceptor}$:
\[
   (\widetilde{\PSeqAcceptor}\widetilde{y})_{(s_1,m_1)}
     = \!\!\!\!
   \sum_{\substack{
               (s_2,m_2)\in\widetilde{\mathcal{P}} \\[0.5ex]
               \partial s_2\wedge s_1 = \id \\[0.5ex]
               m_2=\partial s_2\wedge s_1 m_1
             }} \!\!\!\! \widetilde{y}_{(s_2,m_2)}
     = \!\!
   \sum_{\substack{
               s_2\in\pSimples \\[0.5ex]
               \partial s_2\wedge s_1 = \id
             }} \!\!\!\! x_{s_2}
     =
   (\Acceptor x)_{s_1}
     =
   \beta \cdot x_{s_1}
     =
   \beta \cdot \widetilde{y}_{(s_1,m_1)} .
\]

So it remains to show that for $t\in\mathcal{P}\subseteq\widetilde{\mathcal{P}}$ one has $y_t > 0$ and
$(\PSeqAcceptor y)_t < (\widetilde{\PSeqAcceptor}\widetilde{y})_t$.%
\medskip

\begin{claim*}
For $(s,m)\in\mathcal{P}\subseteq\widetilde{\mathcal{P}}$ one has $y_{(s,m)} > 0$.
\end{claim*}
\noindent
As every $s\in\pSimples$ is essential and $\calL$ is essentially transitive,
for any $s_1,s_2\in\pSimples$ there exists a positive integer $\ell$ such that $(\Gamma^\ell)_{s_1,s_2}>0$, whence $x_{s_2}>0$ implies $(\Gamma^\ell x)_{s_1}>0$, as $\Gamma$ and $x$ are non-negative.  As $x$ is an eigenvector of $\Gamma$ and $x_{s_2}>0$ must hold for at least one $s_2\in\pSimples$, we have $x_{s_1}>0$ for every $s_1\in\pSimples$ and thus $y_{(s,m)}=x_{s}>0$ for every $(s,m)\in\mathcal{P}$, showing the claim.
\medskip

\begin{claim*}
For $(s,m)\in\mathcal{P}\subseteq\widetilde{\mathcal{P}}$ one has
$(\PSeqAcceptor y)_{(s_1,m_1)} < (\widetilde{\PSeqAcceptor}\widetilde{y})_{(s_1,m_1)}$.
\end{claim*}
\noindent
We obtain $\PSeqAcceptor$ from $\widetilde{\PSeqAcceptor}$ by
\begin{inparaenum}[1.)]
 \item removing all edges ending in $(s_1,m_1)$ if $m_1=\id$ or $s_1m_1=\Delta$; and
 \item removing the edge $(s_2,m_2)\to (s_1,m_1)$ if $m_2=\id$.
\end{inparaenum}

So it is sufficient to show that for all $(s_1,m_1)\in\mathcal{P}$ such that $s_1m_1\ne\Delta$, there exists $s_2\in\pSimples$ such that $\partial s_2\wedge s_1m_1=\id$ and $y_{(s_2,m_2)}=x_{s_2}>0$.
The latter holds as all proper simple elements are essential, and thus $s_1m_1\ne\Delta$ implies the existence of an essential $s_2$ such that $s_2|s_1m_1$, and since $x_{s_2}>0$ holds.
\end{proof}

\begin{remark}
\autoref{T:BoundedExpectedPD} shows that the hypotheses of \cite[Theorem~4.8]{GT13} cannot be satisfied.
\end{remark}

\begin{corollary}\label{C:BoundedExpectedPD}
Assume that every proper simple element of $M$ is essential and that the language $\calL_M$ of normal forms is essentially transitive, let $\nu_{k}$ be the uniform probability measure on $\calL_M^{(k)}$, and let $\mu_\Atoms$ be the uniform probability distribution on the set $\Atoms$ of atoms of $M$.

The expected value $\Expect_{\nu_{k} \times \mu_\Atoms}[\pd]$ of the penetration distance with respect to $\nu_k\times \mu_\Atoms$ is uniformly bounded (that is, bounded independently of $k$).
\end{corollary}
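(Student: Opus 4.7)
The plan is to deduce this corollary immediately by chaining the previous theorem with the quantitative result from our earlier paper. First, I would apply \autoref{T:BoundedExpectedPD} to the monoid $M$: since the hypotheses of the corollary are exactly those of the theorem (every proper simple element of $M$ is essential and $\calL_M$ is essentially transitive), we conclude that the strict inequality $\alpha_M < \beta_M$ holds between the exponential growth rates of $\PSeq_M$ and $\calL_M$ defined in \autoref{N:GrowthRates}.

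Next, I would invoke \cite[Theorem~4.7]{GT13}, whose statement is recalled at the end of \autoref{S:BackgroundPenetration}: whenever $x \in \calL_M^{(k)}$ is chosen according to the uniform probability measure $\nu_k$ and $y \in \Atoms_M$ is chosen according to $\mu_\Atoms$, the condition $\alpha_M < \beta_M$ already guarantees that $\Expect_{\nu_k \times \mu_\Atoms}[\pd]$ is bounded above by a constant independent of $k$. Combining these two observations gives the claim.

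There is no serious obstacle: the entire content of the corollary is packaged in \autoref{T:BoundedExpectedPD} on the one hand, and in the previously established quantitative bound on the other. The only thing worth spelling out explicitly in the write-up is that the distributions $\nu_k$ and $\mu_\Atoms$ appearing in the corollary are precisely the distributions to which \cite[Theorem~4.7]{GT13} applies, so that the hypothesis $\alpha_M < \beta_M$ produced by \autoref{T:BoundedExpectedPD} is exactly the one needed to invoke that result without modification.
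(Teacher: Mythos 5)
Your proposal is correct and matches the paper's proof exactly: the corollary is obtained by applying \autoref{T:BoundedExpectedPD} to deduce $\alpha_M < \beta_M$ and then invoking \cite[Theorem~4.7]{GT13} to convert that inequality into a uniform bound on $\Expect_{\nu_k\times\mu_\Atoms}[\pd]$. Nothing further is needed.
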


\begin{proof}
The claim follows with \autoref{T:BoundedExpectedPD} and \cite[Theorem 4.7]{GT13}.
\end{proof}

\smallskip\noindent
We now turn to the analysis of growth rates of \ZS{} products with respect to a \ZS{} Garside structure.

\begin{lemma}\label{L:Sums}
For $c>1$ and $m\in\NN$ the following hold:
\begin{enumerate} \itemsep 0em \vspace{-0.5\topskip}
\item One has $\sum_{j=0}^{k-1} j^m c^j \in \Theta(k^m c^k)$.
\item One has $\sum_{j=0}^{k-1} j^m \in \Theta(k^{m+1})$.
\end{enumerate}
\end{lemma}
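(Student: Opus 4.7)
The plan is to prove each asymptotic statement by establishing matching upper and lower bounds, the lower bound being the only part that requires a moment of thought.

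For part (1), the upper bound is immediate: since $j^m \le k^m$ for $0 \le j \le k-1$, one can pull $k^m$ out of the sum and bound the remainder by a geometric series, giving
\[
  \sum_{j=0}^{k-1} j^m c^j \;\le\; k^m \sum_{j=0}^{k-1} c^j \;\le\; \frac{k^m c^k}{c-1} \;.
\]
For the lower bound, the key observation is that since $c>1$ the terms $j^m c^j$ grow (roughly geometrically) with $j$, so the sum is dominated by its largest term. Concretely, one may simply retain the term with $j=k-1$ (assuming $k\ge 2$):
\[
  \sum_{j=0}^{k-1} j^m c^j \;\ge\; (k-1)^m c^{k-1} \;\ge\; \frac{1}{2^m c} \, k^m c^k \;,
\]
using $(k-1)^m \ge (k/2)^m$ for $k\ge 2$. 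Combining both bounds yields the claim.

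For part (2), the upper bound $\sum_{j=0}^{k-1} j^m \le k\cdot k^m = k^{m+1}$ is trivial. For the lower bound, I would restrict the sum to indices $j\in\{\lceil k/2\rceil,\dots,k-1\}$, where each term is at least $(k/2)^m$, producing at least $\lfloor k/2\rfloor$ such terms and hence a total of at least $k^{m+1}/2^{m+1}$. (Alternatively, a comparison with $\int_0^k t^m\,dt = k^{m+1}/(m+1)$ would give the same conclusion.)

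Neither part presents a genuine obstacle; the only mild subtlety is remembering that the lower bound in (1) is driven entirely by the geometric factor, so a single terminal term is already of the right order. The whole lemma is a routine asymptotic calculation recorded here for use in subsequent estimates on growth rates of \ZS{} products.
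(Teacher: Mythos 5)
Your proof is correct and, for part (1), essentially identical to the paper's: the paper also retains only the $j=k-1$ term for the lower bound (using the same inequality $(k-1)^m \ge (k/2)^m$ for $k\ge 2$), and the upper bound via a geometric series is the same calculation written slightly differently (the paper normalises by $k^m c^k$ and bounds by $\sum_{j\ge 1}c^{-j}<\frac{1}{c-1}$). For part (2) the paper simply cites Bernoulli's (Faulhaber's) formula, whereas you supply an elementary two-sided estimate directly; this is a harmless and arguably more self-contained choice, but not a substantively different route.
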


\begin{proof}
The second claim holds by Bernoulli's formula~\cite[p.\,283]{concrete}.  For the first claim, observe that for any $k\ge2$ one has
\[
\frac1{2^m c}
 \le \frac{1}{k^m c^k} (k-1)^m c^{k-1}
 \le \frac{1}{k^m c^k} \sum_{j=0}^{k-1} j^m c^j
 <   \sum_{j=1}^k c^{-j}
 <   \frac{1}{c-1}  \;.
\]
\end{proof}

\begin{samepage}
\begin{lemma}\label{L:GrowthRates}
Using \autoref{N:GrowthRates}, the following hold:
\begin{enumerate} \itemsep 0em \vspace{-0.5\topskip}
\item One has $\beta_M=0$ if and only if $\gamma_M=1$ and $r_M=0$ hold.
\item One has $\beta_M=1$ if and only if $\gamma_M=1$ and $r_M\ge1$ hold.
      Moreover, in this case, one has $q_M=r_M-1$.
\item One has $\beta_M>1$ if and only if $\gamma_M>1$ holds.
      Moreover, in this case, one has $\beta_M=\gamma_M$ and $q_M=r_M$.
\end{enumerate}
\end{lemma}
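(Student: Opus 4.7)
The plan is to derive \autoref{L:GrowthRates} from the counting identity
\[
  \big|\calLbar^{(n)}\big| \;=\; \sum_{k=0}^{n} \big|\calL^{(k)}\big| \,,
\]
where $|\calL^{(0)}|=1$ by convention for the empty word, and then to invoke \autoref{L:Sums} separately in the three regimes $\beta_M=0$, $\beta_M=1$, and $\beta_M>1$.

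The main step is to show that every word $x_1\ldot\cdots\ldot x_n\in\calLbar^{(n)}$ decomposes uniquely as $\Delta^m\ldot w$ with $m\in\{0,1,\ldots,n\}$ and $w\in\calL^{(n-m)}$. This follows because the local normal form condition forces every letter equal to $\Delta$ to lie in a (possibly empty) initial prefix: if $x_{i+1}=\Delta$, then $\complement x_i\meet x_{i+1}=\complement x_i$, so $\complement x_i\meet x_{i+1}=\id$ forces $x_i=\Delta$; conversely, since $\complement\Delta=\id$, any element of $\Simples\setminus\{\id\}$ may follow $\Delta$. Hence the initial maximal run of $\Delta$'s in a word of $\calLbar^{(n)}$ is followed by a word over $\pSimples$ satisfying exactly the same local conditions, that is, by an element of $\calL^{(n-m)}$, and the displayed counting identity is immediate.

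With the identity in hand, the three cases become routine applications of \autoref{L:Sums}. If $\beta_M=0$, then $\calL$ is finite, the partial sums stabilise, and $|\calLbar^{(n)}|$ is eventually constant, yielding $\gamma_M=1$ and $r_M=0$. If $\beta_M=1$, then $|\calL^{(k)}|\in\Theta(k^{q_M})$, so by part~(2) of \autoref{L:Sums} the partial sums lie in $\Theta(n^{q_M+1})$, giving $\gamma_M=1$, $r_M\ge 1$, and $q_M=r_M-1$. If $\beta_M>1$, then $|\calL^{(k)}|\in\Theta(k^{q_M}\beta_M^k)$, so by part~(1) of \autoref{L:Sums} the partial sums lie in $\Theta(n^{q_M}\beta_M^n)$, giving $\gamma_M=\beta_M>1$ and $q_M=r_M$. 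Since the three regimes for $\beta_M$ are mutually exclusive and jointly exhaustive and each pins down $(\gamma_M,r_M)$ completely, the three ``if and only if'' statements in the lemma all follow simultaneously.

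I expect the only genuine obstacle to be recognising the prefix structure of $\Delta$-letters in normal form words of $\calLbar$; once that is established, the rest reduces to the summation asymptotics already packaged in \autoref{L:Sums}, together with the observation that modifying finitely many initial terms does not affect the $\Theta$-class.
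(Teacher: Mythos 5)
Your proposal is correct and follows essentially the same route as the paper: both rest on the identity $\big|\calLbar^{(n)}\big|=\sum_{k=0}^n\big|\calL^{(k)}\big|$, obtained from the decomposition $\calLbar^{(n)}=\bigsqcup_{m}\Delta^{m}\calL^{(n-m)}$, followed by the case split $\beta_M=0$, $\beta_M=1$, $\beta_M>1$ using \autoref{L:Sums} and the mutual exclusivity of the three regimes. You supply slightly more detail than the paper (the explicit argument that $\Delta$-letters must form an initial prefix, and the observation that exhaustiveness of the cases automatically upgrades the implications to biconditionals), but the key lemma and the structure of the argument are identical.
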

\end{samepage}
\begin{proof}
As $\calLbar^{(k)} = \bigsqcup_{j=0}^k \Delta^{k-j} \calL^{(j)}$ holds, one has
$\Big|\calLbar^{(k)}\Big| = \sum_{j=0}^k \big|\calL^{(j)}\big|$.
Firstly observe that $\beta_M=0$ holds if and only if one has $\big|\calL^{(k)}\big|=0$ for sufficiently large $k$.  The latter happens if and only if $\Big|\calLbar^{(k)}\Big|$ is eventually constant, which is equivalent to $\gamma_M=1$ and $r_M=0$.  So the first claim is shown.

Using \autoref{N:GrowthRates} and \autoref{L:Sums}, we obtain
\[
  k^{r_M} {\gamma_M}^k \in \Theta\Bigg( \sum_{j=0}^k j^{q_M} {\beta_M}^j \Bigg)
    = \begin{cases}
       \Theta\big( k^{q_M} {\beta_M}^k \big) & \text{if $\beta_M>1$} \\[1.0ex]
       \Theta\big( k^{q_M+1} \big)         & \text{if $\beta_M=1$} \;,
      \end{cases}
\]
which implies the remaining claims.
\end{proof}

\begin{proposition}\label{P:GrowthRatesProduct}
Assume that $(M,G,H,\Delta_M, \Delta_G, \Delta_H)$ is a \ZS{} Garside structure, let $\beta_M,\gamma_M$ and $q_M,r_M$ be as in~\autoref{N:GrowthRates}, and let $\beta_G,\gamma_G,\beta_H,\gamma_H\in\{0\}\cup[1,\infty[$ and $q_G,r_G,q_H,r_H\in\NN$ be the corresponding constants for~$G$ respectively~$H$.

The following table gives $\beta_M,\gamma_M,q_M,r_M$ in terms of $\beta_G,\gamma_G,\beta_H,\gamma_H$ and $q_G,r_G,q_H,r_H$:
\vspace*{-2ex}
\[
\begin{array}{c||c|c|c}
                  & \beta_H=0          & \beta_H=1          & \beta_H>1                  \\
                  & \gamma_H=1         & \gamma_H=1         & \gamma_H=\beta_H           \\
                  & r_H=0              & r_H=q_H+1          & r_H=q_H
\rule[-1.2ex]{0pt}{2.5ex}\\ \hline\hline\rule{0pt}{2.5ex}
\beta_G=0         & \beta_M=1          & \beta_M=1          & \beta_M=\beta_H            \\
\gamma_G=1        & q_M=0              & q_M=q_H+1          & q_M=q_H+1                  \\
r_G=0             & \gamma_M=1         & \gamma_M=1         & \gamma_M=\gamma_H          \\
                  & r_M=1              & r_M=r_H+1          & r_M=r_H+1
\rule[-1.2ex]{0pt}{2.5ex}\\ \hline\rule{0pt}{2.5ex}
\beta_G=1         & \beta_M=1          & \beta_M=1          & \beta_M=\beta_H            \\
\gamma_G=1        & q_M=q_G+1          & q_M=q_G+q_H+2      & q_M=q_G+q_H+2              \\
r_G=q_G+1         & \gamma_M=1         & \gamma_M=1         & \gamma_M=\gamma_H          \\
                  & r_M=r_G+1          & r_M=r_G+r_H+1      & r_M=r_G+r_H+1
\rule[-1.2ex]{0pt}{2.5ex}\\ \hline\rule{0pt}{2.5ex}
\beta_G>1         & \beta_M=\beta_G    & \beta_M=\beta_G    & \beta_M=\beta_G\beta_H     \\
\gamma_G=\beta_G  & q_M=q_G+1          & q_M=q_G+q_H+2      & q_M=q_G+q_H                \\
r_G=q_G           & \gamma_M=\gamma_G  & \gamma_M=\gamma_G  & \gamma_M=\gamma_G\gamma_H  \\
                  & r_M=r_G+1          & r_M=r_G+r_H+1      & r_M=r_G+r_H
\end{array}
\]
\end{proposition}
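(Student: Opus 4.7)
The plan is to exploit the bijection $\psi \colon \calLbar_G \times \calLbar_H \to \calLbar_M$ from \autoref{T:ZappaSzep}~(3), together with the length formula from \autoref{T:ZappaSzep}~(2), to express $\big|\calLbar_M^{(n)}\big|$ as a simple combinatorial expression in the sequences $\big|\calLbar_G^{(i)}\big|$ and $\big|\calLbar_H^{(j)}\big|$; after that, the result follows by a case analysis using \autoref{L:GrowthRates} and \autoref{L:Sums}.

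First I would observe that since a word $g_1|\cdots|g_m \in \calLbar_G^{(m)}$ represents an element $g\in G$ with $\sup_G(g)=m$, the length of the $\calLbar_M$-word $\psi(g,h)$ is $\sup_M(g\join h) = \max(\sup_G(g),\sup_H(h))$. Writing $f(n) = \big|\calLbar_G^{(n)}\big|$, $h(n) = \big|\calLbar_H^{(n)}\big|$ and $F(n) = \sum_{i=0}^{n} f(i)$, $H(n) = \sum_{j=0}^{n} h(j)$, the bijection~$\psi$ therefore yields
\[
\big|\calLbar_M^{(n)}\big| \;=\; F(n)H(n)-F(n-1)H(n-1) \;=\; f(n)H(n)+F(n-1)h(n)\;.
\]

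Next, I would invoke \autoref{L:GrowthRates} to write $f(n)\in\Theta(n^{r_G}{\gamma_G}^n)$ and $h(n)\in\Theta(n^{r_H}{\gamma_H}^n)$, and \autoref{L:Sums} to determine the cumulative sums: if $\gamma_G>1$ then $F(n)\in\Theta(n^{r_G}{\gamma_G}^n)$, while if $\gamma_G=1$ then $F(n)\in\Theta(n^{r_G+1})$, and analogously for $H$. Substituting these estimates into the formula above and reading off the dominant term then gives $\gamma_M$ and~$r_M$; one final application of \autoref{L:GrowthRates} converts $(\gamma_M,r_M)$ into $(\beta_M,q_M)$. For instance, in the case $\beta_G,\beta_H>1$ one obtains $f(n)H(n)\in\Theta(n^{r_G+r_H}(\gamma_G\gamma_H)^n)$ which dominates $F(n-1)h(n)\in\Theta(n^{r_G+r_H}\gamma_G^{n-1}\gamma_H^{n})$, yielding $\gamma_M=\gamma_G\gamma_H$ and $r_M=r_G+r_H$, from which $\beta_M=\beta_G\beta_H$ and $q_M=q_G+q_H$ as claimed in the table. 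The remaining eight cases are analogous.

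The main obstacle is purely bookkeeping: carefully tracking which of the two summands $f(n)H(n)$ and $F(n-1)h(n)$ dominates in each of the nine cells of the table, and checking that the polynomial exponents add up correctly when both cumulative sums contribute polynomial (rather than exponential) blow-up. The genuinely delicate situation is the one in which one of the factors has $\beta=1$ (so its $\calLbar$-count grows polynomially with exponent one higher than $\calL$), since there the translation between $(\beta,q)$ and $(\gamma,r)$ performed in \autoref{L:GrowthRates} is what introduces the off-by-one corrections visible in the middle row and column of the table. Once this translation is handled consistently, comparing the two summands in each case is a routine application of \autoref{L:Sums}.
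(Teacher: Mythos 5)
Your proposal is correct and takes essentially the same route as the paper: both use the bijection from \autoref{T:ZappaSzep} together with $\sup_M(g\join h)=\max(\sup_G(g),\sup_H(h))$ to write $\big|\calLbar_M^{(n)}\big|$ as a combination of the counts $\big|\calLbar_G^{(j)}\big|$, $\big|\calLbar_H^{(j)}\big|$ (the paper's three-term sum is exactly your $f(n)H(n)+F(n-1)h(n)$ after expanding $H(n)=h(n)+H(n-1)$), and both then feed the asymptotics from \autoref{N:GrowthRates}, \autoref{L:Sums} and \autoref{L:GrowthRates} into a case analysis. One small imprecision in your worked example: when $\beta_G,\beta_H>1$, the term $f(n)H(n)$ does not dominate $F(n-1)h(n)$ — both lie in $\Theta\big(n^{r_G+r_H}(\gamma_G\gamma_H)^n\big)$ since $\Theta$ absorbs the constant factor $\gamma_G^{-1}$ — but as both summands are nonnegative and of the same order, the conclusion $\gamma_M=\gamma_G\gamma_H$, $r_M=r_G+r_H$ is still correct.
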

\begin{proof}
First note that, by \autoref{L:GrowthRates}, the cases in the table are correct and exhaustive.

For $k\in\NN$ consider any $x\in M$ with $\NF(x)\in\calLbar_M^{(k)}$.  By \autoref{T:ZappaSzep}, there exist unique elements $g_x\in G$ and $h_x\in H$ such that $x=g_x\join h_x$, and moreover, $\NF(g_x)\in \calLbar_G^{(k_G)}$ and $\NF(h_x)\in \calLbar_H^{(k_H)}$ with $k=\max\{k_G,k_H\}$.

Indeed, the map $$\NF(x)\mapsto \big(\NF(g_x),\NF(h_x)\big)$$ is a bijection from $\calLbar_M$ to $\calLbar_G \times \calLbar_H$ by \autoref{T:ZappaSzep}, whence one has
%
%
\[
  \big|\calLbar_M^{(k)}\big| = \big|\calLbar_G^{(k)}\big| \cdot \big|\calLbar_H^{(k)}\big|
      + \sum_{j=0}^{k-1} \big|\calLbar_G^{(k)}\big| \cdot \big|\calLbar_H^{(j)}\big|
      + \sum_{j=0}^{k-1} \big|\calLbar_G^{(j)}\big| \cdot \big|\calLbar_H^{(k)}\big|
\]
and thus
\begin{align*}
  k^{r_M} {\gamma_M}^k  \in \Theta\Bigg(
    k^{r_G} {\gamma_G}^k k^{r_H} {\gamma_H}^k
    + k^{r_G} {\gamma_G}^k & \bigg(\sum_{j=0}^{k-1} j^{r_H} {\gamma_H}^j\bigg) \\
    & + \bigg(\sum_{j=0}^{k-1} j^{r_G} {\gamma_G}^j\bigg) k^{r_H} {\gamma_H}^k
  \Bigg)  \;.
\end{align*}
The claimed equalities for $\gamma_M$ and $r_M$ are easily verified using \autoref{L:Sums}, and the claimed equalities for $\beta_M$ and $q_M$ then follow with \autoref{L:GrowthRates}.
\end{proof}

\begin{corollary}\label{C:GrowthRatesProduct}
Assume that $(M,G,H,\Delta_M, \Delta_G, \Delta_H)$ is a \ZS{} Garside structure, and let $\beta_G$ and $\beta_H$ be the exponential growth rates of the regular languages~$\calL_G$ respectively~$\calL_H$.

Then $\big|\calL_M^{(k)}\big| \in \Theta\Big( \big|\calL_G^{(k)}\big| \cdot \big|\calL_H^{(k)}\big| \Big)$ holds if and only if $\beta_G>1$ and $\beta_H>1$.
\end{corollary}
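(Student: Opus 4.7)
My plan is to read the corollary directly off \autoref{P:GrowthRatesProduct}, using \autoref{L:GrowthRates} to translate back and forth between the asymptotics of the languages $\calL$ and $\calLbar$. For the \textbf{if} direction I would use the bottom-right cell of the table in \autoref{P:GrowthRatesProduct}: when $\beta_G,\beta_H>1$, the table gives $\beta_M=\beta_G\beta_H>1$ and $q_M=q_G+q_H$, so applying \autoref{L:GrowthRates} three times (once to $M$, once to $G$, and once to $H$) yields
\[
  \big|\calL_M^{(k)}\big|\in\Theta\big(k^{q_G+q_H}(\beta_G\beta_H)^k\big)
    = \Theta\big(k^{q_G}\beta_G^k\big)\cdot\Theta\big(k^{q_H}\beta_H^k\big)
    = \Theta\big(\big|\calL_G^{(k)}\big|\cdot\big|\calL_H^{(k)}\big|\big).
\]

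For the \textbf{only if} direction I would argue contrapositively, inspecting the remaining cells of the table. Suppose, without loss of generality, that $\beta_G\le 1$. If $\beta_G=0$, then \autoref{L:GrowthRates} gives $\big|\calL_G^{(k)}\big|=0$ for sufficiently large $k$, so the product $\big|\calL_G^{(k)}\big|\cdot\big|\calL_H^{(k)}\big|$ vanishes eventually; however, every entry in the first column of the table has $\beta_M\ge 1$ together with $r_M\ge 1$, which by \autoref{L:GrowthRates} forces $\big|\calL_M^{(k)}\big|\to\infty$, ruling out a $\Theta$-relation. If instead $\beta_G=1$ (with $\beta_H\in\{1\}\cup\,]1,\infty[$), then the three relevant cells of the middle row of the table all give $q_M=q_G+q_H+2$ while $\beta_M=\beta_H=\beta_G\beta_H$; hence the ratio $\big|\calL_M^{(k)}\big|\,/\,\big(\big|\calL_G^{(k)}\big|\cdot\big|\calL_H^{(k)}\big|\big)$ grows like $k^2$, and again the $\Theta$-relation fails. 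The case $\beta_H\le 1$ is symmetric.

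The argument is essentially a case check against the table, so no new techniques are required beyond \autoref{P:GrowthRatesProduct} and \autoref{L:GrowthRates}. The main thing to watch is the bookkeeping in the degenerate cases where some $\beta$ equals $0$ or $1$: in those regimes \autoref{L:GrowthRates} distributes the polynomial and exponential contributions to $\big|\calL^{(k)}\big|$ differently from the generic $\beta>1$ case, and the extra polynomial factor $k^2$ that appears in $q_M$ (coming from the two summations $\sum_{j=0}^{k-1} j^{r_G}$ and $\sum_{j=0}^{k-1} j^{r_H}$ in the proof of \autoref{P:GrowthRatesProduct}) is precisely what obstructs the $\Theta$-relation outside the bottom-right cell.
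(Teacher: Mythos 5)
Correct, and essentially the paper's own proof: you translate the claimed $\Theta$-relation into conditions on the constants from \autoref{N:GrowthRates} and read off the answer from the table in \autoref{P:GrowthRatesProduct}. Two cosmetic slips do not affect the argument: ``first column'' should read ``first row'' (the table's rows index $\beta_G$), and in the cell $\beta_G=\beta_H=0$ one only has $\big|\calL_M^{(k)}\big|\in\Theta(1)$ rather than divergence, but this is still nonzero for large $k$ while the product $\big|\calL_G^{(k)}\big|\cdot\big|\calL_H^{(k)}\big|$ vanishes eventually, which is all that is needed to rule out the $\Theta$-relation.
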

\begin{proof}
Using \autoref{N:GrowthRates}, $\big|\calL_M^{(k)}\big| \in \Theta\Big( \big|\calL_G^{(k)}\big| \cdot \big|\calL_H^{(k)}\big| \Big)$ is equivalent to $\beta_M=\beta_G\beta_H$ and $q_M=q_G+q_H$.
The claim then follows with \autoref{P:GrowthRatesProduct}.
\end{proof}

\begin{notation}\label{N:PSeqProduct}
Assume that $(M,G,H,\Delta_M, \Delta_G, \Delta_H)$ is a \ZS{} Garside structure.  Given $g=g_1|\cdots| g_k\in \calL_G^{(k)}$ and $h=h_1|\cdots |h_k\in \calL_H^{(k)}$,
consider the normal form $g'_1 h'_1|\cdots| g'_k h'_k \in \calL_M^{(k)}$ of
$g_1\cdots g_k \join h_1\cdots h_k$,
and define
\[
   \pi_{g,h} := (g'_1 h'_1, m_1)\ldot\cdots\ldot (g'_k h'_k, m_k)
   \;,
\]
where $m_k=\partial_H(h'_k)$ (that is, $h'_k m_k=\Delta_H$) and $m_i = \partial_M(g'_i h'_i) \wedge_M g'_{i+1} h'_{i+1} m_{i+1}$ for $i=1,\ldots,k-1$.
\end{notation}

\begin{lemma}\label{L:PSeqProduct}
In the situation of \autoref{N:PSeqProduct}, one has $\pi_{g,h} \in \PSeq_M^{(k)}$.
\end{lemma}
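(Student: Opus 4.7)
The plan is to verify directly the defining conditions of a penetration sequence for $\pi_{g,h}$, reading its letters via the convention of the definition of $\PSeq$ in which the rightmost pair $(g'_k h'_k, m_k)$ plays the role of the base. Two of the conditions are built in: the recursion $m_i = \partial_M(g'_i h'_i) \wedge_M g'_{i+1} h'_{i+1} m_{i+1}$ is exactly the definition in \autoref{N:PSeqProduct}, and the normal-form relation $\partial_M(g'_j h'_j) \wedge_M (g'_{j+1} h'_{j+1}) = \id$ between consecutive letters is part of the hypothesis that $g'_1 h'_1|\cdots|g'_k h'_k$ lies in $\calL_M^{(k)}$. What remains to verify is the base relation $m_k \preccurlyeq_M \partial_M(g'_k h'_k)$, the fact that each $m_j$ lies in $\pSimples_M$, and the condition $(g'_j h'_j)\,m_j \ne \Delta_M$ for $j \in \{2,\ldots,k\}$.

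A central intermediate step is to show that $h'_j \ne \Delta_H$ and $g'_j \ne \Delta_G$ for every $j \in \{1,\ldots,k\}$. For $h'_j \ne \Delta_H$ I would argue by contradiction: using the normal-form condition together with the \ZS{} swap rule (for every $g \in G$ one can write $g\,\Delta_H = \Delta_H\,g^{\ast}$ with $g^{\ast} \in G$), a downward induction shows that $h'_j = \Delta_H$ for some $j$ would force $h'_{j-1} = \Delta_H$, and hence $h'_1 = \Delta_H$. Via the poset isomorphism of \autoref{T:ZappaSzep} this in turn gives $\Delta_H \preccurlyeq_M g\join h$ and therefore $\Delta_H \preccurlyeq_H h_1\cdots h_k$ in $H$, which contradicts $\inf_H(h_1\cdots h_k) = 0$, a direct consequence of $h \in \calL_H^{(k)}$. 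The argument for $g'_j \ne \Delta_G$ is symmetric.

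With these facts in hand, $(g'_k h'_k)\,m_k = g'_k\,(h'_k\,\partial_H(h'_k)) = g'_k\,\Delta_H$, and the same swap rule gives $\Delta_M = \Delta_G\Delta_H = g'_k\,\partial_G(g'_k)\,\Delta_H = (g'_k\,\Delta_H)\,u$ for some $u \in G$, establishing $g'_k\,\Delta_H \preccurlyeq_M \Delta_M$, i.e.\ the base relation. A downward induction on $j$ then yields the explicit formula $m_j = \partial_H(h'_j)$: the inductive step identifies the \ZS{} decomposition of $(g'_{j+1} h'_{j+1})\,m_{j+1} = g'_{j+1}\,\Delta_H$ as $(g'_{j+1},\Delta_H)$ and, meeting with $\partial_M(g'_j h'_j) = \partial_G(g'_j)\join\partial_H(h'_j)$, uses the normal-form relation $\partial_G(g'_j)\wedge_G g'_{j+1} = \id$ to force the meet to $(\id,\partial_H(h'_j))$. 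The remaining checks are then immediate: $m_j = \partial_H(h'_j) \in \pSimples_M$ (nonzero since $h'_j \ne \Delta_H$, not $\Delta_M$ since $m_j \preccurlyeq \Delta_H \ne \Delta_M$), and $(g'_j h'_j)\,m_j = g'_j\,\Delta_H \ne \Delta_G\,\Delta_H = \Delta_M$ holds because $g'_j \ne \Delta_G$. The main obstacle is the intermediate step ruling out $h'_j = \Delta_H$ and $g'_j = \Delta_G$: this requires carefully exploiting both the \ZS{} swap rule and the poset isomorphism $G \times H \cong M$ from \autoref{T:ZappaSzep}, and in particular justifying that the \ZS{} decomposition of products of the form $(g'_{j+1} h'_{j+1})\,m_{j+1}$ is the one predicted by the inductive formula for $m_{j+1}$.
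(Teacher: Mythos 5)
Your route is, conceptually, a detailed unpacking of what the paper compresses into a single line: the paper simply invokes \cite[Lemma~42]{Zappa-Szep} to obtain, by induction, that $\Delta_H \prefix g'_i h'_i m_i \neq \Delta_M$ and $m_i\neq\id$, and then concludes. Your explicit formula $m_j = \partial_H(h'_j)$ (so that $g'_j h'_j m_j = g'_j\Delta_H$) is exactly what makes the paper's assertion $\Delta_H\prefix g'_i h'_i m_i$ true, so you are genuinely reconstructing the content of the cited lemma rather than merely citing it. The identification of which conditions actually need to be checked (with the right-to-left indexing of $\PSeq$) is correct, and the overall inductive structure is sound.

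Where the argument is thin is precisely where you flag it, but let me be more specific about why. First, you use $\partial_M(g'_j h'_j)=\partial_G(g'_j)\join\partial_H(h'_j)$; this is a nontrivial \ZS{} identity and needs to be either proved or cited. Second, and more delicately, to compute $\partial_M(g'_j h'_j)\wedge_M g'_{j+1}\Delta_H$ componentwise via \autoref{T:ZappaSzep}(1) you need the preimage of $g'_{j+1}\Delta_H$ under the poset isomorphism $(g,h)\mapsto g\join h$, and this is a priori not the same as its \ZS{} product decomposition $(g'_{j+1},\Delta_H)$: the map $(g,h)\mapsto g\join h$ and the map $(g,h)\mapsto gh$ are different in general (already $a\meet b=\id$ but $ab\neq a\join b$ in $\Artin{A}_2$). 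So the step ``identifies the \ZS{} decomposition of $g'_{j+1}\Delta_H$ as $(g'_{j+1},\Delta_H)$'' silently equates these two decompositions. That equation does hold in the \ZS{} setting — it is part of what Lemma~42 of \cite{Zappa-Szep} supplies — but it is exactly the fact your proof cannot afford to take for granted. Third, to rule out $g'_j=\Delta_G$ and $h'_j=\Delta_H$ you implicitly need that the \ZS{} product decomposition of the $i$-th normal-form factor of $g\join h$ has $G$-component $g_i$, which again requires a precise statement of how the factorisation from \autoref{T:ZappaSzep}(3) interacts with the componentwise $\meet$; \autoref{T:ZappaSzep}(2) only controls the first and last factor directly. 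In short: your plan is the right one and yields a more informative proof than the paper's appeal to an external lemma, but it should cite (or re-derive) the specific \ZS{} facts that bridge the product decomposition, the join decomposition, and the complement, rather than treating them as interchangeable.
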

\begin{proof}
Using \cite[Lemma~42]{Zappa-Szep}, we have $\Delta_H \prefix g'_i h'_i m_i \neq \Delta_M$ and $m_i\ne \id$ for all $i$ by induction, so $\pi_{g,h}\in\PSeq_M^{(k)}$.
\end{proof}

\begin{proposition}\label{P:PSeqProduct}
Assume that $(M,G,H,\Delta_M, \Delta_G, \Delta_H)$ is a \ZS{} Garside structure, let $\alpha_M,\beta_M$ and $p_M,q_M$ be as in~\autoref{N:GrowthRates}, and let $\alpha_G,\beta_G,\alpha_H,\beta_H\in\{0\}\cup[1,\infty[$ and $p_G,q_G,p_H,q_H\in\NN$ the corresponding constants for~$G$ respectively~$H$.

Then one has the following:
\begin{enumerate} \itemsep 0em \vspace{-0.5\topskip}
\item If one has $\beta_G,\beta_H>0$, then $\alpha_M=\beta_M$ holds.
\item If $\beta_G,\beta_H>1$, then $\alpha_M=\beta_M$ and $p_M=q_M$ hold.
\end{enumerate}
\end{proposition}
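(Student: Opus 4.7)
The plan is to sandwich $\alpha_M$ between two copies of $\beta_M$. First, for a universal upper bound, I would note that any penetration sequence $(s_k,m_k)\ldot\cdots\ldot(s_1,m_1)\in\PSeq_M^{(k)}$ is completely determined by its string of first coordinates $s_k\ldot\cdots\ldot s_1\in\calL_M^{(k)}$ together with the choice of $m_1\in\Div(\partial s_1)$, because the defining relation $m_{i+1}=\partial s_{i+1}\meet s_i m_i$ then prescribes $m_2,\ldots,m_k$. Hence $\big|\PSeq_M^{(k)}\big|\le|\Simples_M|\cdot\big|\calL_M^{(k)}\big|$ holds in any Garside monoid, which, comparing asymptotics through \autoref{N:GrowthRates}, forces $\alpha_M\le\beta_M$ and $p_M\le q_M$ whenever equality holds.

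For the matching lower bound, I would argue that the map $(g,h)\mapsto\pi_{g,h}$ from \autoref{L:PSeqProduct} is injective as a map $\calL_G^{(k)}\times\calL_H^{(k)}\to\PSeq_M^{(k)}$. Reading off the first coordinates of $\pi_{g,h}$ yields the $M$-normal form of $g_x\join h_x$, where $g_x=g_1\cdots g_k$ and $h_x=h_1\cdots h_k$. Because the factors of $g$ and $h$ are proper simples, one has $\inf_G(g_x)=\inf_H(h_x)=0$ and $\sup_G(g_x)=\sup_H(h_x)=k$; by part~(2) of \autoref{T:ZappaSzep} this gives $\inf_M(g_x\join h_x)=0$ and $\sup_M(g_x\join h_x)=k$, so the $M$-normal form has length exactly~$k$. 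Applying the inverse of the bijection $\psi$ of part~(3) of \autoref{T:ZappaSzep} then recovers $(\NF_G(g_x),\NF_H(h_x))=(g,h)$, proving injectivity and yielding $\big|\PSeq_M^{(k)}\big|\ge\big|\calL_G^{(k)}\big|\cdot\big|\calL_H^{(k)}\big|$.

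For claim~(ii), combining this lower bound with the row $\beta_G,\beta_H>1$ of \autoref{P:GrowthRatesProduct} gives
\[
  \big|\calL_G^{(k)}\big|\cdot\big|\calL_H^{(k)}\big|\in\Theta\big(k^{q_G+q_H}\beta_G^k\beta_H^k\big)=\Theta\big(k^{q_M}\beta_M^k\big)\,,
\]
which together with the upper bound forces $\alpha_M=\beta_M$ and $p_M=q_M$. For claim~(i), a short case analysis over the three possibilities $\beta_G,\beta_H\in\{1\}\cup(1,\infty)$ (with $\beta_M$ read off from \autoref{P:GrowthRatesProduct}) shows that the lower bound exhibits exponential growth at rate at least~$\beta_M$ in each case, and rules out $\alpha_M=0$ in the subcase $\beta_M=1$ since $\big|\PSeq_M^{(k)}\big|\ge1$ for all sufficiently large~$k$. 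The main technical point is the injectivity in the second step, which is where parts~(2) and~(3) of \autoref{T:ZappaSzep} do the essential work; without the length-exactly-$k$ check, the image would scatter across the various $\PSeq_M^{(j)}$ with $j\le k$ and the comparison with $\calL_M^{(k)}$ would break down.
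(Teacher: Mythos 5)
Your argument is correct and follows the same route as the paper: the lower bound comes from the injective map $(g,h)\mapsto\pi_{g,h}$ of \autoref{L:PSeqProduct}, the upper bound from the fact that $\PSeq_M$ grows no faster than $\calL_M$, and the conclusion from the product formula in \autoref{P:GrowthRatesProduct}. The only differences are that the paper cites \cite[Corollary~4.4]{GT13} for the inclusion $O(k^{p_M}\alpha_M^k)\subseteq O(k^{q_M}\beta_M^k)$ where you give the short direct count $\big|\PSeq_M^{(k)}\big|\le|\Simples_M|\cdot\big|\calL_M^{(k)}\big|$, and the paper cites \autoref{T:ZappaSzep} wholesale for injectivity where you spell out the $\inf$/$\sup$ check that keeps the image inside $\PSeq_M^{(k)}$ rather than scattering across lower indices; both of your elaborations are sound and match the intended argument.
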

\begin{proof}
By \autoref{L:PSeqProduct}, we have a map $\calL_G^{(k)} \times \calL_H^{(k)} \to \PSeq_M^{(k)}$ given by the assignment $(g,h)\mapsto \pi_{g,h}$.
This map is injective by \autoref{T:ZappaSzep}, so we have
$\big|\PSeq_M^{(k)}\big| \ge \big|\calL_M^{(k)}\big| \cdot \big|\calL_N^{(k)}\big|$.
Thus $k^{q_G+q_H} (\beta_G\beta_H)^k \in O(k^{p_M} {\alpha_M}^k) \subseteq O(k^{q_M} {\beta_M}^k)$, where the final inclusion holds by~\cite[Corollary~4.4]{GT13}.

If $\beta_G,\beta_H>0$, then we have $\beta_M=\beta_G\beta_H$ by \autoref{P:GrowthRatesProduct}, and thus obtain $\alpha_M=\beta_M$.
Similarly, if $\beta_G,\beta_H>1$, then we have $\beta_M=\beta_G\beta_H$ and $q_M=q_G+q_H$ by \autoref{P:GrowthRatesProduct}, and thus obtain $\alpha_M=\beta_M$ and $p_M=q_M$.
\end{proof}

\begin{notation}
Assume that $M$ is a Garside monoid with set of proper simple elements $\pSimples$.
For $s\in \pSimples$ and $k\ge 1$, we define
\[
  \calL_M^{(k)}(s) := \calL_M^{(k)} \cap (\pSimples)^*\ldot s
    = \Big\{ s_1\ldot\cdots\ldot s_k\in \calL_M^{(k)} : s_k = s \Big\}
  \;.
\]
\end{notation}

\begin{lemma}[{\cite[Lemma~4.10]{GT13}}]\label{L:RestrictedNF}
If $M$ is a Garside monoid such that all proper simple elements of $M$ are essential and the language $\calL_M$ is essentially transitive, then one has
$\big|\calL_M^{(k)}(s)\big| \in \Theta\Big(\big|\calL_M^{(k)}\big|\Big)$ for all $s\in\pSimples$.
\end{lemma}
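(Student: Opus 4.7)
The plan is to translate the statement into a question about walks in the acceptor~$\Acceptor$ and apply Perron--Frobenius theory. Let $A$ denote the adjacency matrix of~$\Acceptor$ (whose vertex set is~$\pSimples$). A word $s_1|\cdots|s_k \in \calL_M^{(k)}$ corresponds bijectively to a walk $s_1 \to \cdots \to s_k$ of length~$k-1$ in~$\Acceptor$, whence $|\calL_M^{(k)}| = \mathbf{1}^{\mathsf T} A^{k-1} \mathbf{1}$ and $|\calL_M^{(k)}(s)| = \mathbf{1}^{\mathsf T} A^{k-1} e_s$, where $\mathbf{1}$ denotes the all-ones vector indexed by~$\pSimples$ and~$e_s$ is the standard basis vector for~$s$. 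The upper bound $|\calL_M^{(k)}(s)| \le |\calL_M^{(k)}|$ is then immediate.

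Under the hypotheses of the lemma, every $s \in \pSimples$ lies in a non-singleton strongly connected component of~$\Acceptor$ (essentiality) and there is only one such component (essential transitivity); hence~$\Acceptor$ is strongly connected and $A$ is irreducible. By the Perron--Frobenius theorem for non-negative irreducible matrices, the spectral radius $\beta=\beta_M$ of~$A$ is a simple eigenvalue admitting strictly positive left and right eigenvectors, and the eigenvalues of~$A$ of modulus~$\beta$ are precisely $\beta\cdot e^{2\pi i j/p}$ for $j=0,\ldots,p-1$, where~$p$ is the period of~$A$.

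Fix $r \in \{0,\ldots,p-1\}$. Along the subsequence of $k$ with $k-1 \equiv r \pmod p$, the normalised powers $A^{k-1}/\beta^{k-1}$ converge to a non-negative matrix~$B_r$ whose $(t,s)$-entry is strictly positive precisely when the cyclic classes of~$t$ and~$s$ differ by~$r$ modulo~$p$; this follows either by applying the primitive Perron--Frobenius theorem to the restriction of~$A^p$ to each cyclic class, or by working with the Cesàro averages $\tfrac{1}{p}\sum_{j=0}^{p-1} A^{k-1+j}/\beta^{k-1+j}$, which converge unconditionally. Along this subsequence both $|\calL_M^{(k)}(s)|/\beta^{k-1}$ and $|\calL_M^{(k)}|/\beta^{k-1}$ therefore converge to strictly positive limits, so their ratio tends to a strictly positive constant~$c_r$. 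Setting $c := \tfrac12 \min_r c_r > 0$ then yields $|\calL_M^{(k)}(s)| \ge c\,|\calL_M^{(k)}|$ for all $k$ sufficiently large, giving the claimed $\Theta$-estimate.

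The main obstacle is the potential periodicity of~$A$: \autoref{example:aa=bb} already shows that~$A$ need not be primitive even under the lemma's hypotheses, so an argument treating $A^{k-1}/\beta^{k-1}$ as a single convergent sequence will fail in general, and the cyclic decomposition (or equivalently Cesàro averaging) is the technical point. A purely combinatorial substitute---extending each word of $\calL_M^{(k)}$ by a fixed shortest path from its last letter to~$s$---produces an injection $\calL_M^{(k)} \to \bigcup_{c=0}^D \calL_M^{(k+c)}(s)$, where $D$ is chosen so that $\calL_M$ is essentially $D$-transitive, but converting the resulting inequality $|\calL_M^{(k)}| \le \sum_{c=0}^D |\calL_M^{(k+c)}(s)|$ back into a lower bound on $|\calL_M^{(k)}(s)|$ alone runs into exactly the same periodicity obstruction.
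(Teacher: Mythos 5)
Your argument is correct, and the translation to walks in the acceptor~$\Acceptor$ is the right framework: under the two hypotheses $\Acceptor$ is strongly connected, so its adjacency matrix $A$ is irreducible and Perron--Frobenius applies. (The paper itself only cites \cite[Lemma~4.10]{GT13} without reproducing a proof, so there is no in-text argument to compare against.) Your cyclic-class decomposition handles the periodicity correctly; the one loose end is the parenthetical ``or equivalently Ces\`aro averaging'': convergence of the Ces\`aro averages $\tfrac1p\sum_{j}A^{k-1+j}/\beta^{k-1+j}$ gives control only of a windowed sum $\sum_{j}\big|\calL_M^{(k+j)}(s)\big|/\beta^{k+j-1}$, not of the individual terms, so on its own it does not recover the claim; one really does need the block-primitive structure of $A^p$ (or the full spectral description) behind it.

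More significantly, the periodicity detour is avoidable. Let $w$ be a strictly positive left Perron eigenvector of $A$, so $w^{\mathsf T}A = \beta w^{\mathsf T}$, and set $c_- := \min_t w_t > 0$, $c_+ := \max_t w_t$. Since $c_+^{-1}w \le \mathbf{1} \le c_-^{-1}w$ componentwise and $A^{k-1}e_s$, $A^{k-1}\mathbf{1}$ are nonnegative vectors, one gets directly
\[
  \big|\calL_M^{(k)}(s)\big| \;=\; \mathbf{1}^{\mathsf T}A^{k-1}e_s
  \;\ge\; c_+^{-1}\,w^{\mathsf T}A^{k-1}e_s \;=\; c_+^{-1}\beta^{k-1}w_s
\]
and
\[
  \big|\calL_M^{(k)}\big| \;=\; \mathbf{1}^{\mathsf T}A^{k-1}\mathbf{1}
  \;\le\; c_-^{-1}\,w^{\mathsf T}A^{k-1}\mathbf{1} \;=\; c_-^{-1}\beta^{k-1}\,(w^{\mathsf T}\mathbf{1})\;,
\]
so $\big|\calL_M^{(k)}(s)\big| \ge \tfrac{c_-\,w_s}{c_+\,w^{\mathsf T}\mathbf{1}}\,\big|\calL_M^{(k)}\big|$ for every $k$, with no limits and no case distinction by residue. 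Together with the trivial upper bound $\big|\calL_M^{(k)}(s)\big|\le\big|\calL_M^{(k)}\big|$ this gives the $\Theta$-estimate at once; irreducibility is used only to guarantee $w>0$ componentwise. Your closing remark about the purely combinatorial substitute is well taken---that approach does indeed stall at the same place, for the reason you give.

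Also a small notational slip: the constant you call $c_r$ depends on $s$ as well, so it should read $c_{r,s}$; since $\pSimples$ is finite this does not affect the conclusion.
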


\begin{theorem}\label{T:PSeqProduct}
Assume that $(M,G,H,\Delta_M, \Delta_G, \Delta_H)$ is a \ZS{} Garside structure, that all proper simple elements of~$H$ are essential, that the language~$\calL_H$ is essentially transitive, and that the exponential growth rates~$\beta_G$ of~$\calL_G$  and~$\beta_H$ of~$\calL_H$ satisfy $\beta_G,\beta_H>1$.

If $\nu_{k}$ is the uniform probability measure on~$\calL_M^{(k)}$ and~$\mu_\Atoms$ is the uniform probability distribution on the set~$\Atoms$ of atoms of~$M$, then the expected value
$\Expect_{\nu_{k} \times \mu_\Atoms}[\pd]$ diverges, that is, one has $\lim_{k\to\infty} \Expect_{\nu_{k} \times \mu_\Atoms}[\pd] = \infty$.
\end{theorem}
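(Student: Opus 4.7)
The strategy is to combine the equality $\alpha_M=\beta_M$ (and $p_M=q_M$) from \autoref{P:PSeqProduct}(ii) with a direct counting of pairs $(x,a)\in\calL_M^{(k)}\times\Atoms_M$ realising large penetration distance. Since $\alpha_M=\beta_M$, the criterion of \cite[Theorem~4.7]{GT13} for uniform boundedness of $\Expect[\pd]$ just fails to apply; the plan is to turn this borderline failure into honest divergence by lower-bounding $P_{\nu_k\times\mu_\Atoms}(\pd_M(x,a)\ge\ell)$ by a positive constant independent of $k$ for every $\ell\le k/2$, and then using
\[
  \Expect_{\nu_k\times\mu_\Atoms}[\pd_M]\ge\sum_{\ell=1}^{\lfloor k/2\rfloor}P_{\nu_k\times\mu_\Atoms}(\pd_M\ge\ell)
\]
to conclude $\Expect[\pd_M]\gtrsim k/2\to\infty$.

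The key tool is the injection $\calL_G^{(\ell)}\times\calL_H^{(\ell)}\hookrightarrow\PSeq_M^{(\ell)}$, $(g,h)\mapsto\pi_{g,h}$, established in \autoref{L:PSeqProduct}. For each pair $(g,h)$ in the domain, the penetration sequence $\pi_{g,h}$ encodes the innermost $\ell$ factors of the normal form of an element of $M$ together with a specific atom $a\in\Atoms_H$ that realises a penetration of length $\ell$. I would then extend $(g,h)$ on the left through the \ZS{} bijection of \autoref{T:ZappaSzep}(iii): choose $\tilde g\in\calLbar_G$ and $\tilde h\in\calLbar_H$, both of length $k$, with $\tilde g$ ending in $g$ and $\tilde h$ ending in $h$ in the respective normal forms, and set $x=\tilde g\join\tilde h\in\calL_M^{(k)}$. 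The number of such $\tilde h$ is of order $|\calL_H^{(k-\ell)}|$ by \autoref{L:RestrictedNF} applied to $H$; this is the precise point at which the hypothesis that all proper simples of $H$ are essential and that $\calL_H$ is essentially transitive is used. On the $G$-side, one needs only the weaker counting provided by the identities $\beta_M=\beta_G\beta_H$ and $q_M=q_G+q_H$ from \autoref{P:GrowthRatesProduct}.

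Assembling these bounds with \autoref{C:GrowthRatesProduct}, the ratio
\[
  \frac{|\calL_G^{(\ell)}|\,|\calL_H^{(\ell)}|\cdot|\calL_G^{(k-\ell)}|\,|\calL_H^{(k-\ell)}|}{|\calL_M^{(k)}|\cdot|\Atoms_M|}
\]
turns out to be bounded below by a positive constant uniformly in $\ell\le k/2$, which delivers the required lower bound on $P(\pd_M\ge\ell)$ and thus divergence of the expectation. The main obstacle lies in the extension step: one has to verify that extending the core pair $(g,h)$ to $(\tilde g,\tilde h)$ actually yields a pair $(x,a)$ whose penetration distance is at least $\ell$, i.e.\ that $\pi_{g,h}$ really appears as (a suffix of) the penetration sequence of $(x,a)$. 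This requires careful control of how the $\inf$-contributions from the two factors interact with the $M$-normal form under left-extension, so as to rule out scenarios in which the extension absorbs a $\Delta$ that shifts the alignment and shortens the observed penetration.
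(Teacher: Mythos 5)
The paper's own proof is considerably simpler and avoids the obstacle you flag: rather than stratifying the tail probability $P(\pd\ge\ell)$ over $\ell\le k/2$ and left-extending short pairs, it takes $g\in\calL_G^{(k)}$ and $h\in\calL_H^{(k)}(\rightcomplement_H a)$ of \emph{full} length $k$ from the outset. For such a pair, $\pi_{g,h}$ is a length-$k$ penetration sequence and therefore witnesses $\pd(x_{(g,h)},a)\ge k$ directly; the map $(g,h)\mapsto x_{(g,h)}$ is injective, and applying \autoref{L:RestrictedNF} (to $H$) together with \autoref{C:GrowthRatesProduct} shows that these pairs account for a constant fraction of $\calL_M^{(k)}\times\Atoms$. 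Hence $\Expect_{\nu_k\times\mu_\Atoms}[\pd]\gtrsim k$ in one step, with no extension argument needed.

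Your route contains a genuine gap at exactly the point you identify. When you left-extend $(g,h)\in\calL_G^{(\ell)}\times\calL_H^{(\ell)}$ to $(\tilde g,\tilde h)$ of length $k$ and set $x=\tilde g\join\tilde h$, there is no a priori reason for the $M$-normal form of $x$ to end in the $\ell$ letters $\psi(g,h)$, nor for $\pi_{g,h}$ to occur as a suffix of the penetration sequence associated to $(x,a)$. Left normal form is computed from the left, so prepending letters can alter \emph{every} subsequent factor; the claim that extension preserves the innermost $\ell$ factors would need a real argument, and you do not supply one. The paper's choice $\ell=k$ makes the whole issue vanish, which is precisely why it is the right construction. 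A second, minor point: your opening appeal to \autoref{P:PSeqProduct} ($\alpha_M=\beta_M$, $p_M=q_M$) is not actually used in the estimate and is not enough to conclude divergence on its own --- $\alpha_M=\beta_M$ merely says \cite[Theorem~4.7]{GT13} is inapplicable, not that the expectation is unbounded. The paper does not invoke \autoref{P:PSeqProduct} in its proof of this theorem. Finally, your claim that \autoref{P:GrowthRatesProduct} supplies the needed count of $G$-side extensions $\tilde g$ of a fixed $g$ is not justified: that proposition compares $|\calL_M^{(k)}|$ with $|\calL_G^{(k)}|\cdot|\calL_H^{(k)}|$ but says nothing about the number of normal form words of a given length ending in a prescribed suffix. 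For $H$ you correctly invoke \autoref{L:RestrictedNF}, whose hypotheses are assumed; for $G$ no such hypotheses are assumed, and you would need a different argument (e.g.\ summing over all $\tilde g\in\calL_G^{(k)}$ instead of fixing $g$ and counting extensions).
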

\begin{proof}
For any atom $a\in\Atoms\cap H$, any $g\in\calL_G^{(k)}$ and any $h\in\calL_H^{(k)}(\rightcomplement_H a)$, 
the sequence $\pi_{g,h}$ defined in \autoref{N:PSeqProduct} is a penetration sequence establishing $\pd(x_{(g,h)},a)\ge k$ for some $x_{(g,h)}\in M$.  Moreover, the map $(g,h)\mapsto x_{(g,h)}$ is injective.

Using \autoref{L:RestrictedNF} and \autoref{C:GrowthRatesProduct}, we have
\begin{align*}
 \Expect_{\nu_{k} \times \mu_\Atoms}[\pd]
   & \ge \sum_{a\in\Atoms\cap H} k
                 \frac{\big|\calL_G^{(k)}\big|\cdot \big|\calL_H^{(k)}(\rightcomplement_H a)\big|}
                      {\big|\calL_M^{(k)}\big|\cdot \big|\Atoms\big|}
     \in \Theta(k)
   \;,
\end{align*}
proving $\lim_{k\to\infty}\Expect_{\nu_{k} \times \mu_\Atoms}[\pd] = \infty$ as claimed.
\end{proof}

\begin{example}\label{E:UnboundedExpectedPD}
We see in particular that essential transitivity is necessary for the statement of \autoref{T:BoundedExpectedPD}:

Consider $M=G\times H$, where $G=H=\Artin{A}_2$.
The lattice of simple elements of $G=H$ and the acceptor~$\Gamma$ of $\calL_G=\calL_H$ are shown in \autoref{F:UnboundedExpectedPD}.
One sees that all proper simple elements of~$G$, respectively of~$H$, and thus of~$M$, are essential and the languages~$\calL_G$ and~$\calL_H$ are essentially transitive, and it is easy to check that $\beta_G=\beta_H=2$.
Hence, by \autoref{T:PSeqProduct},~$M$ has unbounded expected penetration distance.

Every proper simple element of~$M$ is essential, so~$M$ satisfies all the hypotheses of \autoref{T:BoundedExpectedPD}, except for essential transitivity.

  \begin{figure}
  \hfill
  \parbox[c]{0.45\textwidth}{
    \begin{xy}
      0;<3em,0em>:<0em,3em>::
      (1,-0.6)*+{\text{Hasse diagram}};
      (1,0)*+{\id}="e";
      (0,1)*+{a}="a";
      (2,1)*+{b}="b";
      (0,2)*+{ab}="ab";
      (2,2)*+{ba}="ba";
      (1,3)*+{\Delta}="Delta";
      {\ar@{->}^{a} "e";"a"};
      {\ar@{->}_{b} "e";"b"};
      {\ar@{->}^{b} "a";"ab"};
      {\ar@{->}_{a} "b";"ba"};
      {\ar@{->}^{a} "ab";"Delta"};
      {\ar@{->}_{b} "ba";"Delta"};
    \end{xy}
  } \hfill
  \parbox[c]{0.45\textwidth}{
    \begin{xy}
      0;<3em,0em>:<0em,3em>::
      (1,-0.75)*+{\Acceptor};
      (0,2)*+{a}="a";
      (2,0)*+{b}="b";
      (0,0)*+{ab}="ab";
      (2,2)*+{ba}="ba";
      {\ar@(u,l)_{a} "a";"a"};
      {\ar@{->}_{ab} "a";"ab"};
      {\ar@{->}_{b} "ab";"b"};
      {\ar@/_/_{ba} "ab";"ba"};
      {\ar@(d,r)_{b} "b";"b"};
      {\ar@{->}_{ba} "b";"ba"};
      {\ar@{->}_{a} "ba";"a"};
      {\ar@/_/_{ab} "ba";"ab"};
    \end{xy}
  } \hfill {}
  \caption{The lattice of simple elements and the acceptor for the regular language of words in normal form for the monoid from \autoref{E:UnboundedExpectedPD}.}
  \label{F:UnboundedExpectedPD}
  \end{figure}
\end{example}

\comment{
\begin{remark}
Consider the monoid $G = \langle a, b \st aba = b^2\rangle^+$; it is essentially transitive, but there are proper simple elements that are not essential.

The statement $\alpha<\beta$ holds, but two things go wrong in the proof of \autoref{T:BoundedExpectedPD}:
\begin{enumerate} \itemsep 0em \vspace{-0.5\topskip}
\item
There are vertices $(s,m)\in\mathcal{P}$ for which $(\PSeqAcceptor y)_{(s,m)}=(\widetilde{\PSeqAcceptor}y)_{(s,m)}$, namely $(ab,a)$, $(a,ba)$ and $(b,b)$.
Specifically, we have
\begin{align*}
 &(\PSeqAcceptor y)_{(b,b)} =  y_{(ab,ab)} + y_{(bab,a)} = (\widetilde{\PSeqAcceptor}y)_{(b,b)} \;,\\
 &(\PSeqAcceptor y)_{(ab,a)}=y_{(a,b)}+y_{(b^2,b)}+y_{(ba,b)}=(\widetilde{\PSeqAcceptor}y)_{(ab,a)} \;,
  \text{ and} \\
 &(\PSeqAcceptor y)_{(a,ba)}=y_{(a,b)}+y_{(b^2,b)}+y_{(ba,b)}=(\widetilde{\PSeqAcceptor}y)_{(a,ba)} \;.
\end{align*}
In this particular example, each offending vertex $(s,m)$ of this type has out-degree~$0$, whence one can increase $y_{(s,m)}$ without changing $(\widetilde{\PSeqAcceptor}y)_{(s,m)}$ or $(\PSeqAcceptor y)_{(s,m)}$, reducing the ratio $\frac{(\PSeqAcceptor y)_{(s,m)}}{y_{(s,m)}}$.

Unfortunately, this doesn't hold in general: in \texttt{ToyGarsideGroup2}, we have vertices of this kind that have positive out-degree.

\item
There are vertices $(s,m)$ for which $y_{(s,m)}=0$; in particular, this is necessarily the case for all vertices of in-degree~$0$.  Specifically, we have $y_{(b^2,1)}=y_{(b^2,b)}=0$.

In this particular example, each offending vertex of this type in $\mathcal{P}$ has only edges ending in a vertex of out-degree~0, so this part of the problem can also be solved by modifying $y$.

Unfortunately, this doesn't hold in general either: in \texttt{ToyGarsideGroup2}, we have vertices of this kind (also of in-degree~0) that have edges ending in vertices of positive out-degree.
\end{enumerate}

It would be nice if we could either find an example showing that the requirement that all proper simple elements are essential is needed, or patch the proof to deal with non-essential proper simple elements if it isn't.  I have absolutely no idea which of these\dots
\end{remark}
}

\section{Artin monoids}\label{S:ArtinMonoids}

The aim of this section is to determine the essential simple elements
of Artin monoids of spherical type and to determine when these monoids
are essentially transitive.

In \cite[Lemma~3.4]{Caruso13}, Caruso shows that, in our terminology,
the language of normal forms of an Artin monoid of type
$\Artin{A}$ is essentially 5-transitive.  In Lemmata
\ref{connecting-atoms} to \ref{rev-u} we will generalise Caruso's
construction and reproduce this result in \autoref{type-A-ess-trans}.
We then go on to extend this result to all irreducible Artin monoids of spherical type.

For the rest of this section we will assume that $M$ is an Artin monoid of spherical type with set of atoms~$\Atoms$.

\begin{proposition}\label{Artin-ProperSimplesEssential}
  Every proper simple element of~$M$ is essential, i.e.\ $\Ess = \pSimples$.
\end{proposition}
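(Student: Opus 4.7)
The strategy is to exhibit, for every proper simple element $s\in\pSimples$ and every $K\in\NN$, an explicit word of length $2K+1$ in $\calL$ whose middle letter is $s$. The construction uses parabolic submonoids supported on $\Start(s)$ and $\Fin(s)$, together with the normal form criterion of \autoref{Artin-normal-form}.

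First I would observe that, for $s\in\pSimples$, the sets $J:=\Start(s)$ and $J':=\Fin(s)$ are non-empty proper subsets of $\Atoms$. Non-emptiness is immediate from $s\ne\id$. For the properness, I invoke the classical fact that, under the bijection between simple elements of~$M$ and elements of the associated finite Coxeter group~$W$, the sets $\Start(s)$ and $\Fin(s)$ correspond to the left and right descent sets of the Coxeter group element representing~$s$, and that the longest element $w_0$ (which lifts to $\Delta$) is the unique element of $W$ having full left (equivalently, right) descent set; since $s\neq\Delta$, both $\Start(s)$ and $\Fin(s)$ are proper subsets of $\Atoms$.

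Next, let $M_J$ and $M_{J'}$ denote the parabolic submonoids of $M$ generated by $J$ and $J'$ respectively; these are themselves irreducible components of Artin monoids of spherical type, with their own Garside elements $\Delta_J$ and $\Delta_{J'}$. By the parabolic submonoid property, $\Delta_J,\Delta_{J'}\in\Simples_M$, and since $J,J'$ are non-empty but proper, they are in fact in $\pSimples$. Being Garside elements of their respective parabolic submonoids, they satisfy $\Start(\Delta_J)=\Fin(\Delta_J)=J$ and $\Start(\Delta_{J'})=\Fin(\Delta_{J'})=J'$.

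Finally, for $K\in\NN$ consider the word
\[
  w_K \;:=\; \underbrace{\Delta_J\ldot\cdots\ldot\Delta_J}_{K\text{ copies}} \ldot\, s \,\ldot \underbrace{\Delta_{J'}\ldot\cdots\ldot\Delta_{J'}}_{K\text{ copies}}.
\]
By \autoref{Artin-normal-form}, checking that $w_K\in\calL^{(2K+1)}$ reduces to verifying $\Fin(\cdot)\supseteq\Start(\cdot)$ for each adjacent pair: consecutive $\Delta_J$'s give $J\supseteq J$ (and likewise for $\Delta_{J'}$); the junction $\Delta_J\,|\,s$ gives $J=\Start(s)$; the junction $s\,|\,\Delta_{J'}$ gives $\Fin(s)=J'$. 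Since $s$ appears as the middle letter of $w_K$ and $K$ was arbitrary, $s$ is essential. The only non-trivial input is the Coxeter-theoretic characterisation of $\Delta$ in step one, which I would cite from the standard theory of finite Coxeter groups rather than reproduce.
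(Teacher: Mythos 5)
Your proof is correct and uses essentially the same idea as the paper: both surround $s$ with the Garside element of the parabolic submonoid generated by $\Start(s)$ on the left, the only cosmetic difference being that the paper repeats a single atom $a\in\Fin(s)$ on the right whereas you symmetrically use $\Delta_{\Fin(s)}$. (One small slip in your wording: $M_J$ is an Artin monoid of spherical type but need not be irreducible — fortunately your argument never uses irreducibility, only that it is a parabolic submonoid with Garside element $\Delta_J$ and $\Start(\Delta_J)=\Fin(\Delta_J)=J$.)
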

\begin{proof}
  There are no proper simple elements in $\Artin{A}_1$, so we can assume that~$|\Atoms|>1$.
  Given any $x \in \pSimples$ pick an atom $a \in \Fin(x)$ and let
  $x_i = \rightJoin \Start(x_{i-1})$ for $i=1,2,\ldots$, where $x_0 = x$.
  Then we have $a\in\pSimples$ and
  \[ \cdots | x_2 | x_1 | x | a | a | \cdots. \]
  Recall that in any Artin monoid of spherical type, the Garside element is the ($\prefix$- and~$\suffix$-) least common upper bound of the set of atoms.
  We obtain $x_i\in\pSimples$ by induction:
  If $x_{i-1}\in\pSimples$, then $\Start(x_{i-1})$ and $\Atoms\setminus\Start(x_{i-1})$ are both non-empty.
  Thus, $x_i$ is the Garside element of a non-trivial proper parabolic submonoid of~$M$; in particular, $x_i\in\pSimples$.
%
\end{proof}

\begin{proposition} \label{reducible-non-transitive}
  If $M$ is reducible then it is not essentially transitive.
\end{proposition}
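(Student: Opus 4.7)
The plan is to reduce the statement to Proposition \ref{product-not-transitive} via the direct product decomposition of reducible Artin monoids. By definition, $M = A_{(I,C)}$ is reducible precisely when the Coxeter graph $(I,C)$ is disconnected. As noted in the discussion preceding \autoref{T:ArtinClassification}, generators in distinct connected components of $(I,C)$ commute, so if we split $I = J \sqcup K$ into a non-trivial union of unions of connected components, then $M$ decomposes as the (internal) direct product $M = G \times H$, where $G = A_{(J,C|_J)}$ and $H = A_{(K,C|_K)}$ are the parabolic submonoids generated by $J$ and $K$, respectively; in particular, both $G$ and $H$ are non-trivial and contain proper simple elements.

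Next I would observe that every internal direct product is in particular an internal \ZS{} product in the sense of \autoref{D:ZappaSzepProduct}: for any $x \in M$, the existence and uniqueness of elements $g \in G$, $h \in H$ with $x = gh$ is immediate from the direct product structure, and since $G$ and $H$ commute elementwise, one has $gh = hg$, giving also the unique decomposition $x = hg$ with factors in $H$ and $G$. Hence $M = G \zs H$.

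Finally, \autoref{product-not-transitive} applies directly, yielding that $M$ is not essentially transitive. The only step that requires any care is verifying that the direct product decomposition gives a genuine \ZS{} structure at the level of Garside monoids; however, this is essentially formal since commutativity between the two factors makes the uniqueness-of-decomposition axiom trivial to check. I do not expect any real obstacle: the result is a straightforward specialisation of \autoref{product-not-transitive} to the case of direct products, and the main content of the statement is really the recognition that reducibility of an Artin monoid is exactly a direct-product decomposition at the Garside level.
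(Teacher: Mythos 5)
Your proof is correct and follows essentially the same route as the paper: the paper's proof simply observes that a reducible Artin monoid decomposes as a direct product of irreducibles and invokes \autoref{product-not-transitive}. You have merely unpacked the two steps the paper leaves implicit — that disconnectedness of the Coxeter graph yields the direct product decomposition, and that a direct product is a special case of a \ZS{} product — both of which are straightforward. One small inaccuracy worth flagging: you assert that $G$ and $H$ ``contain proper simple elements,'' but if a component $J$ is a single vertex then $G = A_{(J,C|_J)} \cong \NN$ has no proper simple elements of its own; what the proof of \autoref{product-not-transitive} actually needs is that $\Delta_G$ and $\Delta_H$ are \emph{proper simple elements of $M$}, which holds whenever $\emptyset \ne J \ne I$, so the argument goes through unchanged.
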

\begin{proof}
  If $M$ is reducible then $M = M_1 \times M_2 \times \cdots \times
  M_k$ where the $M_i$ are irreducible.  Hence the claim follows by
  \autoref{product-not-transitive}.
\end{proof}

\begin{lemma} \label{connecting-atoms}
  Suppose that $a$ and $b$ are two atoms which lie in the same
  connected component of the Coxeter graph of~$M$.  Then there exist
  simple elements~$x$ and~$y$ such that $\Start(x) = \{ a \}$,
  $\Fin(x) = \{ b \}$, $\Start(y) = \Atoms\setminus\{a\}$ and $\Fin(y)
  = \Atoms\setminus\{b\}$.
\end{lemma}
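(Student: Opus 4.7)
The plan is to construct $x$ and $y$ separately, in both cases using a shortest path $a = a_0, a_1, \ldots, a_k = b$ in the Coxeter graph, which exists by the hypothesis on $a$ and $b$.

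For $x$, I propose taking $x := a_0 a_1 \cdots a_k$. Because the path is minimal, the $a_i$ are pairwise distinct, and consecutive $a_{i-1}, a_i$ are joined by an edge of the Coxeter graph, so $m_{a_{i-1},a_i}\ge 3$. Inducting on $k$: assuming $a_0 \cdots a_{k-1}$ is reduced with finishing set $\{a_{k-1}\}$, distinctness of the $a_i$ ensures $a_k\notin\Fin(a_0\cdots a_{k-1})$, so $a_0 \cdots a_k$ is again reduced. Moreover, no commutation and no braid relation can be applied to any substring of $a_0 \cdots a_k$: a commutation requires two adjacent commuting letters (but consecutive $a_{i-1}, a_i$ satisfy $m\ge 3$), and a braid relation $\langle c,d\rangle^{m_{c,d}} = \langle d,c\rangle^{m_{c,d}}$ with $m_{c,d} \ge 3$ requires three consecutive positions to be filled from a two-element set, which is impossible since the $a_i$ are distinct. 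Hence, by the Matsumoto-type result quoted just before \autoref{Artin-start-finish}, $a_0 \cdots a_k$ is the unique reduced expression for~$x$, so $x \in \Simples$ with $\Start(x) = \{a\}$ and $\Fin(x) = \{b\}$.

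For $y$, I take $y := \complement s$ for a suitable $s \in \Simples$. Let $\sigma$ denote the monoid automorphism of~$M$ induced by conjugation by~$\Delta$ in the ambient Artin group: on~$\Simples$ it coincides with $\complement^2$, on~$\Atoms$ it is the diagram automorphism induced by~$\Delta$ (so it preserves connected components of the Coxeter graph), and $\sigma^2 = \mathrm{id}$ because $\Delta^2$ is central in the spherical Artin group. Since $\sigma(b)$ still lies in the connected component of $a$, applying the construction of~$x$ to the pair $(\sigma(b), a)$ yields an $s \in \Simples$ with $\Start(s) = \{\sigma(b)\}$ and $\Fin(s) = \{a\}$. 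With $y := \complement s$, \autoref{Artin-start-finish} gives $\Start(y) = \Atoms \setminus \Fin(s) = \Atoms \setminus \{a\}$. Applying \autoref{Artin-start-finish} a second time, now to $\complement s$, yields $\Fin(\complement s) = \Atoms \setminus \Start(\complement^2 s) = \Atoms \setminus \sigma(\Start(s)) = \Atoms \setminus \{\sigma^2(b)\} = \Atoms \setminus \{b\}$, as required.

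The main obstacle I anticipate is the bookkeeping for~$y$: one must identify $\complement^2$ with the diagram automorphism $\sigma$ and invoke $\sigma^2 = \mathrm{id}$. These are standard facts about spherical Artin monoids, but they would warrant an explicit justification in the proof. If $M$ is reducible, then $a$ and $b$ are contained in a single irreducible factor~$M_i$; in that case I would reduce to the irreducible setting by carrying out the construction inside~$M_i$ and padding~$y$ with the Garside elements $\Delta_{M_j}$ of the remaining factors.
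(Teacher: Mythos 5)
Your construction of $x$ is essentially the paper's: take a path in the Coxeter graph, note that because the letters are pairwise distinct and consecutive letters are joined by an edge (so $m\ge 3$), no defining relation applies to any subword, and conclude the word is the unique reduced expression of a simple element with singleton starting and finishing sets. Using a shortest path rather than an embedded one is immaterial. For $y$, however, your route is genuinely different, and it quietly repairs a gap in the published argument. The paper sets $y=\complement(a_k\cdots a_1)$ and claims, ``similarly'', that $\Fin(y)=\Atoms\setminus\Start(a_k\cdots a_1)$. But the true symmetric dual of \autoref{Artin-start-finish} (obtained by applying $\rev$) is $\Fin(\rightcomplement t)=\Atoms\setminus\Start(t)$, not $\Fin(\complement t)=\Atoms\setminus\Start(t)$. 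Applying \autoref{Artin-start-finish} to $\complement t$ in place of $t$, as you do, gives the correct identity $\Fin(\complement t)=\Atoms\setminus\Start(\complement^2 t)=\Atoms\setminus\sigma(\Start(t))$, where $\sigma=\complement^2$ is the diagram automorphism induced by $\Delta$; so the paper's choice satisfies $\Fin(y)=\Atoms\setminus\{\sigma(b)\}$, which equals the required $\Atoms\setminus\{b\}$ only when $\sigma$ fixes $b$ (i.e., for the types with $\Delta$ central). Concretely, in $\Artin{A}_3$ with $a=1$, $b=3$, the paper's $y=\complement(321)=232$ has $\Fin(y)=\{2,3\}$ rather than the desired $\{1,2\}$, whereas your prescription (use $s$ with $\Start(s)=\{\sigma(b)\}=\{1\}$, $\Fin(s)=\{a\}=\{1\}$, so $s=\sigma_1$) gives $y=\complement(\sigma_1)=23121$, which does have $\Start(y)=\{2,3\}$ and $\Fin(y)=\{1,2\}$. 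Your pre-composition with $\sigma$ and the involutivity $\sigma^2=\mathrm{id}$ (which can also be seen directly from $w_0^2=e$ in $W$, sidestepping any appeal to centrality of $\Delta^2$) close the computation; $\sigma$ preserves components since it restricts to the diagram automorphism of each irreducible factor, so the final reducible-case remark is in fact unnecessary but harmless. Your proof is correct, and the lemma remains true as stated.
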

\begin{proof}
  Suppose that we have an embedded path $a=a_1
  \overset{i_1}{\text{ --- }} a_2 \overset{i_2}{\text{ --- }} \cdots
  \overset{i_{k-1}}{\text{ --- }} a_k=b$ in the Coxeter graph of $M$.
  Let $x = a_1 a_2 \cdots a_k$.  There are no subwords which match any
  of the relations, hence $\Start(x) = \{ a \}$ and $\Fin(x) = \{ b
  \}$.  Furthermore, the only way $x$ can be written as a product is
  as $x = (a_1 \cdots a_p)(a_{p+1} \cdots a_k)$, which can never be in
  normal form.  Hence~$x$ has canonical length at most one, that is,~$x$ is a simple
  element.

  Let $y = \complement (a_k \cdots a_2 a_1)$, then $\Start(y) = \Atoms
  \setminus \Fin(a_k \cdots a_2 a_1) = \Atoms \setminus \{a\}$ and,
  similarly, $\Fin(y) = \Atoms \setminus \Start(a_k \cdots a_2 a_1) =
  \Atoms \setminus \{b\}$.  
\end{proof}

Suppose that $k\ge 2$ and that $1 \text{ --- } 2 \text{ ---} \cdots \text{--- } (k-1)$ is
a subgraph of the Coxeter graph of~$M$, so we have a parabolic submonoid of type $\Artin{A}_{k-1}$.  In this situation we
have a map from $\Artin{A}_{k-1}$ to the symmetric group on the set
$\{ 1, 2, \ldots, k \}$ given by mapping each atom $i$ to the
transposition $(i, i+1)$.  This map is a bijection when we restrict to
the set of simple elements of this submonoid.

Suppose that $a \in \Atoms_{\Artin{A}_{k-1}}$ and $x \in
\Simples_{\Artin{A}_{k-1}}$ are an atom and a simple element of this
submonoid.  Let $\pi$ be the permutation induced by $x$.  Then we have
that $a \in \Start(x)$ if and only if $\pi(a+1) < \pi(a)$, and $a \in
\Fin(X)$ if and only if $\pi^{-1}(a+1) < \pi^{-1}(a)$ \cite[Proposition~1.5.3]{CombCox}.

\begin{lemma}[\cite{Caruso13}]\label{one-to-many}
  Suppose that $1 \text{ --- } 2 \text{ ---} \cdots \text{--- } (k-1)$, where $k\ge 2$, is
  a subgraph of the Coxeter graph of $M$. Let $u = u(1,2,\ldots,k-1)$ be the
  braid which corresponds to the permutation
  \[ \pi_u = \left(\begin{array}{cccccccc}
      1 & 
      2 & 
      \cdots & 
      \left\lfloor \frac{k}{2} \right\rfloor &
      \left\lfloor \frac{k}{2} \right\rfloor + 1 &
      \left\lfloor \frac{k}{2} \right\rfloor + 2 &
      \cdots & 
      k \\
      2 &
      4 &
      \cdots & 
      2\left\lfloor \frac{k}{2} \right\rfloor &
      1 &
      3 &
      \cdots & 
      2\left\lceil \frac{k}{2} \right\rceil - 1
    \end{array}\right)
    \;.
  \]
  Then one has
  \[ 
  \Start(u) = \left\{\left\lfloor\frac{k}{2}\right\rfloor\right\}
  \qquad
  \Fin(u) = \left\{1,3,\ldots,2\left\lfloor\frac{k}{2}\right\rfloor-1\right\}.
  \]
\end{lemma}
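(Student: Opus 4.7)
The plan is to use the characterisation stated immediately before the lemma: inside the parabolic submonoid of type $\Artin{A}_{k-1}$, a simple element $x$ corresponds to a permutation $\pi$ of $\{1,\ldots,k\}$, and one has $a\in\Start(x)$ iff $\pi(a+1)<\pi(a)$, while $a\in\Fin(x)$ iff $\pi^{-1}(a+1)<\pi^{-1}(a)$. Since $u$ is defined to be the braid corresponding to the given permutation, it is automatically simple; the task reduces to a direct descent calculation on $\pi_u$ and $\pi_u^{-1}$.

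First I would rewrite $\pi_u$ in functional form, namely $\pi_u(i)=2i$ for $1\le i\le \lfloor k/2\rfloor$ and $\pi_u(\lfloor k/2\rfloor+j)=2j-1$ for $1\le j\le \lceil k/2\rceil$. To determine $\Start(u)$ I would scan the consecutive differences $\pi_u(a+1)-\pi_u(a)$ for $a\in\{1,\ldots,k-1\}$ in three bands: (i) for $a<\lfloor k/2\rfloor$ both $a$ and $a+1$ lie in the ``even'' block and $\pi_u(a+1)=\pi_u(a)+2>\pi_u(a)$, so no descent; (ii) for $a=\lfloor k/2\rfloor$ one jumps from the largest even value $2\lfloor k/2\rfloor$ to $1$, producing a descent; (iii) for $a>\lfloor k/2\rfloor$ both indices lie in the ``odd'' block and again $\pi_u(a+1)=\pi_u(a)+2$, so no descent. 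This yields $\Start(u)=\{\lfloor k/2\rfloor\}$.

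Next, to determine $\Fin(u)$ I would write $\pi_u^{-1}$ explicitly: $\pi_u^{-1}(2i)=i$ for $1\le i\le\lfloor k/2\rfloor$ and $\pi_u^{-1}(2j-1)=\lfloor k/2\rfloor+j$ for $1\le j\le\lceil k/2\rceil$. Then I would compare $\pi_u^{-1}(a)$ with $\pi_u^{-1}(a+1)$ according to the parity of $a$. If $a=2j-1$ is odd with $a+1=2j\le 2\lfloor k/2\rfloor$, then $\pi_u^{-1}(a+1)=j<\lfloor k/2\rfloor+j=\pi_u^{-1}(a)$, so $a$ is a descent; if $a=2i$ is even with $a\le k-1$, then $\pi_u^{-1}(a)=i<\lfloor k/2\rfloor+i+1=\pi_u^{-1}(a+1)$ and there is no descent. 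This gives exactly $\Fin(u)=\{1,3,\ldots,2\lfloor k/2\rfloor-1\}$, noting that $2\lfloor k/2\rfloor-1\le k-1$ in both parities so all listed values are legitimate atom labels.

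The only real obstacle is bookkeeping the two parity cases $k$ even and $k$ odd, since the endpoints of the two blocks in $\pi_u$ and the range of valid indices in $\pi_u^{-1}$ shift by one. A single unified presentation using $\lfloor k/2\rfloor$ and $\lceil k/2\rceil$, with one-line sanity checks at the boundary $a=\lfloor k/2\rfloor$ and at the largest admissible odd $a=2\lfloor k/2\rfloor-1$, should handle both cases simultaneously without splitting into subcases.
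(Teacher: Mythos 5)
Your proof is correct and takes the same approach as the paper: both compute the descent sets of $\pi_u$ and $\pi_u^{-1}$ using the characterisation of $\Start$ and $\Fin$ in terms of descents stated immediately before the lemma. Your write-up is simply a more detailed version of the paper's two-sentence argument, with the functional forms of $\pi_u$, $\pi_u^{-1}$ and the boundary checks spelled out explicitly.
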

\begin{proof}
  It is clear that the only atom $a \in
  \Atoms_{\Artin{A}_{k-1}}$ for which $\pi_u(a+1) < \pi_u(a)$ is~$a=\left\lfloor\frac{k}{2}\right\rfloor$, hence $\Start(u) = \left\{\left\lfloor\frac{k}{2}\right\rfloor\right\}$.  Similarly,
  $\pi_u^{-1}(a+1) < \pi_u^{-1}(a)$ if and only if~$a$ is odd, hence
  $\Fin(u)$ consists of all the odd atoms.
\end{proof}

\begin{lemma}[\cite{Caruso13}]\label{rev-u}
  Suppose that $1 \text{ --- } 2 \text{ ---} \cdots \text{--- } (k-1)$, where $k\ge 2$,
  is a subgraph of the Coxeter graph of $M$, let $u$ be the element defined in \autoref{one-to-many},
  and let 
  \[ 
  v = v(1,2,\ldots,k-1) = (\rev u) \cdot D
  \;,
  \]
  where $D = \Join \left\{ a \in \Atoms_M : a \ne \left\lfloor\tfrac{k}{2}\right\rfloor\right\}$
  and $\rev u$ is the simple element obtained by reversing any expression of~$u$ as a product of atoms.
  
  Then $v$ is a simple element and its finishing set
  contains every atom except possibly~$\left\lfloor\frac{k}{2}\right\rfloor$,
  that is, one has $\Fin(v) \supseteq \Atoms_M \setminus
  \left\{\left\lfloor\frac{k}{2}\right\rfloor\right\}$.
\end{lemma}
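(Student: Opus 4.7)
The plan is to obtain $v$ as a simple element by showing that $D$ is a left divisor of the complement $\complement(\rev u)$, from which both the simplicity of~$v$ and the finishing set inclusion will follow cleanly. I would organise the argument in three stages: first analyse $\rev u$, then identify $D$ as the Garside element of a parabolic submonoid, and finally combine the two using \autoref{Artin-start-finish}.

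For the first stage, I would observe that each braid relation $\langle i,j\rangle^{m_{i,j}}=\langle j,i\rangle^{m_{j,i}}$ is mapped to itself when both sides are read backwards, so the word-reversal map extends to an involutive anti-automorphism of~$M$ that fixes every atom. Such a map necessarily sends simples to simples (since simples are precisely the elements admitting an expression avoiding squares of generators) and interchanges the starting and finishing sets. Applied to~$u$, which by \autoref{one-to-many} satisfies $\Start(u)=\{\lfloor k/2\rfloor\}$ and $\Fin(u)=\{1,3,\ldots,2\lfloor k/2\rfloor-1\}$, this yields a simple element $\rev u\in\Simples$ with $\Fin(\rev u)=\{\lfloor k/2\rfloor\}$.

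For the second stage, set $J=\Atoms_M\setminus\{\lfloor k/2\rfloor\}$. The submonoid $M_J$ of~$M$ generated by~$J$ is a parabolic submonoid (spanned by a sub-Coxeter diagram) and is itself an Artin monoid of spherical type whose Garside element is $D=\Join J$. Hence $D\in\Simples_M$, and being balanced in~$M_J$, it satisfies $\Start_M(D)=\Fin_M(D)=J$; the fact that these sets do not pick up the missing atom $\lfloor k/2\rfloor$ is a standard consequence of the parabolic property in \autoref{D:ParabolicSubmonoid}.

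Combining the two, \autoref{Artin-start-finish} gives $\Start(\complement(\rev u))=\Atoms_M\setminus\Fin(\rev u)=J$. Since $D=\Join J$ is the $\prefix$-least common upper bound of the atoms in~$J$, this inclusion yields $D\prefix\complement(\rev u)$, whence $v=(\rev u)\cdot D\prefix(\rev u)\cdot\complement(\rev u)=\Delta$, so $v\in\Simples$. Finally, since $D$ is a right factor of~$v$ and $\Fin(D)=J$, every atom of~$J$ is a right divisor of~$v$, giving $\Fin(v)\supseteq\Atoms_M\setminus\{\lfloor k/2\rfloor\}$ as required. The only point requiring any care is the assertion that $\rev$ restricts to a well-defined involution on simple elements swapping starting and finishing sets; this is the ``main obstacle'', but it is immediate from the palindromic symmetry of the braid relations. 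All remaining steps are direct applications of \autoref{Artin-start-finish} and basic properties of parabolic submonoids.
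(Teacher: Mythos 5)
Your proof is correct and follows essentially the same route as the paper's: deduce $\Fin(\rev u)=\{\lfloor k/2\rfloor\}$ from $\Start(u)$, apply \autoref{Artin-start-finish} to get $\Start(\complement(\rev u))=\Atoms_M\setminus\{\lfloor k/2\rfloor\}$, conclude $D\prefix\complement(\rev u)$ and hence $v\prefix\Delta$, and finally use that $D$ is the (balanced) Garside element of the parabolic submonoid on $J$ to get $\Fin(v)\supseteq J$. The extra care you devote to verifying that word reversal is an anti-automorphism preserving simples and swapping starting/finishing sets is a detail the paper takes for granted, but it is the same argument.
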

\begin{proof}
  As $\Start(u) = \left\{\left\lfloor\frac{k}{2}\right\rfloor\right\}$
  we have that $\Fin(\rev u) =
  \left\{\left\lfloor\frac{k}{2}\right\rfloor\right\}$ hence
  $\Start(\complement\rev u) =
  \Atoms\setminus\left\{\left\lfloor\frac{k}{2}\right\rfloor\right\}$.  Therefore
  $\Join\left(\Atoms\setminus\{\left\lfloor\frac{k}{2}\right\rfloor\}\right)
  \prefix \complement\rev u$ and so $v$ is simple.

  The element $D$ is a Garside element of a parabolic submonoid of~$M$, thus it is balanced, whence one has
  $\Atoms_M \setminus \left\{\left\lfloor\frac{k}{2}\right\rfloor\right\} \subseteq \Fin(D) \subseteq \Fin(v)$.
\end{proof}

\begin{proposition}[\cite{Caruso13}] \label{type-A-ess-trans}
  Suppose that $M$ is the Artin monoid of type $\Artin{A}_{n-1}$, where $n-1\ge 2$.
  \[
    \begin{xy}
      0;<2em,0em>:<0em,2em>::
      (1,0)*+{1}="1";
      (2,0)*+{2}="2";
      (3,0)*+{3}="3";
      (3.75,0)="4";
      (4.6,0)="5";
      (6,0)*+{(n-1)}="6";
      {\ar@{-}     "1";"2"};
      {\ar@{-}     "2";"3"};
      {\ar@{-}     "3";"4"};
      {\ar@{..}    "4";"5"};
      {\ar@{-}     "5";"6"};
    \end{xy}
  \]
  Then $M$ is essentially 5-transitive.
\end{proposition}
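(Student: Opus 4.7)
By \autoref{Artin-ProperSimplesEssential}, every proper simple element is essential, so $\Ess=\pSimples$ and $\calL_\Ess=\calL$.  It therefore suffices to exhibit, for any $x,y\in\pSimples$, a word in~$\calL$ of length at most~$6$ that starts with~$x$ and ends with~$y$.

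My plan is to construct such a word explicitly.  Given $x,y$, pick $a\in\Fin(x)$ (non-empty since $x\neq\id$), pick $b\in\Atoms\setminus\Start(y)$ (non-empty since $y\neq\Delta$), and set $m:=\lfloor n/2\rfloor$.  Since the Coxeter graph of~$\Artin{A}_{n-1}$ is connected, two applications of \autoref{connecting-atoms} yield simple elements $X$ with $\Start(X)=\{a\}$ and $\Fin(X)=\{m\}$, and $Y$ with $\Start(Y)=\Atoms\setminus\{m\}$ and $\Fin(Y)=\Atoms\setminus\{b\}$.  With $u=u(1,\ldots,n-1)$ and $v=v(1,\ldots,n-1)$ from \autoref{one-to-many} and \autoref{rev-u} applied to the full Coxeter path, the candidate word is
\[
  x \,|\, X \,|\, u \,|\, v \,|\, Y \,|\, y,
\]
of length exactly~$6$.

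The five normal-form conditions would then be checked using \autoref{Artin-normal-form}: $x\,|\,X$ follows from $\Start(X)=\{a\}\subseteq\Fin(x)$; $X\,|\,u$ from $\Fin(X)=\{m\}=\Start(u)$; $v\,|\,Y$ from $\Fin(v)\supseteq\Atoms\setminus\{m\}=\Start(Y)$ by \autoref{rev-u}; and $Y\,|\,y$ from $\Fin(Y)=\Atoms\setminus\{b\}\supseteq\Start(y)$.  Each of $X,u,v,Y$ is a proper simple element since $n-1\geq 2$, so the whole word lies in~$\calL$.

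The main obstacle is the remaining condition $u\,|\,v$, equivalently $\Start(v)\subseteq\Fin(u)$.  Since $v=\rev u\cdot D$ with $D=\Join(\Atoms\setminus\{m\})$ a parabolic Garside element, the inclusion $\Start(v)\supseteq\Start(\rev u)=\Fin(u)$ is automatic; the substance is the reverse inclusion, namely that right-multiplication by~$D$ introduces no additional left prefixes of~$v$.  I would verify this by a direct permutation-theoretic computation using the explicit form of~$\pi_u$ given by \autoref{one-to-many}: one checks that the descents of~$\pi_v$ occur exactly at the odd positions of the Coxeter path, which is precisely~$\Fin(u)$.  This is where the specific braid-relation structure of type~$\Artin{A}_{n-1}$ enters in an essential way; small cases ($n=3,4,5$) make the pattern transparent, and the general case follows by the same calculation.
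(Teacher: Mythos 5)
Your plan reproduces the paper's proof exactly: the same five-factor interpolating word $x\,|\,X\,|\,u\,|\,v\,|\,Y\,|\,y$ with the same choices of $X$, $Y$ via \autoref{connecting-atoms} and the same $u$, $v$ from Lemmata~\ref{one-to-many} and~\ref{rev-u}, and you correctly identify $u\,|\,v$ as the only non-trivial condition. The paper's proof simply carries out the permutation computation you sketch, showing that $\rev u$ sends the even- and odd-numbered strings to the lower and upper halves of $\{1,\ldots,n\}$ respectively and that $D$ preserves those two blocks, so that $\Start(v)$ is exactly the set of odd atoms $\{1,3,\ldots,2\lfloor n/2\rfloor-1\}=\Fin(u)$ (your observation that one inclusion $\Start(v)\supseteq\Fin(u)$ is automatic from $v=\rev u\cdot D$ is a nice shortcut, though the paper just computes the full set).
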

\begin{proof}
  By \autoref{Artin-ProperSimplesEssential}, one has $\Ess=\pSimples\neq\emptyset$.
  Suppose $x, y \in \pSimples$.  We will construct elements $x_1, x_2,
  x_3, x_4 \in \pSimples$ that satisfy $x|x_1|x_2|x_3|x_4|y$.

  Suppose that $a$ is an atom in the finishing set of $x$.  By
  \autoref{connecting-atoms}, there exists $x_1$ such that
  $\Start(x_1) = \{ a \}$ and $\Fin(x_1) =
  \left\{\left\lfloor\frac{n}{2}\right\rfloor\right\}$.  For $x_2 = u(1,2,\ldots,n-1)$,
  we have $\Start(x_2) = \left\{\left\lfloor\frac{n}{2}\right\rfloor\right\}$ by \autoref{one-to-many}, and then
  $x|x_1|x_2$ by \autoref{Artin-normal-form}.

  Similarly, suppose that $b$ is an atom not in the starting set of
  $y$.  By \autoref{connecting-atoms}, there exists $x_4$ such that
  $\Fin(x_4) = \Atoms \setminus \{b\}$ and $\Start(x_4) = \Atoms
  \setminus \left\{\left\lfloor\frac{n}{2}\right\rfloor\right\}$.  Let
  $x_3 = v(1,2,\ldots,n-1)$, so by \autoref{rev-u} we have
  $\Fin(x_3)\supseteq\Atoms\setminus\left\{\left\lfloor\frac{n}{2}\right\rfloor\right\}$, whence
  we have $x_3|x_4|y$ by \autoref{Artin-normal-form}.

  It remains to show that $x_2|x_3$, or equivalently that $\Fin(x_2)
  \supseteq \Start(x_3)$.  We have $\Fin(x_2) =
  \left\{1,3,\ldots,2\left\lfloor\frac{n}{2}\right\rfloor-1\right\}$ by \autoref{one-to-many}.

  Consider the permutation induced by $x_3=v$.
  The permutation induced by $\rev u$ takes the set of even numbered strings to
  $\{1,2,\ldots,\lfloor\frac{n}{2}\rfloor\}$ and the set of odd numbered
  strings to $\{\lfloor\frac{n}{2}\rfloor+1,\lfloor\frac{n}{2}\rfloor+2,\ldots,n\}$, and
  $\Join\left\{ a \in \Atoms_M : a \ne \left\lfloor\frac{n}{2}\right\rfloor\right\}$ performs a
  half-twist on both of these subsets, so in particular leaves them invariant.
  Hence, for all $i>0$ we have
  \[ (2i\pm 1)\cdot x_3 > \frac{n}{2} \ge (2i)\cdot x_3\;, \]
  whence $\Start(x_3) = \{ 1, 3, \ldots 2\left\lfloor\frac{n}{2}\right\rfloor - 1\} = \Fin(x_2)$, and so $x_2|x_3$ by \autoref{Artin-normal-form}.
\end{proof}

\begin{remark}
Calculating powers of the adjacency matrices, it can be shown that the acceptor~$\Acceptor_{\Artin{A}_n}$ has a diameter of~5 for $n\in\{4,5,\ldots,11\}$, so the statement of \autoref{type-A-ess-trans} cannot be sharpened in general.
The diameter of~$\Acceptor_{\Artin{A}_2}$ is~2 and the diameter of~$\Acceptor_{\Artin{A}_3}$ is~4.
\end{remark}

\begin{proposition} \label{type-B-ess-trans}
  Suppose that $M$ is the Artin monoid of type $\Artin{B}_n$, where $n\ge 2$.
  \[
    \begin{xy}
      0;<2em,0em>:<0em,2em>::
      (0,0)*+{0}="0";
      (1,0)*+{1}="1";
      (2,0)*+{2}="2";
      (3,0)*+{3}="3";
      (3.75,0)="4";
      (4.6,0)="5";
      (6,0)*+{(n-1)}="6";
      {\ar@{-}^{4}  "0";"1"};
      {\ar@{-}     "1";"2"};
      {\ar@{-}     "2";"3"};
      {\ar@{-}     "3";"4"};
      {\ar@{..}    "4";"5"};
      {\ar@{-}     "5";"6"};
    \end{xy}
  \]
  Then $M$ is essentially 5-transitive.
\end{proposition}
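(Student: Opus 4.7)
The plan is to follow the strategy of \autoref{type-A-ess-trans} verbatim, exploiting the subchain $1 \text{---} 2 \text{---} \cdots \text{---} (n-1)$ of type $\Artin{A}_{n-2}$ sitting inside the Coxeter graph of $\Artin{B}_n$. Applying \autoref{one-to-many} and \autoref{rev-u} with $k=n$ produces simple elements $u=u(1,2,\ldots,n-1)$ and $v=v(1,2,\ldots,n-1)=(\rev u)\cdot D$ of $\Artin{B}_n$, where $D=\Join(\Atoms_M\setminus\{\lfloor n/2\rfloor\})$. By those lemmata, $\Start(u)=\{\lfloor n/2\rfloor\}$, $\Fin(u)=\{1,3,\ldots,2\lfloor n/2\rfloor-1\}$, and $\Fin(v)\supseteq\Atoms_M\setminus\{\lfloor n/2\rfloor\}$.

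Given $x,y\in\pSimples$, the plan is to exhibit a length-$6$ normal form word $x\mid x_1\mid u\mid v\mid x_4\mid y\in\calL_{\Ess}$. The outer elements are produced exactly as in the $A$-case: pick $a\in\Fin(x)$ and, using \autoref{connecting-atoms} (applicable because $\Artin{B}_n$ is irreducible, so its diagram is connected), construct $x_1\in\pSimples$ with $\Start(x_1)=\{a\}$ and $\Fin(x_1)=\{\lfloor n/2\rfloor\}$; symmetrically, pick $b\in\Atoms\setminus\Start(y)$ and construct $x_4\in\pSimples$ with $\Start(x_4)=\Atoms\setminus\{\lfloor n/2\rfloor\}$ and $\Fin(x_4)=\Atoms\setminus\{b\}$. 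Then $x\mid x_1\mid u$ and $v\mid x_4\mid y$ both hold by \autoref{Artin-normal-form}.

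The main obstacle is the central link $u\mid v$, equivalently $\Start(v)\subseteq\Fin(u)$, since the reverse inclusion is automatic from $\rev u\prefix v$ together with $\Start(\rev u)=\Fin(u)$. For this I would pass to the hyperoctahedral realisation of $W_{\Artin{B}_n}$ as signed permutations of $\{\pm 1,\ldots,\pm n\}$, in which $s_i$ for $i\ge 1$ acts as the signed transposition $(i,i+1)(-i,-i-1)$ and $s_0$ acts as $(1,-1)$; in this model, $i\in\Start(w)$ iff $\pi_w(i+1)<\pi_w(i)$ for $i\ge 1$, and $0\in\Start(w)$ iff $\pi_w(1)<0$. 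Since $\rev u$ lies in the $\Artin{A}_{n-2}$-parabolic it acts as an unsigned permutation, while $D$ is the longest element of the reducible parabolic $W_{\Artin{B}_{\lfloor n/2\rfloor}}\times W_{\Artin{A}_{\lceil n/2\rceil-1}}$ (on positions $\{1,\ldots,\lfloor n/2\rfloor\}$ and $\{\lfloor n/2\rfloor+1,\ldots,n\}$), so it negates the first $\lfloor n/2\rfloor$ positions and reverses the remaining $\lceil n/2\rceil$ positions without sign change. A direct computation of $\pi_v$ then shows that the odd positions $1,3,\ldots$ map bijectively onto $n,n-1,\ldots,\lfloor n/2\rfloor+1$ (in that order) while the even positions $2,4,\ldots,2\lfloor n/2\rfloor$ map onto $-1,-2,\ldots,-\lfloor n/2\rfloor$. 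In particular $\pi_v(1)=n>0$ (so $0\notin\Start(v)$) and the signs of $\pi_v(i)$ alternate, giving descents precisely at the odd positions $i\le 2\lfloor n/2\rfloor-1$; hence $\Start(v)=\{1,3,\ldots,2\lfloor n/2\rfloor-1\}=\Fin(u)$, as required.
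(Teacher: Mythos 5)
Your proof is correct and follows essentially the same route as the paper's: it builds the middle of the chain with $u(1,\ldots,n-1)$ and $v=(\rev u)\cdot D$ from \autoref{one-to-many} and \autoref{rev-u}, passes to the signed-permutation realisation, decomposes the action of $D$ as (negate positions $\le\lfloor n/2\rfloor$) $\times$ (reverse positions $>\lfloor n/2\rfloor$), and reads off the alternating sign pattern of $\pi_v$ to conclude $\Start(v)=\{1,3,\ldots,2\lfloor n/2\rfloor-1\}=\Fin(u)$. The only cosmetic difference is your extra remark that $\Fin(u)\subseteq\Start(v)$ holds automatically (the paper only needs and only proves the forward inclusion), plus phrasing the $0$-descent criterion as $\pi_v(1)<0$ rather than via the convention $0\cdot x_3=0$, which are equivalent.
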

\begin{proof}
  Let $x_2 = u(1,2, \ldots, n-1)$ and $x_3 = v(1,2,\ldots,n-1)$.
  As in the proof of \autoref{type-A-ess-trans}, it suffices to show that $\Start(x_3) \subseteq \{ 1, 3, \ldots 2\left\lfloor\frac{n}{2}\right\rfloor - 1\}$ by \autoref{Artin-normal-form}, \autoref{Artin-ProperSimplesEssential}, \autoref{connecting-atoms}, \autoref{one-to-many}, and \autoref{rev-u}.

  Simple elements in $\Artin{B}_n$ correspond to signed permutations of $\{1,\ldots,n\}$.
  More precisely, the atoms $i = 1, 2, \ldots, n-1$ correspond to the signed
  transpositions $(i,i+1)(-i,-(i+1))$ and the atom~$0$ corresponds to the signed transposition $(1,-1)$.
  The starting set of an element can be computed from its induced signed
  permutation \cite[Proposition 8.1.2]{CombCox}: One has
  \[
    \Start(x_3) = \{ i \in \Atoms : i\cdot x_3 > (i+1)\cdot x_3\}
    \;,
  \]
  where we use the convention $0\cdot x_3 = 0$.

  The signed permutation induced by $\rev u$ maps the set $\{1,3,\ldots,2\lceil\frac{n}{2}\rceil-1\}$ to the set $\{\lfloor\frac{n}{2}\rfloor+1,\lfloor\frac{n}{2}\rfloor+2,\ldots,n\}$, and it maps the set
  $\{2,4,\ldots,2\lfloor\frac{n}{2}\rfloor\}$ to the set $\{1,2,\ldots,\lfloor\frac{n}{2}\rfloor\}$.
  The action of $\Join\left\{ a \in \Atoms_M : a \ne \left\lfloor\frac{n}{2}\right\rfloor\right\}$
  performs a
  half twist of the numbers greater than~$\frac{n}{2}$ and changes the
  sign of the numbers $1,2,\ldots,\lfloor\frac{n}{2}\rfloor$.
  Hence, for all $i>0$ we have
  \[ (2i\pm 1)\cdot x_3 > \frac{n}{2} > 0 > (2i)\cdot x_3 \] 
  and thus $\Start(x_3) = \{ 1, 3, \ldots 2\left\lfloor\frac{n}{2}\right\rfloor - 1\}$.
\end{proof}

\begin{remark}
Calculating powers of the adjacency matrices, it can be shown that the acceptor~$\Acceptor_{\Artin{B}_n}$ has a diameter of~5 for $n\in\{5,6,7,8\}$, so the statement of \autoref{type-B-ess-trans} cannot be sharpened in general.
The diameter of~$\Acceptor_{\Artin{B}_2}$ is~2, and the acceptors~$\Acceptor_{\Artin{B}_3}$ and~$\Acceptor_{\Artin{B}_4}$ have a diameter of~4.
\end{remark}

\begin{proposition} \label{type-D-ess-trans}
  Suppose that $M$ is the Artin monoid of type $\Artin{D}_n$, where $n\ge 3$.
  \[
    \begin{xy}
      0;<2em,0em>:<0em,1em>::
      (0,1)*+{0}="0";
      (0,-1)*+{1}="1";
      (1,0)*+{2}="2";
      (2,0)*+{3}="3";
      (3,0)*+{4}="4";
      (3.75,0)="5";
      (4.6,0)="6";
      (6,0)*+{(n-1)}="7";
      {\ar@{-}  "0";"2"};
      {\ar@{-}  "1";"2"};
      {\ar@{-}  "2";"3"};
      {\ar@{-}  "3";"4"};
      {\ar@{-} "4";"5"};
      {\ar@{..}  "5";"6"};
      {\ar@{-}  "6";"7"};
    \end{xy}
  \]
  Then $M$ is essentially 5-transitive.
\end{proposition}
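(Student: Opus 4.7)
The plan is to imitate the proofs of \autoref{type-A-ess-trans} and \autoref{type-B-ess-trans} essentially verbatim. By \autoref{Artin-ProperSimplesEssential} we have $\Ess = \pSimples \ne \emptyset$; given $x, y \in \pSimples$, take the $\Artin{A}_{n-1}$ sub-chain formed by the atoms $1, 2, \ldots, n-1$ and set $x_2 = u(1, 2, \ldots, n-1)$ and $x_3 = v(1, 2, \ldots, n-1)$. By \autoref{one-to-many} and \autoref{rev-u} these satisfy $\Start(x_2) = \{\lfloor n/2 \rfloor\}$, $\Fin(x_2) = \{1, 3, \ldots, 2\lfloor n/2 \rfloor - 1\}$, and $\Fin(x_3) \supseteq \Atoms \setminus \{\lfloor n/2 \rfloor\}$. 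Then \autoref{connecting-atoms} supplies an $x_1$ from some atom $a \in \Fin(x)$ to $\lfloor n/2 \rfloor$ and an $x_4$ from $\Atoms \setminus \{\lfloor n/2 \rfloor\}$ to the complement of $\Start(y)$, giving a word $x|x_1|x_2|x_3|x_4|y$ in $\calL$ as soon as the central link $x_2|x_3$ is verified.

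By \autoref{Artin-normal-form} this last task reduces to showing $\Start(x_3) \subseteq \Fin(x_2) = \{1, 3, \ldots, 2\lfloor n/2 \rfloor - 1\}$. To do so, I would realise $W(\Artin{D}_n)$ as the subgroup of signed permutations of $\{\pm 1, \ldots, \pm n\}$ with an even number of negated entries, in which the atoms $1, 2, \ldots, n-1$ act as in $\Artin{A}_{n-1}$ without affecting signs while the atom $0$ acts as the twisted transposition $(1, -2)(-1, 2)$. The starting set can then be characterised by inequalities analogous to those used in \autoref{type-B-ess-trans}, but with the criterion at index $0$ replaced by a comparison involving $-2 \cdot w$ and $1 \cdot w$. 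As in the $\Artin{B}_n$ proof, $\rev u$ carries the odd positions onto the upper half $\{\lfloor n/2 \rfloor + 1, \ldots, n\}$ and the even positions onto the lower half $\{1, \ldots, \lfloor n/2 \rfloor\}$ without introducing any sign changes. The factor $D = \Join\{a \in \Atoms : a \ne \lfloor n/2 \rfloor\}$ is the Garside element of a parabolic whose Coxeter diagram is the disjoint union of the $\Artin{A}$-type chain $\{\lfloor n/2 \rfloor + 1, \ldots, n-1\}$ and a piece of type $\Artin{D}_{\lfloor n/2 \rfloor}$ on $\{0, 1, 2, \ldots, \lfloor n/2 \rfloor - 1\}$ (degenerating to $\Artin{A}_1 \times \Artin{A}_1$ when $\lfloor n/2 \rfloor = 2$); it therefore half-twists the upper half and, on the lower half, applies an involution with an \emph{even} number of sign changes.

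The main obstacle is precisely the sign-bookkeeping under this even-parity constraint: unlike in $\Artin{B}_n$, where every lower-half entry could simultaneously be made negative, only an even number of sign flips is permitted here, and one must still establish $(2i) \cdot x_3 < (2i \pm 1) \cdot x_3$ for every $i > 0$ together with the analogous condition ruling out $0 \in \Start(x_3)$. I expect the argument to require separating the sub-case $\lfloor n/2 \rfloor \ge 3$, where the lower parabolic is genuinely of type $\Artin{D}_{\lfloor n/2 \rfloor}$, from the degenerate case $\lfloor n/2 \rfloor = 2$, where it decomposes as $\Artin{A}_1 \times \Artin{A}_1$; the very small case $n = 3$ is already covered since $\Artin{D}_3 = \Artin{A}_3$ by \autoref{T:ArtinClassification} and hence by \autoref{type-A-ess-trans}.
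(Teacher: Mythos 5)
Your set-up is exactly the paper's: choose $x_2 = u(1,\ldots,n-1)$ and $x_3 = v(1,\ldots,n-1)$ via the $\Artin{A}_{n-1}$ sub-chain, realise $W(\Artin{D}_n)$ as even-signed permutations with atom $0$ acting as $(1,-2)(2,-1)$, reduce to showing $\Start(x_3) \subseteq \Fin(x_2) = \{1,3,\ldots,2\lfloor n/2\rfloor - 1\}$, and use the descent criterion $\Start(x_3) = \{ i : i\cdot x_3 > (i+1)\cdot x_3 \}$ with $0\cdot x_3 := -2\cdot x_3$. But the proposal stops at precisely the point where the paper's proof does its work: you name the sign-bookkeeping step as ``the main obstacle'', announce that ``one must still establish'' the inequalities $(2i)\cdot x_3 < (2i\pm 1)\cdot x_3$ together with $0 \notin \Start(x_3)$, and then merely speculate that ``I expect the argument to require separating'' the cases $\lfloor n/2\rfloor \ge 3$ and $\lfloor n/2\rfloor = 2$. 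None of this is actually carried out, so the content of the claim $\Start(x_3) = \{1,3,\ldots,2\lfloor n/2\rfloor-1\}$ remains unproved.

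Moreover the anticipated case split is not needed; a single uniform argument suffices, and this is where you missed the key observation. Write $m = \lfloor n/2\rfloor$. After $\rev u$, odd positions land in $\{m+1,\ldots,n\}$ and even positions in $\{1,\ldots,m\}$, with no sign changes. The Garside element $D$ of the parabolic on $\Atoms\setminus\{m\}$ half-twists $\{m+1,\ldots,n\}$, negates $2,\ldots,m$, and sends $1$ to some element of $\{-1,1\}$ --- the precise sign (which depends on the parity of $m$, and in the degenerate $m=2$ case on the $\Artin{A}_1\times\Artin{A}_1$ computation) is completely irrelevant, because all you need is that the image of the lower block under $D$ is bounded above by $1$. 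That is already enough for $(2i\pm 1)\cdot x_3 > n/2 > 1 \ge (2i)\cdot x_3$ for all $i>0$. For the fork atom, $0\cdot x_3 = -2\cdot x_3 \le m \le n/2 < 1\cdot x_3$, so $0\notin\Start(x_3)$. No sub-cases, and the $n=3$ reduction to $\Artin{A}_3$ that you mention, while true, is also not required.
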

\begin{proof}
  Let $x_2 = u(1,2, \ldots, n-1)$ and $x_3 = v(1,2,\ldots,n-1)$.
  As in the proof of \autoref{type-A-ess-trans}, it suffices to show that $\Start(x_3) \subseteq \{ 1, 3, \ldots 2\left\lfloor\frac{n}{2}\right\rfloor - 1\}$ by \autoref{Artin-normal-form}, \autoref{Artin-ProperSimplesEssential}, \autoref{connecting-atoms}, \autoref{one-to-many}, and \autoref{rev-u}.

  Simple elements in $\Artin{D}_n$ correspond to those signed permutations of $\{1,\ldots,n\}$ that
  change the sign of an even number of integers.  As in type
  $\Artin{B}_n$, the atoms $i = 1, \ldots, n-1$ correspond to the signed
  transpositions $(i,i+1)(-i,-(i+1))$, but now the atom~$0$ corresponds to the signed transposition
  $(1,-2)(2,-1)$.
  The starting set can be computed from the induced signed
  permutation \cite[Proposition 8.2.2]{CombCox}: One has
  \[
    \Start(x_3) = \{ i \in \Atoms : i\cdot x_3 > (i+1)\cdot x_3\}
    \;,
  \]
  where we use the convention $0\cdot x_3 = -2\cdot x_3$.

  The signed permutation induced by $\rev u$ maps the set $\{1,3,\ldots,2\lceil\frac{n}{2}\rceil-1\}$ to the set $\{\lfloor\frac{n}{2}\rfloor+1,\lfloor\frac{n}{2}\rfloor+2,\ldots,n\}$, and it maps the set
  $\{2,4,\ldots,2\lfloor\frac{n}{2}\rfloor\}$ to the set $\{1,2,\ldots,\lfloor\frac{n}{2}\rfloor\}$.
  The action of $\Join\left\{ a \in \Atoms_M : a \ne \left\lfloor\frac{n}{2}\right\rfloor\right\}$ performs a
  half twist of the numbers greater than~$\frac{n}{2}$, changes the
  sign of the integers $2,3,\ldots,\lfloor\frac{n}{2}\rfloor$, and the image of~$1$ is contained in $\{-1,1\}$.
  Hence, for all $i>0$ we have
  \[ (2i\pm 1)\cdot x_3 > \frac{n}{2} > 1 \ge (2i)\cdot x_3 \]
  as well as $0\cdot x_3 = -2\cdot x_3 \le \frac{n}{2},$
  whence $\Start(x_3) = \{ 1, 3, \ldots
  2\left\lfloor\frac{n}{2}\right\rfloor - 1\}$.
\end{proof}

\begin{remark}
Calculating powers of the adjacency matrices, it can be shown that the acceptor~$\Acceptor_{\Artin{D}_n}$ has a diameter of~5 for $n\in\{6,7,8\}$, so the statement of \autoref{type-D-ess-trans} cannot be sharpened in general.
The acceptors~$\Acceptor_{\Artin{D}_3}$, $\Acceptor_{\Artin{D}_4}$ and~$\Acceptor_{\Artin{D}_5}$ have a diameter of~4.

Note that the monoid~$\Artin{A}_4$ is a parabolic submonoid of the monoid~$\Artin{D}_5$, yet its acceptor has a larger diameter than that of the monoid~$\Artin{D}_5$.
\end{remark}

\begin{proposition} \label{type-E6-ess-trans}
  Suppose that $M$ is the Artin monoid of type $\Artin{E}_6$.
  \[
    \begin{xy}
      0;<2em,0em>:<0em,2em>::
      (0,0)*+{1}="1";
      (1,0)*+{2}="2";
      (2,0)*+{3}="3";
      (3,0)*+{4}="4";
      (4,0)*+{5}="5";
      (2,1)*+{0}="0";
      {\ar@{-}  "1";"2"};
      {\ar@{-}  "2";"3"};
      {\ar@{-}  "3";"4"};
      {\ar@{-}  "4";"5"};
      {\ar@{-}  "3";"0"};
    \end{xy}
  \]
  Then $M$ is essentially 4-transitive.
\end{proposition}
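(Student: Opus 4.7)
The plan is to mimic the structure of \autoref{type-A-ess-trans} while aiming to construct, for any $x, y \in \pSimples$, a word $x | x_1 | x_2 | x_3 | y$ in normal form with only three intermediate proper simples rather than the four used in that proof. By \autoref{Artin-ProperSimplesEssential} we have $\Ess = \pSimples$. Picking $a \in \Fin(x)$ and $b \notin \Start(y)$ as before, the plan is to build the outer elements $x_1$ and $x_3$ via \autoref{connecting-atoms}, which reduces the problem to identifying an appropriate middle element~$x_2$ that combines the roles of the two middle elements $u(1,\ldots,n-1)$ and $v(1,\ldots,n-1)$ used in the type-$\Artin{A}$ proof.

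The natural candidates coming from \autoref{one-to-many} and \autoref{rev-u} applied to the embedded $\Artin{A}_5$ subpath $1 \text{ --- } 2 \text{ --- } 3 \text{ --- } 4 \text{ --- } 5$ fall short individually: with the branching vertex~$3$ as the ``central'' atom, $u(1,2,3,4,5)$ satisfies $\Start = \{3\}$ but only $\Fin = \{1,3,5\}$, while $v(1,2,3,4,5)$ has $\Fin \supseteq \Atoms \setminus \{3\}$ but $\Start = \{1,3,5\}$. The plan is to exploit the extra atom~$0$, attached to~$3$, to construct a single middle element that has both a small starting set (a singleton contained in $\Fin(x_1)$) and a finishing set containing $\Atoms \setminus \{c\}$ for some atom~$c$—or, failing that, to choose the middle element and the outer bridges in a case-by-case manner depending on which atoms of $\Atoms \setminus \{3\}$ lie in $\Fin(x)$ and $\Start(y)$, taking advantage of the $\ZZ/2$ diagram symmetry swapping $\{1,5\}$ and $\{2,4\}$ while fixing $\{0,3\}$.

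The main obstacle is the explicit construction of this middle element (or family of middle elements) and the verification that the resulting product is a proper simple element with the claimed starting and finishing sets. This would proceed either by a delicate combinatorial argument involving the action on the reflection representation of the Weyl group of type $\Artin{E}_6$—in the spirit of the signed-permutation computations used in \autoref{type-B-ess-trans} and \autoref{type-D-ess-trans}—or by direct computation in this group, which has only $51\,840$ elements and is well within the reach of the computing resources acknowledged in the paper; indeed, computing the diameter of $\Acceptor_{\Artin{E}_6}$ directly would itself establish the proposition.
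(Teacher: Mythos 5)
Your reduction is exactly the paper's: use \autoref{Artin-ProperSimplesEssential} to get $\Ess=\pSimples$, then \autoref{connecting-atoms} and \autoref{Artin-normal-form} to produce the two outer bridges from $a\in\Fin(x)$ and $b\notin\Start(y)$, which reduces the claim to producing a single proper simple element $z$ with $\Start(z)=\{c\}$ and $\Fin(z)\supseteq\Atoms\setminus\{c\}$ for some fixed atom~$c$; the paper fixes $c=3$. You also correctly observe that the elements $u(1,\ldots,5)$ and $v(1,\ldots,5)$ of Lemmas~\ref{one-to-many} and~\ref{rev-u}, applied to the embedded $\Artin{A}_5$ path, are individually insufficient.

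The gap is that you never actually exhibit such a~$z$, nor verify that one exists: you name this as the remaining obstacle and propose resolving it by exploiting the atom~$0$, by a computation in the Weyl group, or by a case-by-case analysis using the diagram symmetry. The paper completes exactly this step by writing down the explicit word $z=302134302154$ and asserting that a direct check in the finite Coxeter group $W(\Artin{E}_6)$ (order $51{,}840$, as you note) shows $z$ to be simple with $\Start(z)=\{3\}$ and $\Fin(z)=\Atoms\setminus\{3\}$. Without producing that element --- or carrying out the alternative you mention of computing the diameter of $\Acceptor_{\Artin{E}_6}$ outright --- what you have is a correct plan rather than a proof. The case-by-case fallback you hedge with is in fact unnecessary: a single middle element with the stated starting and finishing sets does exist and suffices.
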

\begin{proof}
  \autoref{Artin-ProperSimplesEssential} yields $\Ess=\pSimples\neq\emptyset$.
  By \autoref{connecting-atoms} and \autoref{Artin-normal-form}, it suffices to construct an element
  $x_3 \in \pSimples$ with $\Start(x_3) = \{3\}$ and $\Fin(x_3) =
  \Atoms \setminus \{3\}$.

  By a direct computation in the Coxeter group of type $\Artin{E}_6$, one readily verifies that the element
  $x_3 = 302134302154$ is simple and has the required starting and finishing sets.
\end{proof}

\begin{remark}
Calculating powers of the adjacency matrix, it can be shown that the diameter of~$\Acceptor_{\Artin{E}_6}$ is~4, so the statement of \autoref{type-E6-ess-trans} cannot be sharpened.
\end{remark}

\begin{proposition} \label{type-E7-ess-trans}
  Suppose that $M$ is the Artin monoid of type $\Artin{E}_7$.
  \[
    \begin{xy}
      0;<2em,0em>:<0em,2em>::
      (0,0)*+{1}="1";
      (1,0)*+{2}="2";
      (2,0)*+{3}="3";
      (3,0)*+{4}="4";
      (4,0)*+{5}="5";
      (5,0)*+{6}="6";
      (2,1)*+{0}="0";
      {\ar@{-} "1";"2"};
      {\ar@{-} "2";"3"};
      {\ar@{-} "3";"4"};
      {\ar@{-} "4";"5"};
      {\ar@{-} "5";"6"};
      {\ar@{-} "3";"0"};
    \end{xy}
  \]
  Then $M$ is essentially 4-transitive.
\end{proposition}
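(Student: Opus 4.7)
The plan is to follow the pattern set by \autoref{type-E6-ess-trans}: reduce to the construction of a single simple element with prescribed starting and finishing sets.  By \autoref{Artin-ProperSimplesEssential}, one has $\Ess=\pSimples\neq\emptyset$.  So suppose $x,y\in\pSimples$; it is enough to produce a simple element $x_3\in\pSimples$ satisfying $\Start(x_3)=\{3\}$ and $\Fin(x_3)=\Atoms\setminus\{3\}$.  Indeed, choosing any $a\in\Fin(x)$ and any $b\in\Atoms\setminus\Start(y)$, \autoref{connecting-atoms} (applied in the connected graph of $\Artin{E}_7$) would yield simple elements $x_1$ with $\Start(x_1)=\{a\}$ and $\Fin(x_1)=\{3\}$, and $x_4$ with $\Start(x_4)=\Atoms\setminus\{3\}$ and $\Fin(x_4)=\Atoms\setminus\{b\}$.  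Then \autoref{Artin-normal-form} gives that $x\,|\,x_1\,|\,x_3\,|\,x_4\,|\,y$ is a word of length~$5$ in $\calL_\Ess$ running from $x$ to $y$, witnessing essential $4$-transitivity.

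The remaining task is to exhibit an explicit $x_3 \in \Simples_{\Artin{E}_7}$ with the required starting and finishing sets.  The natural strategy is to adapt the $\Artin{E}_6$ word $302134302154$ from the proof of \autoref{type-E6-ess-trans} by appending further factors that incorporate the additional atom~$6$, producing a candidate such as $302134302154302165$ or a slightly longer variant.  The extension must be chosen so that the resulting word is still reduced as an element of the Coxeter group $W_{\Artin{E}_7}$ (so that, by the description of simple elements in \autoref{S:BackgroundArtin}, it represents a simple element of $M$), so that atom~$3$ remains the unique atom dividing~$x_3$ on the left, and so that the finishing set picks up the atom~$6$ while still containing every atom other than~$3$.

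The main obstacle is precisely this combinatorial verification.  Once a candidate word is in hand, checking reducedness and computing $\Start$ and $\Fin$ of the induced simple element amount to a finite calculation in the standard reflection representation of $W_{\Artin{E}_7}$; given the size of~$W_{\Artin{E}_7}$, this is most conveniently carried out with computer assistance of the kind mentioned in the acknowledgements.  Once the explicit element is verified, the argument closes exactly as in the proof of \autoref{type-E6-ess-trans}.
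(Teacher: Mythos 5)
Your reduction is exactly the one the paper uses: by \autoref{Artin-ProperSimplesEssential}, \autoref{connecting-atoms} and \autoref{Artin-normal-form}, essential $4$-transitivity follows once one exhibits a single simple element $x_3$ with $\Start(x_3)=\{3\}$ and $\Fin(x_3)=\Atoms\setminus\{3\}$, and your chain $x\,|\,x_1\,|\,x_3\,|\,x_4\,|\,y$ is set up correctly. So the framing is fine.

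The genuine gap is that you never actually produce such an $x_3$. You write ``a candidate such as $302134302154302165$ or a slightly longer variant'' and then explicitly defer the entire verification (``the main obstacle is precisely this combinatorial verification\dots most conveniently carried out with computer assistance''). That verification \emph{is} the content of the proposition: without a word that has been checked to be reduced in $W_{\Artin{E}_7}$ with the prescribed left and right descent sets, nothing has been proved. Note moreover that the paper's witness, $x_3=302134302134543021654$, is not obtained by simply appending letters to the $\Artin{E}_6$ word $302134302154$ — it is a different word (the prefixes already diverge at position $11$) — so your heuristic of ``adapt the $\Artin{E}_6$ word by appending further factors'' is not just unverified but is pointing in a somewhat different direction than the construction that actually works. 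To close the gap you would need to state a specific word and confirm, by computation in the reflection representation of $W_{\Artin{E}_7}$, that it is reduced with $\Start=\{3\}$ and $\Fin=\Atoms\setminus\{3\}$.
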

\begin{proof}
  \autoref{Artin-ProperSimplesEssential} yields $\Ess=\pSimples\neq\emptyset$.
  By \autoref{connecting-atoms} and \autoref{Artin-normal-form}, it suffices to construct an element
  $x_3 \in \pSimples$ with $\Start(x_3) = \{3\}$ and $\Fin(x_3) =
  \Atoms \setminus \{3\}$.

  By a direct computation in the Coxeter group of type $\Artin{E}_7$, one readily verifies that the element
  $x_3 = 302134302134543021654$ is simple and has the required starting and finishing sets.
\end{proof}

\begin{remark}
Calculating powers of the adjacency matrix, it can be shown that the diameter of~$\Acceptor_{\Artin{E}_7}$ is~4, so the statement of \autoref{type-E7-ess-trans} cannot be sharpened.
\end{remark}

\begin{proposition} \label{type-E8-ess-trans}
  Suppose that $M$ is the Artin monoid of type $\Artin{E}_8$.
  \[
    \begin{xy}
      0;<2em,0em>:<0em,2em>::
      (0,0)*+{1}="1";
      (1,0)*+{2}="2";
      (2,0)*+{3}="3";
      (3,0)*+{4}="4";
      (4,0)*+{5}="5";
      (5,0)*+{6}="6";
      (6,0)*+{7}="7";
      (2,1)*+{0}="0";
      {\ar@{-} "1";"2"};
      {\ar@{-} "2";"3"};
      {\ar@{-} "3";"4"};
      {\ar@{-} "4";"5"};
      {\ar@{-} "5";"6"};
      {\ar@{-} "6";"7"};
      {\ar@{-} "3";"0"};
    \end{xy}
  \]
  Then $M$ is essentially 4-transitive.
\end{proposition}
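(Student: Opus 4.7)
The plan is to mirror the arguments given for $\Artin{E}_6$ and $\Artin{E}_7$ in Propositions~\ref{type-E6-ess-trans} and~\ref{type-E7-ess-trans}. By \autoref{Artin-ProperSimplesEssential}, every proper simple element of~$M$ is essential, so $\Ess = \pSimples \neq \emptyset$. For any pair $x, y \in \pSimples$ I want to construct a chain $x | x_1 | x_3 | x_4 | y$ of length~$4$ in normal form, where $x_1$ and $x_4$ are supplied by \autoref{connecting-atoms} applied to the connected Coxeter graph of~$\Artin{E}_8$, and $x_3$ is a fixed simple element whose starting and finishing sets match up with those of $x_1$ on the left and $x_4$ on the right.

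More precisely, I pick atoms $a \in \Fin(x)$ and $b \notin \Start(y)$; since the Coxeter graph of~$\Artin{E}_8$ is connected, \autoref{connecting-atoms} produces a simple element $x_1$ with $\Start(x_1)=\{a\}$ and $\Fin(x_1)=\{3\}$, and a simple element $x_4$ with $\Start(x_4)=\Atoms\setminus\{3\}$ and $\Fin(x_4)=\Atoms\setminus\{b\}$, where $3$ is the branch vertex of the Coxeter diagram of~$\Artin{E}_8$. The heart of the proof is then to exhibit a single simple element $x_3\in\pSimples$ satisfying
\[
  \Start(x_3) = \{3\} \qquad\text{and}\qquad \Fin(x_3) = \Atoms\setminus\{3\}.
\]
Granting such an $x_3$, \autoref{Artin-normal-form} gives $x|x_1$, $x_1|x_3$, $x_3|x_4$ and $x_4|y$, which is the required normal form chain of length~$4$.

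The one non-formal step is the construction of $x_3$ itself. Following the pattern of the $\Artin{E}_6$ word $302134302154$ and the $\Artin{E}_7$ word $302134302134543021654$, I would produce an explicit word in the atoms $0,1,\ldots,7$ of~$\Artin{E}_8$, verify that no subword can be reduced by a braid relation (so that the word is reduced in the Coxeter sense and hence represents a simple element), and then read off its starting and finishing sets by inspection of the signed-permutation action of the associated element of $W(\Artin{E}_8)$ on the root system. The expected obstacle is not conceptual but combinatorial: because $|W(\Artin{E}_8)|=696729600$, producing a concrete reduced expression whose left descent set is exactly $\{3\}$ and whose right descent set is exactly $\Atoms\setminus\{3\}$ is naturally done by computer search in the Coxeter group, analogous to the computations mentioned in the acknowledgements and to the remarks following Propositions~\ref{type-E6-ess-trans} and~\ref{type-E7-ess-trans}. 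Once such a word has been exhibited, checking its properties is a direct calculation in $W(\Artin{E}_8)$, after which the rest of the argument is exactly as above.
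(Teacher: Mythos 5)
Your reduction matches the paper's exactly: invoke \autoref{Artin-ProperSimplesEssential} for $\Ess = \pSimples$, then use \autoref{connecting-atoms} on each end of the chain and \autoref{Artin-normal-form} to collapse the problem to producing a single simple element $x_3$ with $\Start(x_3)=\{3\}$ and $\Fin(x_3)=\Atoms\setminus\{3\}$, after which $x|x_1|x_3|x_4|y$ is a normal-form word of length $5$, giving essential $4$-transitivity.

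The one thing you do not supply is the witness itself. The paper's proof consists almost entirely of exhibiting the explicit word $x_3 = 30213430213454302134565430217654$ and asserting (as a finite check in $W(\Artin{E}_8)$) that it is reduced with left descent set $\{3\}$ and right descent set $\Atoms\setminus\{3\}$. You correctly diagnose that the remaining step is a finite combinatorial search rather than a conceptual argument, but without an actual reduced expression (or some independent existence argument for a Coxeter element with the prescribed one-sided descent sets) the proof is not complete; the search still has to be carried out. A small side note: $W(\Artin{E}_8)$ is not a signed-permutation group, so "read off its starting and finishing sets by inspection of the signed-permutation action" should instead be phrased in terms of the action on the root system (an atom $i$ lies in $\Start(x_3)$ iff the simple root $\alpha_i$ is sent to a negative root), but this does not affect the argument.
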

\begin{proof}
  \autoref{Artin-ProperSimplesEssential} yields $\Ess=\pSimples\neq\emptyset$.
  By \autoref{connecting-atoms} and \autoref{Artin-normal-form}, it suffices to construct an element
  $x_3 \in \pSimples$ with $\Start(x_3) = \{3\}$ and $\Fin(x_3) =
  \Atoms \setminus \{3\}$.

  By a direct computation in the Coxeter group of type $\Artin{E}_8$, one readily verifies that the element
  $x_3 = 30213430213454302134565430217654$ is simple and has the required starting and finishing sets.
\end{proof}

\begin{remark}
Calculating powers of the adjacency matrix, it can be shown that the diameter of~$\Acceptor_{\Artin{E}_8}$ is~4, so the statement of \autoref{type-E8-ess-trans} cannot be sharpened.
\end{remark}

\begin{proposition} \label{type-H3-ess-trans}
  Suppose that $M$ is the Artin monoid of type $\Artin{H}_3$.
  \[
    \begin{xy}
      0;<2em,0em>:<0em,2em>::
      (0,0)*+{1}="1";
      (1,0)*+{2}="2";
      (2,0)*+{3}="3";
      {\ar@{-}^{5}  "1";"2"};
      {\ar@{-}     "2";"3"};
    \end{xy}
  \]
  Then $M$ is essentially 3-transitive.
\end{proposition}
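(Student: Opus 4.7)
The plan is to follow the template used for the exceptional types $\Artin{E}_6$, $\Artin{E}_7$, $\Artin{E}_8$ (\autoref{type-E6-ess-trans}--\autoref{type-E8-ess-trans}), but to produce a word of length at most~$4$ rather than~$5$, corresponding to essential $3$-transitivity rather than $4$-transitivity. By \autoref{Artin-ProperSimplesEssential} one has $\Ess = \pSimples \neq \emptyset$. Given $x, y \in \pSimples$, the aim is to construct $x_1, x_2 \in \pSimples$ with $x \mid x_1 \mid x_2 \mid y$, which by \autoref{Artin-normal-form} reduces to the three inclusions $\Start(x_1) \subseteq \Fin(x)$, $\Start(x_2) \subseteq \Fin(x_1)$ and $\Start(y) \subseteq \Fin(x_2)$.

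Pick any atom $a \in \Fin(x)$ and any atom $b \in \Atoms \setminus \Start(y)$; both sets are nonempty because $x \neq \id$ and $y \neq \Delta$. Writing $2$ for the central vertex of the $\Artin{H}_3$ Coxeter graph, apply \autoref{connecting-atoms} to the pair $(a, 2)$ to obtain $x_1 \in \pSimples$ with $\Start(x_1) = \{a\}$ and $\Fin(x_1) = \{2\}$; this yields $x \mid x_1$ immediately. For the second bridge, I would exhibit, for each $b \in \{1, 2, 3\}$, an explicit proper simple element $x_2^{(b)}$ with $\Start(x_2^{(b)}) = \{2\}$ and $\Fin(x_2^{(b)}) = \Atoms \setminus \{b\}$; then $\Fin(x_1) = \{2\} = \Start(x_2^{(b)})$ gives $x_1 \mid x_2^{(b)}$ and $\Fin(x_2^{(b)}) \supseteq \Start(y)$ gives $x_2^{(b)} \mid y$. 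The case $b = 2$ is handled by the length-$3$ element $x_2^{(2)} = 213 = 231$, whose two reduced expressions (equated via the commutation $13 = 31$) immediately yield $\Fin(x_2^{(2)}) = \{1, 3\}$; a short length-comparison check confirms $\Start(x_2^{(2)}) = \{2\}$.

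The main obstacle is the identification and verification of the bridge elements $x_2^{(1)}$ and $x_2^{(3)}$. Because the $\Artin{H}_3$ Coxeter graph is asymmetric (the $1$--$2$ edge carries label $5$ while the $2$--$3$ edge carries label $3$), these two cases cannot be unified by a graph automorphism and must be handled independently. Plausible candidates of length $5$, such as $x_2^{(1)} = 21232 = 23123$ (using $232 = 323$) and an analogously constructed $x_2^{(3)}$, are verified by computing $\ell(c \cdot x_2^{(b)})$ and $\ell(x_2^{(b)} \cdot c)$ for each atom $c$ directly in the finite Coxeter group $W(\Artin{H}_3)$ of order~$120$, using the defining relations $12121 = 21212$, $232 = 323$ and $13 = 31$ together with $s^2 = \id$ in the Coxeter group (for instance, collapses such as $131 = 3$ are crucial for establishing the desired right descents). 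Once all three bridge elements are in hand, the word $x \ldot x_1 \ldot x_2^{(b)} \ldot y$ lies in $\calL_\Ess$ and has length $4 \leq 3 + 1$, proving essential $3$-transitivity.
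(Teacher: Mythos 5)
Your plan is the exact dual of the paper's: where the paper constructs, for each atom $a$, a bridge $x_a$ with $\Start(x_a)=\{a\}$ and $\Fin(x_a)=\Atoms\setminus\{2\}$ (then uses \autoref{connecting-atoms} for the \emph{second} link), you use \autoref{connecting-atoms} for the \emph{first} link and construct bridges $x_2^{(b)}$ with $\Start=\{2\}$, $\Fin=\Atoms\setminus\{b\}$. Since $w_0$ is central in $W(\Artin{H}_3)$, one has $\Start(\complement s)=\Atoms\setminus\Fin(s)$ and $\Fin(\complement s)=\Atoms\setminus\Start(s)$, so the two families are mapped onto one another by $\complement$ and the strategy is sound. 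Your $x_2^{(2)}=213$ coincides with the paper's $x_2$, and one can check that $21232$ really does have $\Start=\{2\}$, $\Fin=\{2,3\}$, so $x_2^{(1)}=21232$ works.

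The gap is the case $b=3$. You assert ``an analogously constructed $x_2^{(3)}$'' of length $5$, but no such element exists, and the naive $1\leftrightarrow 3$ swap of $21232$, namely $23212$, fails: using $323=232$ one finds $s_3\cdot 23212=32 \cdot 312=2321 \cdot 2 = 2312$ (length $4$), so $3\in\Start(23212)$ and in fact $\Start(23212)=\{2,3\}$. The asymmetry is exactly the one you flagged, and it bites. Worse, a length budget of $5$ can never suffice here: if $\{1,2\}\subseteq\Fin(w)$ then $w$ admits the dihedral longest element $12121$ as a suffix with lengths adding, forcing $\ell(w)\ge 5$, and the only length-$5$ element with this property is $12121$ itself, whose starting set is $\{1,2\}$, not $\{2\}$. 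Checking the remaining short cosets $w'\cdot 12121$ with $w'\in W^{\{1,2\}}$ of lengths $1,2,3$ eliminates lengths $6$, $7$ and $8$ as well; the shortest element with $\Start=\{2\}$ and $\Fin=\{1,2\}$ is $\complement(321213)=2123\cdot 12121 = 212132121$, of length $9$. So to complete your proof you must actually exhibit and verify such an element; the paper avoids the long bridge by putting the ``fixed'' finishing set $\Atoms\setminus\{2\}$ on the first bridge instead, where the needed elements ($1213$, $213$, $321213$) are short.
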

\begin{proof}
  \autoref{Artin-ProperSimplesEssential} yields $\Ess=\pSimples\neq\emptyset$.
  By \autoref{connecting-atoms} and \autoref{Artin-normal-form}, it suffices to construct for every atom $a\in\Atoms$ an element
  $x_a \in \pSimples$ such that $\Start(x_a) = \{a\}$ and $\Fin(x_a) =
  \Atoms \setminus \{2\}$.

  By a direct computation in the Coxeter group of type $\Artin{H}_3$, one readily verifies that the elements
  $x_1 = 1213$, $x_2 = 213$ and $x_3 = 321213$ are simple and have the required starting and finishing sets.
\end{proof}

\begin{remark}
Calculating powers of the adjacency matrix, it can be shown that the diameter of $\Acceptor_{\Artin{H}_3}$ is~3, so the statement of \autoref{type-H3-ess-trans} cannot be sharpened.
\end{remark}

\begin{proposition} \label{type-H4-ess-trans}
  Suppose that $M$ is the Artin monoid of type $\Artin{H}_4$.
  \[
    \begin{xy}
      0;<2em,0em>:<0em,2em>::
      (0,0)*+{1}="1";
      (1,0)*+{2}="2";
      (2,0)*+{3}="3";
      (3,0)*+{4}="4";
      {\ar@{-}^{5}  "1";"2"};
      {\ar@{-}     "2";"3"};
      {\ar@{-}     "3";"4"};
    \end{xy}
  \]
  Then $M$ is essentially 3-transitive.
\end{proposition}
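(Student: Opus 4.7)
The plan is to mirror the argument used for $\Artin{H}_3$ in \autoref{type-H3-ess-trans}, adapted to the four-atom graph of $\Artin{H}_4$. By \autoref{Artin-ProperSimplesEssential}, we have $\Ess=\pSimples\neq\emptyset$, so there is something to prove. The goal is to show that, for any $x,y\in\pSimples$, there exists a word of length at most~$4$ in~$\calL$ beginning with~$x$ and ending with~$y$.

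To achieve this, I would reduce the claim, as in the $\Artin{H}_3$ case, to the existence of one distinguished atom $c\in\Atoms$ together with, for every atom $a\in\Atoms$, a simple element $x_a\in\pSimples$ satisfying $\Start(x_a)=\{a\}$ and $\Fin(x_a)=\Atoms\setminus\{c\}$. Given these $x_a$ and given $x,y\in\pSimples$, I would pick $a\in\Fin(x)$ (which is non-empty since $x\neq\id$) and $b\in\Atoms\setminus\Start(y)$ (which is non-empty since $y\neq\Delta$). Applying \autoref{connecting-atoms} to the atoms $c$ and $b$ (they lie in the same connected component, as the Coxeter graph of~$\Artin{H}_4$ is connected) yields a simple element $x'$ with $\Start(x')=\Atoms\setminus\{c\}$ and $\Fin(x')=\Atoms\setminus\{b\}$. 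Then $\Fin(x)\supseteq\{a\}=\Start(x_a)$, $\Fin(x_a)=\Atoms\setminus\{c\}=\Start(x')$, and $\Fin(x')=\Atoms\setminus\{b\}\supseteq\Start(y)$, so by \autoref{Artin-normal-form} the word $x\ldot x_a\ldot x'\ldot y$ is in normal form. This has length $4$, giving essential $3$-transitivity.

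It remains to exhibit the four elements $x_1,x_2,x_3,x_4$. Guided by the $\Artin{H}_3$ construction, I would try $c=2$ and seek explicit expressions as products of atoms. Specifically, I would look for suitable lifts modelled on those used for $\Artin{H}_3$ (namely $1213$, $213$ and $321213$), adjoining appropriate factors involving the additional atom~$4$ to arrange that the finishing set is~$\{1,3,4\}$ while preserving the required starting atom. Given any candidate word, verification is a direct computation in the Coxeter group~$W_{\Artin{H}_4}$: one checks that the word is reduced (so that it represents a simple element of the Artin monoid) and computes its starting and finishing sets from the associated Coxeter element.

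The main obstacle is purely computational: $W_{\Artin{H}_4}$ has order~$14400$, and there is no obvious closed-form analogue of the type-$\Artin{A}$ permutation construction in Lemmata~\ref{one-to-many} and~\ref{rev-u}. Thus the candidate reduced expressions have to be produced (and verified) by machine, in the same spirit as the explicit words for $\Artin{E}_6$, $\Artin{E}_7$ and $\Artin{E}_8$ in \autoref{type-E6-ess-trans}--\autoref{type-E8-ess-trans}; this is presumably where the $64$-node cluster acknowledged at the end of the introduction was used. No conceptual new ingredient is required beyond the reduction scheme above.
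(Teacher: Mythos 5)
Your reduction scheme (reduce to finding a distinguished atom $c=2$ and, for each atom $a$, a simple $x_a$ with $\Start(x_a)=\{a\}$ and $\Fin(x_a)=\Atoms\setminus\{2\}$, then glue $x\ldot x_a\ldot x'\ldot y$ using \autoref{connecting-atoms} and \autoref{Artin-normal-form}) is exactly the reduction the paper uses, and you correctly identify that the remaining step is a finite machine check in $W_{\Artin{H}_4}$. The paper simply records the explicit words $x_1=121232143$, $x_2=21232143$, $x_3=32121343$, $x_4=4321213212343212134$; otherwise your argument coincides with theirs.
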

\begin{proof}
  \autoref{Artin-ProperSimplesEssential} yields $\Ess=\pSimples\neq\emptyset$.
  By \autoref{connecting-atoms} and \autoref{Artin-normal-form}, it suffices to construct for every atom $a\in\Atoms$ an element
  $x_a \in \pSimples$ such that $\Start(x_a) = \{a\}$ and $\Fin(x_a) =
  \Atoms \setminus \{2\}$.

  By a direct computation in the Coxeter group of type $\Artin{H}_4$, one readily verifies that the elements
  $x_1 = 121232143$, $x_2 = 21232143$, $x_3 = 32121343$ and $x_4 = 4321213212343212134$
  are simple and have the required starting and finishing sets.
\end{proof}

\begin{remark}
Calculating powers of the adjacency matrix, it can be shown that the diameter of $\Acceptor_{\Artin{H}_4}$ is~3, so the statement of \autoref{type-H4-ess-trans} cannot be sharpened.
\end{remark}

\begin{proposition} \label{type-F4-ess-trans}
  Suppose that $M$ is the Artin monoid of type $\Artin{F}_4$.
  \[
    \begin{xy}
      0;<2em,0em>:<0em,2em>::
      (0,0)*+{1}="1";
      (1,0)*+{2}="2";
      (2,0)*+{3}="3";
      (3,0)*+{4}="4";
      {\ar@{-}     "1";"2"};
      {\ar@{-}^{4}  "2";"3"};
      {\ar@{-}     "3";"4"};
    \end{xy}
  \]
  Then $M$ is essentially 3-transitive.
\end{proposition}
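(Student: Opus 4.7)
The plan is to follow exactly the template established for the types $\Artin{H}_3$ and $\Artin{H}_4$ in \autoref{type-H3-ess-trans} and \autoref{type-H4-ess-trans}. By \autoref{Artin-ProperSimplesEssential} one has $\Ess = \pSimples \ne \emptyset$. It then suffices, by \autoref{Artin-normal-form} and \autoref{connecting-atoms}, to produce for every atom $a \in \Atoms = \{1,2,3,4\}$ a simple element $x_a \in \pSimples$ with $\Start(x_a) = \{a\}$ and a common finishing set $\Fin(x_a) = \Atoms \setminus \{c\}$ for some fixed atom~$c$. Indeed, given any $x, y \in \pSimples$, pick $a \in \Fin(x)$ and $b \notin \Start(y)$ (both non-empty as $x$ and $y$ are proper), use \autoref{connecting-atoms} to obtain a simple element $w$ with $\Start(w) = \Atoms \setminus \{c\}$ and $\Fin(w) = \Atoms \setminus \{b\}$, and observe by \autoref{Artin-normal-form} that $x \mid x_a \mid w \mid y$ is a word of length $4 = k+1$ in $\calL_\Ess$ connecting $x$ to $y$.

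The symmetry of the $\Artin{F}_4$ Coxeter graph (which exchanges $1 \leftrightarrow 4$ and $2 \leftrightarrow 3$) suggests taking $c = 2$ (or, equivalently, $c = 3$), so that all four $x_a$ have finishing set $\{1,3,4\}$. One then needs to exhibit the four explicit elements $x_1, x_2, x_3, x_4$. The verification that each proposed word on the generators represents a simple element with the prescribed starting and finishing sets is a direct finite computation in the Coxeter group~$W_{\Artin{F}_4}$, which has order 1152; following the style of the preceding propositions, these elements are simply displayed and the reader is referred to a direct check via the permutation action on the root system of type $\Artin{F}_4$ (or equivalently via computation of $\Start$ and $\Fin$ from any reduced expression).

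The main obstacle is purely computational, namely finding four explicit reduced expressions with the required starting and finishing sets; there is no further structural issue beyond what is already handled in the $\Artin{H}_3$ and $\Artin{H}_4$ cases. Since $\Artin{F}_4$ is small, a short search over words of length at most~$24$ (the length of the Garside element of~$\Artin{F}_4$) suffices, and the resulting expressions are concise enough to be displayed inline as in \autoref{type-H4-ess-trans}. A remark will record that the acceptor of~$\calL_{\Artin{F}_4}$ has diameter~$3$, so that the bound in the proposition is sharp; this is verified by computing powers of the adjacency matrix of $\Acceptor_{\Artin{F}_4}$ as in the corresponding remarks for the other exceptional types.
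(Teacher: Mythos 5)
Your plan reduces to finding, for each atom $a$, a simple element $x_a$ with $\Start(x_a)=\{a\}$ and $\Fin(x_a)=\Atoms\setminus\{c\}$ for a fixed atom~$c$ (so $|\Fin(x_a)|=3$), exactly as in the proofs for types~$\Artin{H}_3$ and~$\Artin{H}_4$. This is precisely the step that fails for~$\Artin{F}_4$: the remark immediately following \autoref{type-F4-ess-trans} in the paper records that, for $a\in\{1,4\}$, there is \emph{no} simple element~$x$ with $\Start(x)=\{a\}$ and $|\Fin(x)|=3$, and dually no simple element with $|\Start(x)|=1$ and $\Fin(x)=\Atoms\setminus\{a\}$. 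So the computational search you propose would simply come up empty for $a=1$ and $a=4$, and your chain $x\mid x_a\mid w\mid y$ cannot be completed. The symmetry argument ($1\leftrightarrow 4$, $2\leftrightarrow 3$) does not rescue this, since it only constrains which $c$ to try, not whether the required elements exist.

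The paper's actual argument uses a different bridging scheme, tailored to this obstruction. It constructs \emph{eight} elements rather than four: for each atom~$a$, an $x_a$ with $\Start(x_a)=\{a\}$ and the smaller finishing set $\Fin(x_a)=\{1,3\}$, and also an $x_{\bar a}$ with $\Start(x_{\bar a})=\{1,3\}$ and $\Fin(x_{\bar a})=\Atoms\setminus\{a\}$. Given $x,y\in\pSimples$, one picks $a\in\Fin(x)$ and $b\notin\Start(y)$ and checks via \autoref{Artin-normal-form} that $x\mid x_a\mid x_{\bar b}\mid y$ is in normal form: the middle link $x_a\mid x_{\bar b}$ works because $\Fin(x_a)=\{1,3\}=\Start(x_{\bar b})$. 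Note that \autoref{connecting-atoms} plays no role in the paper's $\Artin{F}_4$ proof, precisely because it can only produce elements with a singleton starting set or a co-singleton finishing set, neither of which can meet the $\{1,3\}$ interface. To repair your proposal you would need to replace the role of $w$ (and of \autoref{connecting-atoms}) with explicit elements having starting set $\{1,3\}$ and finishing set $\Atoms\setminus\{b\}$, i.e.\ adopt the paper's two-family construction.
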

\begin{proof}
  By \autoref{Artin-ProperSimplesEssential}, one has $\Ess=\pSimples\neq\emptyset$.
  By \autoref{Artin-normal-form}, it suffices to construct for every atom $a\in\Atoms$ an element $x_a \in \pSimples$ such that $\Start(x_a) = \{a\}$ and $\Fin(x_a) = \{1,3\}$ as well as an element $x_{\bar{a}}\in\pSimples$ such that $\Start(x_{\bar{a}}) = \{1,3\}$ and $\Fin(x_{\bar{a}}) = \Atoms \setminus \{a\}$.

  By a direct computation in the Coxeter group of type $\Artin{F}_4$, one readily verifies that the elements
  \[\begin{array}{l@{\hspace{15pt}}l@{\hspace{15pt}}l@{\hspace{15pt}}l}
    x_1 = 1232143 &
    x_2 = 213 &
    x_3 = 3213 &
    x_4 = 43213 \\[1ex]
    x_{\bar{1}} = 132132343234 &
    x_{\bar{2}} = 1321343 &
    x_{\bar{3}} = 13214 &
    x_{\bar{4}} = 12321324321323
  \end{array}\]
  are simple and have the required starting and finishing sets.
\end{proof}

\begin{remark}
Calculating powers of the adjacency matrix, it can be shown that the diameter of $\Acceptor_{\Artin{F}_4}$ is~3, so the statement of \autoref{type-F4-ess-trans} cannot be sharpened.

We note that, for $a\in\{1,4\}$, there exists no simple element~$x$ satisfying $\Start(x) = \{a\}$ and $|\Fin(x)| = 3$, and there exists no simple element~$x$ satisfying $|\Start(x)| = 1$ and $\Fin(x) = \Atoms\setminus\{a\}$.
Hence, it is not possible to argue as in the proofs of \autoref{type-H3-ess-trans} and \autoref{type-H4-ess-trans}.
\end{remark}

\begin{proposition} \label{type-I2-ess-trans}
  Suppose that $M$ is the Artin monoid of type $\Artin{I}_2(p)$, with $p \ge 3$.
  \[
    \begin{xy}
      0;<2em,0em>:<0em,2em>::
      (0,0)*+{1}="1";
      (1,0)*+{2}="2";
      {\ar@{-}^{p}  "1";"2"};
    \end{xy}
  \]
  Then $M$ is essentially 2-transitive.
\end{proposition}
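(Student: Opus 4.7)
The plan is as follows. The monoid $\Artin{I}_2(p)$ has only two atoms, and its set $\pSimples$ of proper simple elements consists precisely of the alternating words $\langle 1,2\rangle^k$ and $\langle 2,1\rangle^k$ for $1\le k\le p-1$, each of which has a singleton starting set (its first generator) and a singleton finishing set (its last generator). By \autoref{Artin-ProperSimplesEssential} one has $\Ess=\pSimples\neq\emptyset$, so it suffices to show that any pair $x,y\in\pSimples$ can be joined by a normal form word in~$\calL_\Ess$ of length at most~$3$.

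First I would split into two cases according to whether $\Fin(x)$ equals $\Start(y)$. If $\Fin(x)=\Start(y)$, then $\Fin(x)\supseteq\Start(y)$ holds trivially, and \autoref{Artin-normal-form} provides the length-$2$ witness $x|y$. Otherwise $\Fin(x)=\{a\}$ and $\Start(y)=\{b\}$ with $a\neq b$, and I would interpose the element $z=ab$; since $\Start(z)=\{a\}$ and $\Fin(z)=\{b\}$, two applications of \autoref{Artin-normal-form} show that $x|z|y$ is in normal form, yielding a length-$3$ witness.

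The only non-tautological point is to verify that $z=ab$ is genuinely a proper simple element, and this is where the hypothesis $p\ge 3$ enters: the Garside element of $\Artin{I}_2(p)$ is the alternating word $\langle a,b\rangle^p$ of length~$p$, so the length-$2$ word $z=ab$ is a proper alternating prefix of~$\Delta$ whenever $p\ge 3$. I do not anticipate any substantive obstacle beyond this observation; with only two atoms, the combinatorics of starting and finishing sets is trivial enough that none of the ad hoc case constructions used for the exceptional types are needed.
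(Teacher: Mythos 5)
Your proof is correct and follows essentially the same approach as the paper: both reduce the transitivity check to comparing singleton starting and finishing sets via \autoref{Artin-normal-form}, interposing a length-two alternating element when $\Fin(x)\neq\Start(y)$. The paper tabulates this over the four types of proper simple element, whereas you organise the same observation into two cases; the content is identical.
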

\begin{proof}
  We have $p\ge3$ by assumption, so $12$ and $21$ are distinct proper simple elements.  The proper simple elements fall into one of four types:
  \[\begin{array}{@{}l@{\hspace{1.7ex}}ll@{\hspace{1cm}}l@{\hspace{1.7ex}}ll@{}}
   \textup{(a)} & 2(12)^k=(21)^k2 & (2k+1<p)
     &
   \textup{(b)} & 1(21)^k2 & (2k+2<p)
     \\[1ex]
   \textup{(c)} & 2(12)^k1 & (2k+2<p)
     &
   \textup{(d)} & 1(21)^k =(12)^k1 & (2k+1<p)
  \end{array}\]

  \noindent For $t\in\{\textup{a},\textup{b},\textup{c},\textup{d}\}$, let $t_1$ and $t_2$ denote two arbitrary simple elements of type~$(t)$.  We then have\vspace{-1ex}
  \[\begin{array}{l@{\qquad}l@{\qquad}l@{\qquad}l}
  \textup{a}_1|\textup{a}_2
      & \textup{a}_1|21|\textup{b}_2
      & \textup{a}_1|\textup{c}_2
      & \textup{a}_1|21|\textup{d}_2 \\[1ex]
  \textup{b}_1|\textup{a}_2
      & \textup{b}_1|21|\textup{b}_2
      & \textup{b}_1|\textup{c}_2
      & \textup{b}_1|21|\textup{d}_2 \\[1ex]
  \textup{c}_1|12|\textup{a}_2
      & \textup{c}_1|\textup{b}_2
      & \textup{c}_1|12|\textup{c}_2
      & \textup{c}_1|\textup{d}_2    \\[1ex]
  \textup{d}_1|12|\textup{a}_2
      & \textup{d}_1|\textup{b}_2
      & \textup{d}_1|12|\textup{c}_2
      & \textup{d}_1|\textup{d}_2
  \end{array}\]

  \noindent and thus $M$ is $2$-transitive.
\end{proof}

Combining Propositions~\ref{type-A-ess-trans} to \ref{type-I2-ess-trans} with the classification of Artin monoids of spherical type from~\autoref{T:ArtinClassification} and~\autoref{reducible-non-transitive}, we have the following theorem:

\begin{theorem}\label{Artin-EssentiallyTransitive}
  Let $M$ be an Artin monoid of spherical type with more than one atom.

  The language of normal forms in~$M$ is essentially transitive if and only if~$M$ is irreducible.  Moreover,
  if the language is essentially transitive then it is essentially $5$-transitive.
\end{theorem}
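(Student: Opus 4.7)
The plan is to deduce the theorem directly from the combined strength of the preceding case analyses and the classification of irreducible Artin monoids of spherical type (\autoref{T:ArtinClassification}), so no genuinely new argument is required; the proof is bookkeeping.

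For the forward implication, I would argue by contraposition: if $M$ is reducible, then $M$ decomposes as a non-trivial direct product of Artin monoids of spherical type, and direct products are a special case of the \ZS{} construction (\autoref{D:ZappaSzepProduct}). Hence by \autoref{reducible-non-transitive}, which in turn rests on \autoref{product-not-transitive}, the language of normal forms in~$M$ is not essentially transitive. (One small point to confirm is that the hypothesis $|\Atoms|>1$ rules out the trivial case where one factor would have no proper simple elements.)

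For the converse, I would invoke the classification in \autoref{T:ArtinClassification}: every irreducible Artin monoid of spherical type belongs to one of the families $\Artin{A}_n$, $\Artin{B}_n$, $\Artin{D}_n$, $\Artin{E}_n$ ($n\in\{6,7,8\}$), $\Artin{F}_4$, $\Artin{H}_3$, $\Artin{H}_4$, or $\Artin{I}_2(p)$. Propositions \ref{type-A-ess-trans}, \ref{type-B-ess-trans}, \ref{type-D-ess-trans}, \ref{type-E6-ess-trans}, \ref{type-E7-ess-trans}, \ref{type-E8-ess-trans}, \ref{type-F4-ess-trans}, \ref{type-H3-ess-trans}, \ref{type-H4-ess-trans} and \ref{type-I2-ess-trans} establish essential $k$-transitivity in each of these cases with $k\in\{2,3,4,5\}$. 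Taking the maximum across the list yields essential $5$-transitivity, which is the sharp uniform bound advertised in the statement.

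There is no real obstacle here: the only thing to watch is that the statement is formulated for \emph{all} irreducible Artin monoids of spherical type with more than one atom, including the small-rank coincidences $\Artin{D}_3 = \Artin{A}_3$, $\Artin{I}_2(3) = \Artin{A}_2$, and $\Artin{I}_2(4) = \Artin{B}_2$; these are covered either by the $\Artin{A}$-case or the $\Artin{I}_2$-case and pose no difficulty. The case $\Artin{A}_1$ is explicitly excluded by the assumption $|\Atoms|>1$, since $\Artin{A}_1$ has no proper simple elements and thus $\Ess=\emptyset$, so essential transitivity is undefined. Assembling these observations into a single two-sentence proof — one sentence invoking \autoref{reducible-non-transitive}, the other invoking the classification together with the list of case propositions — completes the argument.
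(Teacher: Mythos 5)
Your proposal is correct and mirrors the paper exactly: the paper's ``proof'' is precisely the single sentence preceding the theorem, namely ``Combining Propositions~\ref{type-A-ess-trans} to \ref{type-I2-ess-trans} with the classification of Artin monoids of spherical type from~\autoref{T:ArtinClassification} and~\autoref{reducible-non-transitive}, we have the following theorem.'' One small imprecision: for $\Artin{A}_1$ essential transitivity is not \emph{undefined} but simply \emph{false} (since $\Ess=\emptyset$ violates the defining condition), and as $\Artin{A}_1$ is irreducible this is exactly why the hypothesis $|\Atoms|>1$ is needed for the ``irreducible $\Rightarrow$ essentially transitive'' direction.
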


\begin{corollary}\label{C:BoundedExpectedPD-Artin}
Let $M$ be an irreducible Artin monoid of spherical type, let $\nu_{k}$ be the uniform probability measure on $\calL_M^{(k)}$, and let $\mu_\Atoms$ be the uniform probability distribution on the set $\Atoms$ of atoms of $M$.

The expected value $\Expect_{\nu_{k} \times \mu_\Atoms}[\pd]$ of the penetration distance with respect to $\nu_k\times \mu_\Atoms$ is uniformly bounded (that is, bounded independently of $k$).
\end{corollary}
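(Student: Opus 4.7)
The plan is to observe that \autoref{C:BoundedExpectedPD-Artin} is essentially a direct specialisation of the general \autoref{C:BoundedExpectedPD} to the setting of irreducible Artin monoids of spherical type. The general result asserts a uniform bound on $\Expect_{\nu_k\times\mu_\Atoms}[\pd]$ under two structural hypotheses on the Garside monoid $M$: namely, that every proper simple element of $M$ is essential, and that the language $\calL_M$ of normal forms is essentially transitive. So my strategy is simply to verify both hypotheses for the class of monoids in question, and then invoke \autoref{C:BoundedExpectedPD}.

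The first hypothesis, that $\pSimples_M = \Ess_M$, is supplied directly by \autoref{Artin-ProperSimplesEssential}, which was proven at the beginning of this section and applies to any Artin monoid of spherical type (without any irreducibility assumption). The second hypothesis, essential transitivity of $\calL_M$, is exactly the content of \autoref{Artin-EssentiallyTransitive}, which has just been established by combining the case-by-case propositions \autoref{type-A-ess-trans}--\autoref{type-I2-ess-trans} with the classification \autoref{T:ArtinClassification} of irreducible Artin monoids of spherical type; irreducibility of $M$ is used precisely here (cf.\ \autoref{reducible-non-transitive}, which shows that reducible monoids fail essential transitivity).

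The one edge case requiring a comment is $M = \Artin{A}_1 = \NN$, which is formally irreducible but has only a single atom and hence $\pSimples_M = \emptyset$. In that situation $\calL_M^{(k)} = \emptyset$ for every $k\ge 1$, so no probability measure $\nu_k$ is defined and the statement is vacuous; equivalently, we may restrict to the case $|\Atoms|>1$ addressed by \autoref{Artin-EssentiallyTransitive}. With both hypotheses of \autoref{C:BoundedExpectedPD} confirmed, the corollary is immediate. Since the real work has already been done in the preceding sections, I do not foresee any obstacle: the proof is essentially one line, citing \autoref{Artin-ProperSimplesEssential}, \autoref{Artin-EssentiallyTransitive}, and \autoref{C:BoundedExpectedPD}.
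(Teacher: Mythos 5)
Your proof is correct and matches the paper's own proof exactly: both cite \autoref{C:BoundedExpectedPD}, \autoref{Artin-ProperSimplesEssential} and \autoref{Artin-EssentiallyTransitive}. Your observation about the degenerate case $M=\Artin{A}_1=\NN$ (single atom, $\pSimples_M=\emptyset$, so the statement is vacuous) is a sound addition that the paper leaves implicit.
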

\begin{proof}
\autoref{C:BoundedExpectedPD}, \autoref{Artin-ProperSimplesEssential} and \autoref{Artin-EssentiallyTransitive} imply the claim.
\end{proof}

Recall that $\beta_M$ is the exponential growth rate of the regular language~$\calL_M^{(k)}$.

\begin{lemma}\label{Artin-Growth}
If $M$ is an irreducible Artin monoid of spherical type with more than one atom, then one has $\beta_M>1$.
\end{lemma}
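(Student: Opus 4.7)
My plan is to show that the acceptor graph, restricted to the essential elements, is strongly connected and contains a branching vertex, whence the Perron--Frobenius eigenvalue (which equals $\beta_M$) exceeds~$1$. By \autoref{Artin-ProperSimplesEssential} one has $\Ess = \pSimples$, and by \autoref{Artin-EssentiallyTransitive} the subgraph of $\Acceptor_M$ induced on $\pSimples$ is strongly connected; its adjacency matrix $A$ has $\beta_M$ as its Perron--Frobenius eigenvalue.

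The combinatorial key step is to locate a vertex of $A$ with out-degree at least two. Since $M$ is irreducible and has more than one atom, its Coxeter graph has an edge, so there exist distinct atoms $a, b \in \Atoms$ with $m_{a,b} \ge 3$. The element $ab$ then lies in $\pSimples$: it is simple because, $m_{a,b}$ being at least~$3$, no application of the braid relations rewrites $ab$ into a word containing a square; and $\Start(ab) = \{a\}$ because $b \in \Start(ab)$ would force $ab = ba$, contradicting $m_{a,b}\ge3$. By \autoref{Artin-normal-form}, the edges in the acceptor starting at~$a$ go to precisely those $s \in \pSimples$ with $\Start(s) \subseteq \Fin(a) = \{a\}$, so both $s = a$ and $s = ab$ are valid, distinct successors of~$a$.

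To conclude, I would invoke the Collatz--Wielandt bounds: for the adjacency matrix of a strongly connected finite digraph with out-degrees $r_1, \ldots, r_n$, the Perron--Frobenius eigenvalue~$\lambda$ satisfies $\min_i r_i \le \lambda \le \max_i r_i$, with equality only when all $r_i$ coincide. In our setting every $r_i \ge 1$ by strong connectivity, and $r_a \ge 2$ by the previous paragraph; so either all out-degrees are equal to some $d \ge 2$ and $\lambda = d \ge 2$, or $\lambda > \min_i r_i \ge 1$. Either way $\beta_M = \lambda > 1$. The only step that requires genuine care is the verification that $ab$ really is a proper simple element with starting set $\{a\}$; beyond this, the argument is a standard application of Perron--Frobenius theory to a strongly connected graph with a branching vertex, so I do not anticipate any substantial obstacle.
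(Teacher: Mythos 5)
Your argument is correct in spirit and reaches the right conclusion, but it takes a genuinely different route from the paper's. The paper argues by direct counting: essential transitivity gives a cycle $a|s_1|\cdots|s_k|b|t_1|\cdots|t_\ell|a$ in the acceptor, and the self-loop $a|a$ (from \autoref{Artin-normal-form}) then lets one build $2^N$ distinct normal-form words of length $1+N(k+\ell+2)$, forcing exponential growth. You instead invoke Perron--Frobenius theory: \autoref{Artin-ProperSimplesEssential} and \autoref{Artin-EssentiallyTransitive} give a strongly connected acceptor on $\pSimples$ whose spectral radius is $\beta_M$, and a strongly connected digraph with all out-degrees $\ge 1$ and at least one vertex of out-degree $\ge 2$ has $\lambda>1$ by the Collatz--Wielandt bounds. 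Your route is more structural and leans on more machinery; the paper's is more elementary and self-contained. Both buy the same conclusion.

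One step in your write-up is incomplete. To conclude $a|ab$ from \autoref{Artin-normal-form} you need $\Start(ab)\subseteq\Fin(a)=\{a\}$, i.e.\ $\Start(ab)=\{a\}$, but your verification only rules out $b\in\Start(ab)$; you also need to rule out a \emph{third} atom $c$. This is true and not hard: if $c\in\Start(ab)$ with $c\ne a$, then $a\join c\prefix ab$, and $a\join c$ has word-length $m_{a,c}\ge 2$, so $m_{a,c}=2$ and $ac=ca=ab$, whence $c=b$ by cancellation, contradicting $m_{a,b}\ge 3$. Alternatively, you could sidestep the verification entirely by taking the second out-neighbour of $a$ to be the element $x$ produced by \autoref{connecting-atoms} (with $\Start(x)=\{a\}$, $\Fin(x)=\{b\}$, hence $x\ne a$), which the paper has already established exists.
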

\begin{proof}
Consider two atoms~$a\neq b$ of~$M$.  As~$\calL_M$ is essentially transitive, there exist $s_1,\ldots,s_k,t_1,\ldots,t_\ell\in\pSimples$ such that $a|s_1|\cdots|s_k|b|t_1|\cdots t_\ell|a$.  Moreover, we have $a|a|\cdots|a$ by \autoref{Artin-normal-form}.
Thus, one has $\calL_M^{\left(1+N(k+\ell+2)\right)} \ge 2^N$, showing the claim.
\end{proof}

\begin{corollary}\label{C:UnboundedExpectedPD-ZappaSzep-Artin}
Let $M=G\zs H$, where $G$ and $H$ are irreducible Artin monoids of spherical type with more than one atom, let $\nu_{k}$ be the uniform probability measure on~$\calL_M^{(k)}$, and let~$\mu_\Atoms$ be the uniform probability distribution on the set~$\Atoms$ of atoms of~$M$.

The expected value
$\Expect_{\nu_{k} \times \mu_\Atoms}[\pd]$ diverges, that is, $\lim_{k\to\infty} \Expect_{\nu_{k} \times \mu_\Atoms}[\pd] = \infty$.
\end{corollary}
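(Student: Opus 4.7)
The plan is to verify that the hypotheses of \autoref{T:PSeqProduct} are satisfied for $M = G \zs H$ and then invoke that theorem directly. The statement of \autoref{T:PSeqProduct} requires four ingredients: a \ZS{} Garside structure $(M,G,H,\Delta_M,\Delta_G,\Delta_H)$, that every proper simple element of~$H$ be essential, that the language $\calL_H$ be essentially transitive, and that $\beta_G,\beta_H>1$ hold.

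The first ingredient is immediate: since~$G$ and~$H$ are Garside monoids by hypothesis, the \ZS{} product~$G\zs H$ carries a Garside structure with Garside element $\Delta_M=\Delta_G\Delta_H$ (cf.\ \autoref{D:ZS-GarsideStructure}), and so $(M,G,H,\Delta_M,\Delta_G,\Delta_H)$ is a \ZS{} Garside structure. The second and third ingredients follow by applying results of \autoref{S:ArtinMonoids} to~$H$: since~$H$ is an irreducible Artin monoid of spherical type with more than one atom, \autoref{Artin-ProperSimplesEssential} gives $\Ess_H=\pSimples_H$, while \autoref{Artin-EssentiallyTransitive} guarantees that $\calL_H$ is essentially (in fact, $5$-)transitive.

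For the fourth ingredient, note that both~$G$ and~$H$ are irreducible Artin monoids of spherical type with at least two atoms, so \autoref{Artin-Growth} applies to each factor and yields $\beta_G>1$ and $\beta_H>1$. Having verified all hypotheses, the conclusion $\lim_{k\to\infty}\Expect_{\nu_k\times\mu_\Atoms}[\pd]=\infty$ follows directly from \autoref{T:PSeqProduct}.

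There is no substantial obstacle here, as the corollary is essentially a packaging result: all the heavy lifting has already been done in \autoref{T:PSeqProduct} and in the structural results for Artin monoids established in \autoref{S:ArtinMonoids}. The only point to be slightly careful about is the symmetry of the hypotheses --- \autoref{T:PSeqProduct} requires the essential transitivity and essentiality of proper simples only for one of the two factors, and the assumptions on~$G$ and~$H$ are symmetric, so either factor can play the role of~$H$ in applying the theorem.
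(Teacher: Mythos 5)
Your proof is correct and matches the paper's approach exactly: the paper's own proof simply cites \autoref{T:PSeqProduct}, \autoref{Artin-ProperSimplesEssential}, \autoref{Artin-EssentiallyTransitive} and \autoref{Artin-Growth}, which are precisely the ingredients you assemble. Your additional remark about the symmetry of the hypotheses (so that either factor can serve as $H$ in \autoref{T:PSeqProduct}) is a sensible clarification but does not change the substance of the argument.
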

\begin{proof}
The claim follows from \autoref{T:PSeqProduct}, \autoref{Artin-ProperSimplesEssential}, \autoref{Artin-EssentiallyTransitive} and \autoref{Artin-Growth}.
\end{proof}
\medskip

In the terminology of this paper, Dehornoy asked in~\cite[Question~3.13]{DehornoyJCTA07} whether for the braid monoid, that is in the situation $M=\Artin{A}_n$, one has
$\big|\calLbar^{(k)}(s)\big|\in\Theta\big(\big|\calLbar^{(k)}\big|\big)$ for all~$s\in\pSimples$, where
\[
  \calLbar^{(k)}(s) := \calLbar^{(k)} \cap \Simples^*\ldot s
    = \Big\{ s_1\ldot\cdots\ldot s_k\in \calLbar^{(k)} : s_k = s \Big\}
  \;.
\]
The answer is affirmative for all irreducible Artin monoids of spherical type:

\begin{corollary}\label{C:Dehornoy1}
If $M$ is an irreducible Artin monoid of spherical type with more than one atom and $s\in\pSimples$, then one has
$\big|\calL^{(k)}(s)\big|\in\Theta\big(\big|\calL^{(k)}\big|\big)$.
\end{corollary}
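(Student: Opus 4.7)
The plan is to simply combine the general result \autoref{L:RestrictedNF} with the structural properties of irreducible Artin monoids of spherical type established earlier in this section. Concretely, \autoref{L:RestrictedNF} says that $\big|\calL_M^{(k)}(s)\big| \in \Theta\big(\big|\calL_M^{(k)}\big|\big)$ holds for all $s\in\pSimples$ whenever (i) every proper simple element of $M$ is essential and (ii) the language $\calL_M$ is essentially transitive. So the entire task reduces to verifying these two hypotheses for our $M$.

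For hypothesis (i), I would invoke \autoref{Artin-ProperSimplesEssential}, which asserts $\Ess_M = \pSimples_M$ for every Artin monoid of spherical type without further restriction on the graph. For hypothesis (ii), I would invoke \autoref{Artin-EssentiallyTransitive}, which states precisely that the language of normal forms of an Artin monoid of spherical type (with more than one atom) is essentially transitive if and only if the monoid is irreducible; by assumption, $M$ is irreducible, so this applies.

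There is no real obstacle: the hard work has been done upstream in proving that all proper simple elements are essential (\autoref{Artin-ProperSimplesEssential}) and in the case-by-case verification of essential transitivity (Propositions \ref{type-A-ess-trans} through \ref{type-I2-ess-trans}, assembled in \autoref{Artin-EssentiallyTransitive}), together with the general combinatorial estimate \autoref{L:RestrictedNF}. The proof therefore consists of one line, citing these three results. I would write it as:

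\begin{proof}
By \autoref{Artin-ProperSimplesEssential}, every proper simple element of $M$ is essential, and by \autoref{Artin-EssentiallyTransitive}, the language $\calL_M$ is essentially transitive since $M$ is irreducible. The claim now follows from \autoref{L:RestrictedNF}.
\end{proof}
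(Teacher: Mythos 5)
Your proof is correct and cites exactly the same three results (\autoref{L:RestrictedNF}, \autoref{Artin-ProperSimplesEssential}, \autoref{Artin-EssentiallyTransitive}) as the paper's own one-line proof, in essentially the same way. The only difference is that you spell out how the two hypotheses of \autoref{L:RestrictedNF} are met, which the paper leaves implicit.
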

\begin{proof}
The claim follows from \autoref{L:RestrictedNF}, \autoref{Artin-ProperSimplesEssential} and
\autoref{Artin-EssentiallyTransitive}.
\end{proof}

\begin{corollary}\label{C:Dehornoy2}
If $M$ is an irreducible Artin monoid of spherical type and $s\in\pSimples$, then one has
$\big|\calLbar^{(k)}(s)\big|\in\Theta\big(\big|\calLbar^{(k)}\big|\big)$.
\end{corollary}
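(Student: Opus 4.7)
The plan is to reduce to \autoref{C:Dehornoy1} via the standard decomposition of $\calLbar$-words into a prefix of~$\Delta$s followed by an~$\calL$-word. If $M=\Artin{A}_1$ then $\pSimples=\emptyset$ and the statement is vacuous, so I may henceforth assume $|\Atoms|>1$.

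First I would observe that in any normal form word over $\Simples\setminus\{\id\}$, any occurrence of~$\Delta$ forces every preceding letter to equal~$\Delta$: if $s_i|s_{i+1}$ and $s_{i+1}=\Delta$, then $\complement s_i\meet\Delta = \complement s_i$, so $\complement s_i = \id$, i.e.\ $s_i=\Delta$. Inductively, the $\Delta$s in any word of $\calLbar^{(k)}$ form an initial segment. Since a proper simple element $s\neq\Delta$ is required at the terminal position, this yields the disjoint decomposition
\[
   \calLbar^{(k)}(s) \;=\; \bigsqcup_{j=1}^{k} \{\Delta^{k-j}\}\cdot\calL^{(j)}(s) \;,
\]
whence $\bigl|\calLbar^{(k)}(s)\bigr| = \sum_{j=1}^{k}\bigl|\calL^{(j)}(s)\bigr|$, while $\bigl|\calLbar^{(k)}\bigr| = \sum_{j=0}^{k}\bigl|\calL^{(j)}\bigr|$ (as already used in the proof of \autoref{L:GrowthRates}).

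Next I would invoke \autoref{C:Dehornoy1} to obtain constants $c_1,c_2>0$ and $j_0\in\NN$ with $c_1\bigl|\calL^{(j)}\bigr| \le \bigl|\calL^{(j)}(s)\bigr| \le c_2\bigl|\calL^{(j)}\bigr|$ for $j\ge j_0$, and sum these inequalities over $j$. The only care needed is to absorb the finitely many initial terms $j<j_0$ together with the $j=0$ contribution of $\bigl|\calLbar^{(k)}\bigr|$ into the multiplicative constants. This is legitimate precisely because \autoref{Artin-Growth} gives $\beta_M>1$; then \autoref{L:GrowthRates} together with \autoref{L:Sums} shows that both partial sums grow as $\Theta\bigl(k^{q_M}{\beta_M}^{k}\bigr)$, which dominates any bounded perturbation. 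This delivers $\bigl|\calLbar^{(k)}(s)\bigr|\in\Theta\bigl(\bigl|\calLbar^{(k)}\bigr|\bigr)$ as required.

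The main obstacle is purely a bookkeeping one: namely, verifying that the passage from term-by-term $\Theta$-bounds to $\Theta$-bounds on the partial sums is not derailed by sporadic vanishing or subexponential behaviour of individual $\bigl|\calL^{(j)}\bigr|$. As noted above, the geometric growth afforded by $\beta_M>1$ in the irreducible non-trivial case rules out both issues, and the argument collapses to a routine comparison of two geometric-type sums.
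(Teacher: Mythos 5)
Your proof is correct and follows essentially the same route as the paper's: reduce to the disjoint decomposition $\calLbar^{(k)}(s)=\bigsqcup_{j}\Delta^{k-j}\calL^{(j)}(s)$, invoke \autoref{C:Dehornoy1} for the term-by-term comparison, and use $\beta_M>1$ (via \autoref{Artin-Growth} and \autoref{L:Sums}) to transfer the $\Theta$-estimate to the partial sums. The extra detail you give about the $j=0$ term and the finitely many small indices is a harmless bookkeeping elaboration of what the paper leaves implicit.
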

\begin{proof}
The claim is vacuously true if $M$ has only one atom, so we may assume otherwise.
In this case, we have $\big|\calL^{(k)}(s)\big|\in\Theta\big(\big|\calL^{(k)}\big|\big)$ by \autoref{C:Dehornoy1} and $\beta>1$ by \autoref{Artin-Growth}.
The claim then follows from $\calLbar^{(k)}(s) = \bigsqcup_{j=1}^k\Delta^{k-j}\calL^{(j)}(s)$ and
$\calLbar^{(k)} = \bigsqcup_{j=1}^k\Delta^{k-j}\calL^{(j)}$ and \autoref{L:Sums}.
\end{proof}

\bibliographystyle{alpha-sjt}
\bibliography{bibliography}

\bigskip
\noindent
\begin{minipage}[t]{0.52\textwidth}
\noindent\textbf{Volker Gebhardt}\\
\noindent \texttt{v.gebhardt@westernsydney.edu.au}
\end{minipage}
\hfill
\begin{minipage}[t]{0.47\textwidth}
\noindent\textbf{Stephen Tawn}\\
\noindent \texttt{stephen@tawn.co.uk}\\
\noindent \url{http://www.stephentawn.info}
\end{minipage}
\medskip
\begin{center}
Western Sydney University \\
Centre for Research in Mathematics\\
Locked Bag 1797, Penrith NSW 2751, Australia\\
\noindent \url{http://www.westernsydney.edu.au/crm}
\end{center}

\end{document}